\newlength{\rulebreite}
\def\timesover#1#2#3{\ \xymatrix@1@=0pt@M=0pt{ _{#1}&\times&_{#2} \\& ^{#3}&}\ }
\def\otimesover#1#2#3{\ \xymatrix@1@=0pt@M=0pt{ _{#1}&\otimes&_{#2} \\& ^{#3}&}\ }
\theoremstyle{plain}
\newtheorem{thm}{Theorem}
\newtheorem{lem}[thm]{Lemma}
\newtheorem{cor}[thm]{Corollary}
\newtheorem{prop}[thm]{Proposition}
\theoremstyle{definition}
\newtheorem{defn}[thm]{Definition}
\newtheorem{conj}[thm]{Conjecture}
\newtheorem{question}[thm]{Question}
\newtheorem{rmk}[thm]{Remark}
\newtheorem{claim}[thm]{Claim}
\numberwithin{thm}{section}
\numberwithin{equation}{section}
\newcommand{\eq}[2]{\begin{equation}\label{#1}#2 \end{equation}}
\newcommand{\ml}[2]{\begin{multline}\label{#1}#2 \end{multline}}
\newcommand{\ga}[2]{
\begin{gather}\label{#1}#2\end{gather} 
}
\newcommand{\rank}{{\rm rank}}
\newcommand{\Div}{{\rm Div}}
\newcommand{\Hom}{{\rm Hom}}
\newcommand{\im}{{\rm im}}
\newcommand{\Spec}{{\rm Spec \,}}
\newcommand{\Gal}{{\rm Gal}}
\newcommand{\sC}{{\mathcal C}}
\newcommand{\sE}{{\mathcal E}}
\newcommand{\sI}{{\mathcal I}}
\newcommand{\sL}{{\mathcal L}}
\newcommand{\sO}{{\mathcal O}}
\newcommand{\sP}{{\mathcal P}}
\newcommand{\sR}{{\mathcal R}}
\newcommand{\sV}{{\mathcal V}}
\newcommand{\sW}{{\mathcal W}}
\newcommand{\dR}{{\mathcal{R}}}
\newcommand{\dV}{{\mathcal{V}}}
\newcommand{\A}{{\mathbb A}}
\newcommand{\C}{{\mathbb C}}
\newcommand{\F}{{\mathbb F}}
\newcommand{\G}{{\mathbb G}}
\newcommand{\N}{{\mathbb N}}
\renewcommand{\P}{{\mathbb P}}
\newcommand{\Q}{{\mathbb Q}}
\newcommand{\R}{{\mathbb R}}
\newcommand{\Z}{{\mathbb Z}}
\newcommand{\Cu}{{\rm Cu}}
\newcommand{\Sm}{{\rm Sm}}
\newcommand{\GL}{{\rm GL}}
\newcommand{\Ql}{{\bar \Q_\ell}}
\newcommand{\Sw}{{\rm Sw}}
\newcommand{\Dis}{{\rm D}}
\newcommand{\CH}{{\rm CH}}
\def\tilde{\widetilde}
\begin{document}

\title[Finiteness theorem (after Deligne)]{A finiteness theorem for Galois representations of function fields over finite fields  (after Deligne)}
\author{H\'el\`ene Esnault}
\address{
Universit\"at Duisburg-Essen, Mathematik, 45117 Essen, Germany}
\email{esnault@uni-due.de}
\author{Moritz Kerz}
\address{Universit\"at Regensburg, Fakult\"at f\"ur Mathematik, 93040
Regensburg, Germany}
\email{moritz.kerz@mathematik.uni-regensburg.de}
\date{ September 13, 2012}
\thanks{The first author is supported by  the SFB/TR45 and
the ERC Advanced Grant 226257, the second author by the DFG Emmy Noether-Nachwuchsgruppe ``Arithmetik \"uber endlich erzeugten K\"orpern''}
\begin{abstract}

We give a detailed account of Deligne's  letter  \cite{DelFinitude} to  Drinfeld dated 
June 18, 2011, in which
 he shows that 
 there are finitely many    irreducible 
  lisse $\bar \Q_\ell$-sheaves with bounded  ramification, up to isomorphism
  and up to twist, on a  smooth variety defined over a finite field. The proof relies on
Lafforgue's Langlands correspondence over curves \cite{Laf}. In addition, Deligne shows
the existence of affine moduli of finite type over $\mathbb{Q}$.
A corollary of Deligne's finiteness theorem is the existence of a number field which contains all traces of the Frobenii at closed points, 
which was the main result of  \cite{DelNumberField}  and which answers positively  his own conjecture
   \cite[Conj.~1.2.10~(ii)]{WeilII}.

\end{abstract}
\maketitle

\section{Introduction } \label{intro}
\noindent
In Weil II   \cite[Conj.~1.2.10]{WeilII} Deligne  conjectured that if $X$ is a normal connected scheme of finite type over a finite field of characteristic $p$, and $V$ is an irreducible  lisse 
$\bar{\Q}_\ell$-sheaf of rank $r$,  with finite determinant, then 
\begin{itemize}
 \item[(i)] $V$ has weight $0$, 
\item[(ii)] there is a number field $E(V)\subset \bar{ \Q}_\ell$
containing all the coefficients of the local characteristic  polynomials ${\rm
  det}(1-tF_x|_{V_x})$, where $x$ runs through the closed points of $X$ and $F_x$ is the
geometric Frobenius at the point $x$, 
\item[(iii)] $V$ admits $\ell'$-companions for all prime
numbers $\ell'\neq p$.  
\end{itemize}
As an application of his Langlands correspondence for ${\rm GL}_r$, 
Lafforgue \cite{Laf} proved  (i), (ii), (iii) for $X$ a smooth curve, out of which one
deduces (i) in general. Using Lafforgue's results, Deligne showed (ii) in
\cite{DelNumberField}. Using (ii) and ideas of Wiesend, Drinfeld \cite{Drinfeld} showed
(iii) assuming in addition $X$ to be smooth. A slightly more elementary variant of
Deligne's argument for (ii) was given in \cite{EK}.

\medskip

Those conjectures were formulated with the hope that a more motivic statement could be
true,  which would say that those lisse sheaves come from geometry.  On the other hand,
over smooth varieties over the field of complex numbers, Deligne  in \cite{DelMos} showed
finiteness of  $\Q$-summands of polarized variations of pure Hodge structures over $\Z$ of bounded rank, a theorem
which, in weight one, is due to Faltings \cite{Fa}. Those are always regular
singular,
while lisse $\bar{\Q}_\ell$-sheaves are not necessarily tame. However,  any lisse sheaf has
bounded ramification (see  Proposition~\ref{prop.rambound} for details). Furthermore, one may  twist a lisse  $\bar{\Q}_\ell$-sheaf by a character coming from the ground field. Thus
it is natural to expect:

\begin{thm}[Deligne]
There are only finitely many irreducible lisse $\bar{\Q}_\ell$-sheaves of given rank up to twist on $X$ with
suitably bounded ramification at infinity.
\end{thm}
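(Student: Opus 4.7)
The plan is to reduce the statement to the case of a smooth curve, where it follows from Lafforgue's Langlands correspondence combined with the classical finiteness of cuspidal automorphic representations of $\GL_r$ with bounded conductor, and then to lift the result from a sufficiently rich family of curves in $X$ back to $X$ itself. Concretely, I would fix a good compactification $X \inj \bar X$ whose complement $D = \bar X \setminus X$ is a simple normal crossings divisor, and formalize ``bounded ramification'' by imposing a uniform bound on the Swan conductors of $V$ along the components of $D$, in the sense of Proposition~\ref{prop.rambound}.

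For the curve case $X = C$ over $\F_q$ with function field $K$, Lafforgue's theorem identifies irreducible rank $r$ lisse $\bar{\Q}_\ell$-sheaves on $C$ with cuspidal automorphic representations of $\GL_r(\A_K)$, the local Swan bound at each missing closed point translating into a bound on the local conductor. A classical reduction-theoretic argument (going back to Harder) shows that for fixed rank, fixed central character type, and fixed conductor divisor, only finitely many cuspidal automorphic representations of $\GL_r(\A_K)$ exist up to unramified twist by characters of the idele class group factoring through $\Gal(\bar{\F}_q/\F_q)$. Transporting this finiteness through Lafforgue's correspondence settles the theorem on curves.

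To pass from curves to $X$ I would choose, via a Bertini-type/space-filling-curve construction in positive characteristic, finitely many smooth curves $C_1,\dots,C_n \subset X$ meeting $D$ transversally such that the induced map $\coprod_i \pi_1(C_i)\to \pi_1(X)$ detects isomorphism classes of semisimple representations: by a Chebotarev density argument, one can arrange that the geometric Frobenii at closed points of the $C_i$ are dense enough in $\pi_1(X)$ to force $V$ to be determined by the family $\{V|_{C_i}\}$. Since the Swan bound on $V$ along $D$ gives a uniform Swan bound for each $V|_{C_i}$ at its boundary points, the curve case yields only finitely many possibilities for each restriction up to twist, and hence only finitely many possibilities for the tuple up to simultaneous twist.

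The main obstacle lies in making the ``up to twist'' statement compatible globally on $X$. Finiteness on each $C_i$ holds only modulo a twist by a character factoring through $\Gal(\bar{\F}_q/\F_q)$, and a priori the admissible twist on $C_i$ and on $C_j$ need not glue to a single character pulled back from the ground field of $X$. The remedy is to rigidify before invoking the curve case --- for instance by fixing the determinant, or the value of $V$ at a single $\F_q$-rational point --- so that the remaining ambiguity is absorbed into one global Frobenius twist. A secondary technical point is producing $C_1,\dots,C_n$ with the required $\pi_1$-surjectivity, transversality to $D$, and control of the Swan conductor after restriction; this is exactly the kind of space-filling-curve input exploited by Wiesend and Drinfeld in the proof of statement (iii), and I expect the same circle of ideas to suffice here.
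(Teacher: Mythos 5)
Your reduction to curves via Lafforgue and automorphic finiteness matches the paper's starting point (Section~\ref{sec.prdim1}), but the passage from curves to $X$ has two genuine gaps, and the central new idea of Deligne's proof is absent from your sketch.

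\textbf{Gap 1: detecting $V$ by finitely many curves.} You invoke a ``Chebotarev density argument'' to claim that finitely many well-chosen curves $C_1,\dots,C_n$ detect all semisimple representations of bounded rank. Chebotarev only gives you Proposition~\ref{bas.charpoly}, namely that the restrictions to \emph{all} curves, or equivalently the full collection of Frobenius characteristic polynomials, determine $V$. No density argument produces a single finite set of curves (or closed points) that suffices for all $V$ of bounded rank and bounded ramification: the monodromy image of $\pi_1(X)$ in $\GL_r$ can have arbitrarily large finite quotients even with a fixed Swan bound, so no finite set of Frobenii is dense in all the relevant images. What actually makes the finite reduction work is Deligne's key Theorem~\ref{frob.thm}, which bounds the degree $N$ of closed points needed in terms of the genus of $\bar X$, the degree of $D$, and $r$; its proof uses the Langlands correspondence (to reduce to weight $0$) and the quasi-orthogonality estimates from Weil II, not just Chebotarev.

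\textbf{Gap 2: decomposable restrictions.} Even granting Gap~1, the restriction $V|_{C_i}$ of an irreducible $V$ on $X$ need not be irreducible on $C_i$. If $V|_{C_i} \cong W_1\oplus\cdots\oplus W_m$ with $m\ge 2$, then the curve finiteness theorem gives finitely many possibilities for each $W_j$ \emph{up to its own twist}, i.e.\ an $m$-parameter family $(\chi_1 W_1^0, \dots, \chi_m W_m^0)$ for independent $\chi_j\in\dR_1(\F_q)$. This is not finite up to a single simultaneous twist, so your claim ``only finitely many possibilities for each restriction up to twist, and hence only finitely many possibilities for the tuple up to simultaneous twist'' fails. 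Fixing $\det(V)$ does not repair this: it only imposes one linear relation on the $m$ twist parameters, leaving an $(m-1)$-dimensional family.

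\textbf{The missing idea.} Deligne handles Gap~2 by turning the set of all tuples of restrictions (the 2-skeleton sheaves $\dV_r(X,D)$) into the $\Ql$-points of an affine moduli scheme $L_r(X,D)$ of \emph{finite type} over $\Q$ (Theorem~\ref{thm.moduli}); Theorem~\ref{frob.thm} together with an exhaustive system of curves is exactly what forces the finite-type property. Theorem~\ref{irrcpt} then identifies the irreducible components of $L_r(X,D)\otimes\bar\Q$ with the twist families $\{\kappa(\chi_1 V_1\oplus\cdots\oplus\chi_m V_m)\}\cong\G_m^m$ of direct sums of irreducible 2-skeleton sheaves, so that irreducibles up to twist correspond bijectively to the \emph{one-dimensional} components (Corollary~\ref{cor.irr}). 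Finiteness is then just the fact that a scheme of finite type has finitely many irreducible components. Your proposal has no analogue of this moduli construction, which is precisely how the higher-dimensional twist families are organized and counted; without it, the ``count up to twist'' step has no content once restrictions decompose. (It also explains why the paper proves the stronger Theorem~\ref{thm.finite2} for 2-skeleton sheaves, which a pick-finitely-many-curves-and-count argument cannot even formulate.)
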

 Deligne shows this theorem in 
\cite{DelFinitude} by extending his arguments from
\cite{DelNumberField}. A precise formulation is given in Theorem~\ref{thm.finite1} based
on the
ramification theory explained in Section~\ref{sec.gloram}.

Our aim in this note is to give a detailed account of Deligne's proof of this finiteness
theorem for lisse $\Ql$-sheaves and consequently of his proof of (ii). For some remarks on the
difference between our method  and Deligne's original argument for
proving (ii) in
\cite{DelNumberField} see Section~\ref{sec.coeff}.

\medskip 

In fact Deligne shows a stronger finiteness theorem which comprises finiteness of the
number of what we call {\em 2-skeleton sheaves} \footnote{we thank Lars Kindler for suggesting this terminology} on $X$. A 2-skeleton sheaf consists of an
isomorphism class of a
semi-simple lisse $\Ql$-sheaf on every smooth curve mapping to $X$, which are assumed to be compatible in
a suitable sense. 
These 2-skeleton sheaves were first studied by Drinfeld \cite{Drinfeld}. His main theorem
roughly says that if a 2-skeleton sheaf is tame at infinity along each curve then it comes
from a lisse sheaf on $X$, extending the rank one case treated in \cite{SchSp}, \cite{W}.
Deligne suggests that a more general statement should be true:

\begin{question}
Does any 2-skeleton sheaf with bounded ramification come from a lisse $\Ql$-sheaf on $X$?
\end{question}

For a precise formulation of the question see Question~\ref{fund.quest}. The
answer to this question is not even known for rank one sheaves, in which case the problem has
been suggested already earlier in higher dimensional class field theory. On the other hand
Deligne's finiteness for 2-skeleton sheaves has interesting consequences for relative Chow
groups of $0$-cycles over finite fields, see Section~\ref{sec.finchow}.


\medskip

Some comments on the proof of the finiteness theorem:
Deligne uses in a crucial way his key theorem \cite[Prop.~2.5]{DelNumberField} on curves asserting that
a semi-simple lisse $\bar{\Q}_\ell$-sheaf is uniquely determined by its 
characteristic polynomials of the Frobenii at all closed points of some explicitly bounded
degree, see Theorem~\ref{frob.thm}.
This enables him to construct a coarse moduli space of 2-skeleton sheaves $L_r(X,D)$  as an affine
scheme of finite type over $\Q$, such that its $\bar{\Q}_\ell$-points correspond to the
2-skeleton sheaves of rank $r$ and bounded ramification by the given divisor $D$ at infinity.
 
We simplify Deligne's construction of the moduli space slightly. Our method yields
less information on the resulting moduli, yet it is enough to deduce the finiteness
theorem. In fact finiteness is seen by showing that irreducible lisse
$\bar{\Q}_\ell$-sheaves up to twist are in bijection with (some of) the one-dimensional irreducible
components of the moduli space (Corollary~\ref{cor.irr}). 

\medskip

We give some applications of Deligne's finiteness theorem in Section~\ref{appl}.

Firstly, it implies the existence of a number field $E(V)$ as in (ii) above, 
see
Theorem~\ref{mainthmirr}. This number field is in fact stable by an ample hyperplane section
if $X$ is projective, see Proposition~\ref{hyperplane}.

 Secondly, as mentioned above the degree zero part of the relative Chow group of
$0$-cycles with  bounded modulus is finite (Theorem~\ref{CH}).

\medskip

Deligne addresses the question of the number of irreducible lisse $\bar{\Q}_\ell$-sheaves with bounded ramification. In \cite{DF} some concrete examples on the projective line minus a divisor of degree $\le 4$ are computed.
In Section~\ref{conjecture} we formulate Deligne's  qualitative conjecture. This formulation rests on emails he sent us and on his lecture in June 2012 in Orsay on the occasion of the Laumon conference.

\medskip

{\it Acknowledgment:} 
Our note gives an account of the $9$  dense pages  written by Deligne to Drinfeld
\cite{DelFinitude}. They rely on \cite{DelNumberField} and \cite{Drinfeld} and contain a completely new idea of great beauty, to the effect of  showing  finiteness by constructing moduli of finite type and 
 equating the classes of the sheaves one wants to count with some of
the irreducible components.
 We thank Pierre Deligne for his willingness to read our note and
for his many enlightening comments.

 Parts of the present note are taken from  our seminar note \cite{EK}. They grew
out of discussions at the Forschungsseminar at Essen
 during summer 2011. We thank all participants of the seminar.

 We thank Ng\^o Bao Ch\^au for a careful reading of our article and several comments which contributed to improve its presentation, we thank him and and Ph\`ung H\^o Hai for giving
us the possibility to publish this note on the occasion of the first VIASM
Annual Meeting.

\section{The finiteness theorem and some consequences} \label{sec.intro}

\subsection{Deligne's finiteness theorem for sheaves} \label{s1.1}

We begin by formulating a version of Deligne's finiteness theorem for $\ell$-adic Galois
representations of functions fields. Later in this section we
introduce the  notion of a 2-skeleton  $\ell$-adic representation, which is necessary in
order to state a  stronger form of Deligne's finiteness result. 

\medskip

Let $\Sm_{\F_q}$ be the category of smooth separated schemes  $X/\F_q$ of finite type over the finite field $\F_q$. 
We fix once for all an algebraic closure $\F\supset \F_q$.
To
$X\in \Sm_{\F_q}$ connected one associates functorially the {\em Weil group} $W(X)$ \cite[~1.1.7 ]{WeilII},  a
topological group, well-defined up to an inner automorphism by $\pi_1(X\otimes_{\F_q} \F )$ when $X$ is geometrically  connected  over $\F_q$. If so, 
then it
sits in an exact sequence
\[
0\to \pi_1(X \otimes_{\F_q} \F ) \to  W(X) \to  W(\F_q)  \to 0.
\]
 There is a canonical identification $W(\F_q)=\Z$.

We fix a prime number $\ell$ with $(\ell,q)=1$.
Let $\dR_r(X)$ be the set of lisse $\bar \Q_\ell$-Weil sheaves on $X$ of dimension $r$ up to isomorphism and up to semi-simplification. 
For $X$ connected,
a lisse $\bar \Q_\ell$-Weil sheaf on $X$ is the same as a continuous representations
$W(X) \to {\rm GL}_r(\bar \Q_\ell )$. As we do not want to talk about a topology on $\Ql$
we define the latter continuous representations ad hoc as the homomorphisms which factor through a continuous homomorphism $W(X) \to {\rm GL}_r( E )$ for some finite
extension $E$ of $\Q_\ell$, see \cite[(1.1.6)]{WeilII}.

The weaker form of the finiteness theorem says that the number of  classes of irreducible sheaves in $\dR_r(X)$ with bounded wild
ramification is finite up to twist.
Let us give some more details. Let $X\subset \bar X$ be a normal compactification of the
connected scheme $X$
such that $\bar X \setminus X$ is the support of an effective Cartier divisor on $\bar X$. 
Let $D\in \Div^+( \bar X)$ be an effective Cartier divisor with support in $\bar X
\setminus X$. In Section~\ref{sec.gloram} we will define a subset $\dR_r(X,D)$ of representations whose
Swan conductor along any smooth curve mapping to $\bar X$  is bounded by the pullback of
$D$ to the completed curve. We show that for
any $V\in \dR_r(X)$ there is a divisor $D$ with $V\in \dR_r(X,D)$, see Proposition~\ref{prop.rambound}.

For $V\in \dR_r(X,D)$ we have the notion of twist $\chi \cdot V$ by an element $\chi\in \dR_1(\F_q)$.  

\begin{thm}[Deligne] \label{thm.finite1} 
 Let $X\in \Sm_{\F_q}$ be  connected and $D\in \Div^+(\bar X)$ be an effective Cartier divisor with 
  support in $\bar X \setminus X$.  The 
 set of irreducible sheaves $V\in \dR_r(X,D)$ is finite up to twist by elements of $\dR_1(\F_q)$. 
\end{thm}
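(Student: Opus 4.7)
The plan is to embed the set of twist-equivalence classes of irreducible sheaves in $\dR_r(X,D)$ into the set of one-dimensional irreducible components of an affine scheme of finite type over $\Q$, and then to conclude by noetherianity.

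First I would pass from sheaves on $X$ to their \emph{2-skeleton data}: to $V\in \dR_r(X,D)$ one attaches the collection of isomorphism classes of the semi-simplified restrictions $f^*V$ for smooth curves $f\colon C\to X$. By Deligne's key theorem (Theorem~\ref{frob.thm}, \cite[Prop.~2.5]{DelNumberField}), each $f^*V$ is uniquely determined by the characteristic polynomials of the geometric Frobenii at the closed points of $C$ of some explicit bounded degree; the bound depends only on $r$, on invariants of $C$, and on the Swan conductor of $f^*V$, which by hypothesis is controlled by the pullback of $D$ to the smooth compactification of $C$.

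Second I would assemble these data into an affine $\Q$-scheme $L_r(X,D)$ of finite type whose $\Ql$-points are 2-skeleton sheaves of rank $r$ with ramification bounded by $D$. Concretely one fixes a finite family of smooth test curves $\{f_i\colon C_i\to X\}$ sufficient to separate 2-skeleton data, and on each $C_i$ one uses the finite list of closed points of bounded degree supplied by the key theorem. The coordinates of $L_r(X,D)$ are the coefficients of the Frobenius characteristic polynomials at these points; the defining equations are Weil-type bounds on these coefficients (algebraicity, via Lafforgue's purity on curves) together with the compatibility relations forced by the test curves meeting at common points. This presents $L_r(X,D)$ as a closed subscheme of an affine space over $\Q$ of finite type, and every $V\in \dR_r(X,D)$ yields a $\Ql$-point.

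Third I would bring in the twist action. The group $\dR_1(\F_q)$ is identified with $\Ql^*$ via $\chi\mapsto \chi(1)$ under $W(\F_q)=\Z$, and twisting by $\chi$ with $\chi(1)=\lambda$ multiplies the coefficient of $t^i$ in the Frobenius characteristic polynomial at a closed point of degree $d$ by $\lambda^{id}$; this defines an algebraic action of $\G_m$ over $\Q$ on $L_r(X,D)$. The stabilizer of an irreducible $V$ is finite, since $\chi\otimes V\cong V$ forces $\chi^r=1$ on taking determinants. Hence the $\G_m$-orbit of $[V]$ is one-dimensional, and its Zariski closure contributes a one-dimensional irreducible component of $L_r(X,D)$, with distinct twist classes yielding distinct components (this is Corollary~\ref{cor.irr}). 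Since a $\Q$-scheme of finite type has only finitely many irreducible components, the set of twist classes of irreducible $V\in \dR_r(X,D)$ must be finite. The main obstacle I expect lies in the second step, namely realizing $L_r(X,D)$ as a scheme \emph{of finite type} rather than a pro-scheme: one must show that finitely many test curves with finitely many test points already separate 2-skeleton data, and that the global ramification bound $D$ on $\bar X$ translates to a uniform Swan conductor bound along all such curves so that Theorem~\ref{frob.thm} can be applied with uniform parameters. This uniformity is the heart of Deligne's construction of the moduli.
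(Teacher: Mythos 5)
Your high-level strategy matches Deligne's: embed twist classes of irreducibles into one-dimensional irreducible components of an affine $\Q$-scheme of finite type $L_r(X,D)$, and conclude by noetherianity. You also correctly identify the key input (Theorem~\ref{frob.thm}) and the key output (Corollary~\ref{cor.irr}), and you rightly flag that the hard part is proving finite type.

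However, the construction you propose in your second step would not work as stated, and it differs from what the paper does. You suggest cutting out $L_r(X,D)$ inside an affine space by "Weil-type bounds on these coefficients" and "compatibility relations"; but Weil bounds are archimedean constraints on absolute values, not polynomial equations over $\Q$, so they do not define a closed subscheme, and it is not clear how to algebraize them. More fundamentally, the claim that one can "fix a finite family of smooth test curves sufficient to separate 2-skeleton data" is precisely what needs to be proved, not assumed: there is no a priori bound on the complexity of the curves one must test. The paper's actual construction is quite different. In the curve case, after proving Theorem~\ref{thm.finite1} in dimension one via the automorphic side (Section~\ref{sec.prdim1}), one observes that a sheaf in $\dR_r(C,D)$ has only finitely many twist classes of decompositions $V_1\oplus\cdots\oplus V_n$ into irreducibles, and one takes $L_r(C,D)$ to be the union over these finitely many classes of the reduced images of the finite morphisms $\G_m^n\to\sL_r(C)$ given by $(\chi_i)\mapsto\kappa(\oplus\chi_i\cdot V_i)$ (Lemma~\ref{lem.finitemor}); no defining equations are written down. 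In higher dimension, $L_r(X,D)$ is defined as a fiber product over all curves $C\in\Cu(X)$ (hence a priori only pro-finite-type), and finite type is established by choosing an exhaustive system of curves $(C_n)$ with linearly growing complexity, using Theorem~\ref{frob.thm} to show the transition maps $L_r(C_{n+1},D_{n+1})\to L_r(C_n,D_n)$ become injective on $\Ql$-points, and invoking the stabilization result Proposition~\ref{proinj}. So the tower-stabilization argument, not a choice of finite test family, is what supplies the finiteness you acknowledge as "the heart of Deligne's construction." Your step three is essentially correct, though note that the $\G_m$-orbit of $\kappa(V)$ is already closed (being the image of a finite morphism), so no Zariski closure is needed; that it is an entire irreducible component, and that no component carries two distinct twist classes, is the content of Theorem~\ref{irrcpt}, which rests on the weight decomposition of Corollary~\ref{corwdec}.
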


In particular the theorem implies that for any integer $N>0$ there are only finitely many  irreducible $V \in
\dR_r(X,D)$ with $\det(V)^{\otimes N} =1$. 
 Theorem~\ref{thm.finite1} is a consequence of
the stronger Finiteness Theorem~\ref{thm.finite2}. 


\begin{rmk}\label{weil.etale}
 Any irreducible lisse Weil sheaf on $X$ is a twist of an \'etale sheaf, Proposition~\ref{weil.etale.gen}. So the theorem could also be stated  with \'etale sheaves instead of Weil sheaves.

\end{rmk}

\subsection{Existence problem and  finiteness theorem for 2-skeleton sheaves} \label{ss.strong}

By ${\Cu}(X)$ we denote the set of normalizations of closed integral
subschemes of $X$ of dimension one. 

We say that a family $(V_C)_{C\in \Cu(X)}$ with $V_C\in \dR_r(C)$ is {\em compatible} if for  all pairs $(C, 
 C')$ we have 
 \eq{}{V_C|_{(C \times_X C')_{\rm red}  } = V_{C'}|_{(C \times_X C')_{\rm red} }
\in \dR_r( (C \times_X C')_{\rm red}  ) . \notag}
 We write $\dV_r(X)$ for the set of compatible families -- also called
{\em 2-skeleton sheaves}.

It is not difficult to see that the canonical map $\dR_r(X) \to \dV_r(X)$ is injective, Proposition \ref{bas.charpoly}.
One might ask, what   the image of $\dR_r(X)$ in  $\dV_r(X)$ is.

With the notation as above we can also define the set $\dV_r(X,D)$ of 2-skeleton sheaves with bounded wild
ramification, see Definition~\ref{def.boundram}.
Deligne expresses the hope that the following question about existence of $\ell$-adic sheaves  might have a positive answer.

\begin{question}\label{fund.quest}
Is the map
$\dR_r(X,D) \to \dV_r(X,D)$ bijective
for any Cartier divisor $D\in \Div^+(\bar X)$ with support in $\bar X \setminus X$?
\end{question}

To motivate the question one should think of the set of curves $\Cu(X)$ together with the systems of intersections
of curves as the $2$-skeleton of $X$.
To be more precise, the analogy is as follows:
For a $CW$-complex $S$ let $S_{\le d}$ be the union of $i$-cells of $S$ ($i\le d$), i.e.\ its
$d$-skeleton. Assume that $S_{\le 0}$ consists of just one point.

\bigskip

\begin{center}
\renewcommand{\arraystretch}{1.4}
\begin{tabular}[c]{|p{5cm}|p{5cm}|}
\hline  $CW$-complex $S$ (with $S_{\le 0}=\{ *\}  $) &  Variety $X/\F_q$  \\
\hline $1$-sphere $S^1$ with topological fundamental group $\pi_1(S^1)=\Z$  & Finite field
$\F_q$ with Weil group $W(\F_q) = \Z$ \\
\hline  $S^1$-bouquet $S_{\le 1} $ &  Set of closed points $|X|$  \\
\hline $2$-cell in $S$ &  Curve in $\Cu(X)$ \\
\hline Relation in $\pi_1(S)$ coming from $2$-cell & Reciprocity law on curve \\
\hline $2$-skeleton $S_{\le 2}$ & System of curves $\Cu(X)$\\
\hline Local system on $S$ & Lisse $\Ql$-Weil sheaf on $X$\\
\hline
\end{tabular}
\end{center}

\bigskip

In the sense of this analogy, Deligne's Question~\ref{fund.quest} is the analog of the
fact that the fundamental groups of $S$ and $S_{\le 2}$ 
 are the same  \cite[Thm.\ 4.23]{Hat}, except that 
 we consider only the information contained in $\ell$-adic representations, in addition only modulo semi-simplification,  and that
 there
is no analog of wild ramification over $CW$-complexes.


\medskip

For $D=0$ a positive answer to Deligne's question is given by Drinfeld \cite[Thm~2.5]{Drinfeld}. His proof uses a method developed by  
  Wiesend \cite{W} to reduce the problem to Lafforgue's theorem.
 For
$r=1$ and $D=0$ it was first shown by Schmidt--Spiess \cite{SchSp} using motivic cohomology, and later by
Wiesend \cite{W2} using more
elementary methods.

\medskip

The stronger form  of Deligne's finiteness theorem says that Theorem~\ref{thm.finite1} remains true for 2-skeleton sheaves. We
say that a {\it 2-skeleton sheaf } $V\in \dV_r(X)$ {\it on a connected scheme} $X$ is {\it irreducible}  if it cannot be written in the
from $V_1 \oplus V_2$ with $V_i\in \dV_{r_i}(X)$ and  $r_1,r_2>0$.
In   Appendix~\ref{app.B},  Proposition~\ref{propcurveex},  we give a proof of the well known fact that a sheaf $V\in \dR_r(X)$ is irreducible if and only if its image in
$\dV_r(X)$ is irreducible.

 The main result of this note now says:

\begin{thm}[Deligne] \label{thm.finite2}
 Let $X\in \Sm_{\F_q}$ be  connected and $D\in \Div^+(\bar X)$ be an effective  Cartier divisor 
  supported in $\bar X \setminus X$.  The 
 set of irreducible 2-skeleton sheaves $V\in \dV_r(X,D)$ is finite up to twist by elements from $\dR_1(\F_q)$. Its cardinality does not depend on $\ell\neq p$.
\end{thm}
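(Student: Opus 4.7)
The plan is to construct a coarse moduli space $L_r(X,D)$ as an affine $\Q$-scheme of finite type whose $\Ql$-points are in bijection with semi-simple 2-skeleton sheaves in $\dV_r(X,D)$, and then to identify irreducible 2-skeleton sheaves up to twist with certain one-dimensional irreducible components of $L_r(X,D)$. Once this is in place, the theorem is immediate: a scheme of finite type over $\Q$ has only finitely many irreducible components, and the set of irreducible components of a $\Q$-scheme does not depend on the auxiliary prime $\ell$, yielding both the finiteness and the claimed $\ell$-independence.

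The key input is Deligne's bounded-degree rigidity theorem on curves (Theorem~\ref{frob.thm}, i.e.\ \cite[Prop.~2.5]{DelNumberField}): a semi-simple lisse $\Ql$-sheaf of rank $r$ on a smooth curve whose Swan conductor is bounded by a fixed divisor is determined by the characteristic polynomials $\det(1-tF_x\mid V_x)$ at closed points $x$ of degree up to an explicit bound depending only on $r$ and the ramification data. I would use this, applied to a suitable finite collection of curves in $\Cu(X)$ together with their pairwise intersection points, to encode every $V\in\dV_r(X,D)$ by finitely many characteristic polynomials. Weight estimates from Lafforgue's theorem on curves force the coefficients of these polynomials to be algebraic numbers of bounded height and bounded denominator, so they live in a fixed affine $\Q$-scheme of finite type. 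Imposing the polynomial shape of Frobenius polynomials together with the pairwise compatibility at common closed points cuts out the closed subscheme $L_r(X,D)$.

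Next I would analyse the twisting action. An element $\chi\in\dR_1(\F_q)$ is determined by $\alpha=\chi(F)\in\Ql^*$, and at a closed point $x$ the Frobenius eigenvalues of $\chi\cdot V$ are $\alpha^{\deg x}\cdot\lambda$ for $\lambda$ an eigenvalue of $V_x$. This defines an algebraic action of $\G_{m,\Q}$ on $L_r(X,D)$. By Schur's lemma applied on any curve where $V$ pulls back irreducibly, the relation $V\cong\chi\cdot V$ for an irreducible $V$ forces $\alpha$ to be a root of unity of order dividing $r$, so the $\G_m$-orbit through the moduli point $[V]$ is one-dimensional. I would then prove, along the lines of Corollary~\ref{cor.irr}, that the closure of this orbit is in fact a whole irreducible component of $L_r(X,D)$: an irreducible 2-skeleton sheaf cannot be deformed in $L_r(X,D)$ otherwise than by twisting. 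This provides an injection from the set of irreducible sheaves in $\dV_r(X,D)$ modulo twist into the finite set of one-dimensional irreducible components of $L_r(X,D)$.

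The main obstacle I expect is this last step, namely showing that orbit closures of moduli points of irreducible 2-skeleton sheaves exhaust full irreducible components. This is where the irreducibility hypothesis has to be exploited in essential fashion, ruling out higher-dimensional families of deformations, and it is precisely the point at which the simpler construction of $L_r(X,D)$ mentioned in the introduction suffices: one never needs a fine description of the moduli beyond being able to detect its one-dimensional components. Once this is in hand, the finiteness statement drops out from finite-typeness of $L_r(X,D)$ over $\Q$, and the $\ell$-independence of the count follows because the stratification of $L_r(X,D)$ by irreducible components is defined over $\Q$ without any reference to $\ell$.
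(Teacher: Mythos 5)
Your overall strategy matches the paper's: build the affine finite-type $\Q$-scheme $L_r(X,D)$, identify irreducible 2-skeleton sheaves modulo twist with one-dimensional irreducible components of $L_r(X,D)\otimes\bar\Q$, and deduce finiteness and $\ell$-independence. But the step you flag as the obstacle is not a technicality: it is the substance of the theorem, and your sketch leaves it open. Schur's lemma, which you invoke, only computes the \emph{stabilizer} of a moduli point under the $\G_m$-twisting action (a finite group of roots of unity), hence shows the orbit has dimension one. It says nothing that prevents the moduli point $\kappa(V)$ from also lying on a positive-dimensional family of mutually non-isomorphic irreducible 2-skeleton sheaves, in which case the orbit closure would \emph{not} be a whole irreducible component and the argument would collapse. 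The paper's mechanism for ruling this out is the weight decomposition of Corollary~\ref{corwdec}: every $V\in\dV_r(X,D)$ decomposes uniquely as $\bigoplus_{w\in\sW}V_w$ with $\sW=\Ql^\times/(\text{weight-0 numbers})$, and in Theorem~\ref{irrcpt} one shows that for a geometric generic point of an irreducible component, the transcendence degree of the field generated by the coefficients of $\kappa(V)$ over $\bar\Q$ is exactly the number of nonzero summands $V_w$, because those $w$ are algebraically independent. For $V$ irreducible there is a single $w$, forcing any component containing $\kappa(V)$ to be one-dimensional. This is the essential use of purity (hence of Langlands), and it is the argument your proposal is missing.

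A second point: your suggestion that Lafforgue's purity bounds the ``height and denominator'' of Frobenius-polynomial coefficients and thereby forces finite type over $\Q$ does not work. Purity shows the Frobenius eigenvalues are $q$-Weil numbers of weight $0$, but there are infinitely many such numbers of any bounded archimedean and $p$-adic size, spread over number fields of unbounded degree, so no direct height estimate cuts out a finite-type scheme. The finite-type and $\Q$-rationality properties come from different inputs: Theorem~\ref{frob.thm} (finitely many characteristic polynomials already determine $V$, yielding finite type via an exhaustive tower of curves and Proposition~\ref{proinj}), the curve case of the finiteness theorem proved via automorphic forms (to start the induction), and the companion result Corollary~\ref{vir.compan} combined with Galois descent (Proposition~\ref{descent}) to descend from $\Ql$ to $\Q$. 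The $\ell$-independence of the count likewise rests on the existence of $\ell'$-companions on curves, not merely on the moduli space being defined over $\Q$, since the construction of $L_r(X,D)$ a priori depends on $\ell$.
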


The theorem implies in particular that for a given integer $N>0$ there are only finitely many
$V\in \dV_r(X,D)$  with $\det(V)^{\otimes N}=1$.
Following Deligne we will reduce the theorem to the one-dimensional case, where it is a well
known consequence of the Langlands correspondence of Drinfeld--Lafforgue. Some hints how
the one-dimensional case is related to the theory of automorphic forms are given in Section~\ref{sec.prdim1}. 
The proof of Theorem~\ref{thm.finite2} is completed in Section~\ref{sec.irrcomp}.

\medskip

\noindent
{\it Idea of proof.}
The central idea of Deligne is to define an algebraic moduli space structure on the set
$\dV_r(X,D)$, such that it becomes an affine scheme of finite type over $\Q$. In fact
$\dV_r(X,D)$ will be the $\bar \Q_\ell$-points of this moduli space. One shows
that the irreducible components of the moduli space over $\bar \Q_\ell$ are  `generated' by
certain
twists of 2-skeleton sheaves, which implies the finiteness theorem, because there are only
finitely many irreducible components.

Firstly, one constructs the moduli space structure of finite type over $\Q$ for $\dim(X)=1$. Then one immediately
gets an algebraic structure on $\dV_r(X,D)$ in the higher dimensional case and the central
point is to show that  $\dV_r(X,D)$ is of finite type over $\Q$ for higher dimensional $X$ too.

The main method to show the finite type property  is a result of Deligne (Theorem~\ref{frob.thm}), relying on Weil II and the Langlands correspondence, which says that for one-dimensional $X$ there is a natural number $N$
depending logarithmically on the genus of $\bar X$ and the degree of $D$ such that $V\in
\dV_r(X,D)$ is determined by the polynomials $f_V(x)$ with $\deg(x)\le N$. 
Here for $V\in \dV_r(X,D)$ we denote by $f_V(x)$ the characteristic polynomial of the Frobenius
at the closed point $x\in |X|$, see Section~\ref{sec.basics} for a precise definition.

\smallskip




\section{Ramification theory}

\noindent
In this section  we review some 
facts from ramification theory. We work over the finite field $\F_q$. In fact all results
remain true over a perfect base field of positive characteristic and for lisse \'etale $\ell$-adic sheaves.


\subsection{Local ramification} 
We follow \cite[Sec. 2.2]{Laumon}.
Let $K$ be a complete discretely valued field  with perfect residue field of
characteristic $p>0$. Let $G=\Gal(\bar K / K)$, where $\bar K$ is a separable closure of $K$. There is a descending filtration
$(I^{(\lambda)})_{0\le \lambda \in \R}$ by closed normal subgroups of $G$ with the following
properties:
\begin{itemize}
\item $\bigcap_{\lambda' < \lambda} I^{(\lambda')} = I^{(\lambda)}$,
\item $ \bigcap_{\lambda \in \R} I^{(\lambda)}=0 $,
\item $I^{(0+)}$ is the unique maximal pro-$p$ subgroup of the inertia group $I^{(0)}$, where $I^{(\lambda +)}$ is defined as $\overline{
    \bigcup_{\lambda' > \lambda} I^{(\lambda')}  }$.
\end{itemize}

Let $G \to \GL(V )$ be a continuous representation on a finite dimensional $\Ql$-vector
space $V$ with $\ell\ne p$.

\begin{defn}
The {\em Swan conductor} of $V$ is defined as 
\[
\Sw(V)= \sum_{\lambda >0} \lambda \dim(V^{I^{(\lambda +)}} / V^{I^{(\lambda )}}) . 
\]
\end{defn}

The Swan conductor is additive with respect to extensions of  $\ell$-adic Galois representations,  it does not
change if we replace $V$ by its semi-simplification.



For later reference we recall the behavior of the Swan conductor with respect to direct
sum and tensor product.
If $V,V'$ are two $\Ql$-$G$-modules as above and $V^\vee$ denotes the dual representation,  then
\begin{align}\label{Swprop1}
\Sw(V\oplus V' ) &=  \Sw(V) + \Sw(V')\\
\frac{\Sw(V\otimes V')}{\rank(V) \rank(V')} &\le \frac{\Sw(V)}{\rank(V)} + \frac{\Sw(V')}{\rank(V')}    \\
\label{Swprop3}  \Sw(V^\vee ) &= \Sw(V)
\end{align} 

\subsection{Global ramification $(\dim=1)$} \label{sec.gloramcur}

Let $X/\F_q$ be a smooth
connected curve with
smooth compactification $X\subset \bar X$. Let $V$ be in $\dR_r(X)$. 

The {\it Swan conductor} $\Sw(V)$ is defined to be  the effective Cartier divisor $$\sum_{x\in | \bar X|} \Sw_x(V) \cdot
[x] \in \Div^+(\bar X) .$$ 
Here $\Sw_x(V)$ is the Swan conductor of the restriction of the representation class $V$ to
the complete local field ${\rm frac}( \widehat{\sO_{\bar X,x}} )$.
We say that $V$ is {\em tame} if $\Sw(V)=0$.

Clearly the Swan conductor of $V$ is the same as the Swan conductor of any twist $\chi\cdot V,  \ \chi\in \dR_1(\F_q)$.

Let $\phi:X'\to X$ be an \'etale covering of smooth curves with compactification $\bar
\phi : \bar X' \to \bar X$.
 By $\Dis_{\bar X'/\bar X}\in \Div^+(\bar X)$ we denote the discriminant
\cite{Serre} of $\bar X'$ over $\bar X$, cf.\ Section~\ref{sec.gloram}.

\begin{lem}[Conductor-discriminant-formula]\label{conddisc} 
For $V\in \dR_r(X)$ with
 $\phi^*(V)$ tame
the inequality of divisors
\[
\Sw(V)  \le   \rank (V)\, \Dis_{\bar X'/\bar X}
\]
holds on $\bar X$.
\end{lem}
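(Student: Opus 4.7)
The statement is an inequality of effective Cartier divisors on $\bar X$, so it suffices to verify it one closed point at a time. Fix $x\in|\bar X|$, choose a point $x'\in\bar X'$ lying above $x$, and set $K={\rm frac}(\widehat{\sO_{\bar X,x}})$ and $L={\rm frac}(\widehat{\sO_{\bar X',x'}})$. One has $v_K(\Dis_{L/K})\le(\Dis_{\bar X'/\bar X})_x$, while the hypothesis that $\phi^*V$ is tame forces $V|_{G_L}$ to be tame. The problem thus reduces to the purely local statement: for a finite separable extension $L/K$ of complete discretely valued fields with perfect residue fields and for a $\Ql$-representation $V$ of $G_K$ with $V|_{G_L}$ tame,
\[
\Sw_K(V)\ \le\ \rank(V)\cdot v_K(\Dis_{L/K}).
\]

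For the local statement, the plan is to realize $V$ as a subrepresentation of the induced representation ${\rm Ind}_{G_L}^{G_K}(V|_{G_L})$ and then to compute the Swan conductor of the induction directly. By Frobenius reciprocity the identity on $V|_{G_L}$ produces a canonical $G_K$-equivariant embedding $V\hookrightarrow{\rm Ind}_{G_L}^{G_K}(V|_{G_L})$. Restricted to the wild inertia $P_K$, a pro-$p$ group on which $\Ql$-representations are semi-simple since $\ell\ne p$, $V|_{P_K}$ becomes a direct summand, whence
\[
\Sw_K(V)\ \le\ \Sw_K\bigl({\rm Ind}_{G_L}^{G_K}(V|_{G_L})\bigr).
\]

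To evaluate this upper bound, I would combine two classical tools. First, the inductivity of the Artin conductor (\cite{Serre}, Ch.~VI) gives, for any $G_L$-representation $W$,
\[
a_K\bigl({\rm Ind}_{G_L}^{G_K}W\bigr)\ =\ \dim W\cdot v_K(\Dis_{L/K})\ +\ f_{L/K}\cdot a_L(W).
\]
Second, a Mackey-style decomposition, performed in a finite Galois quotient through which $V$ factors, identifies
\[
\dim\bigl({\rm Ind}_{G_L}^{G_K}W\bigr)^{I_K}\ =\ f_{L/K}\cdot\dim W^{I_L},
\]
using the normality of $I_K$ in the Galois closure and the fact that $I_K$ acts on $G_K/G_L$ with exactly $f_{L/K}$ orbits. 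Taking $W=V|_{G_L}$, so that $\Sw_L(W)=0$ and $a_L(W)=\dim V-\dim V^{I_L}$, and substituting into the standard relation $\Sw_K(U)=a_K(U)-\dim U+\dim U^{I_K}$ for any $G_K$-representation $U$, a short computation yields
\[
\Sw_K\bigl({\rm Ind}_{G_L}^{G_K}(V|_{G_L})\bigr)\ =\ \rank(V)\cdot\bigl(v_K(\Dis_{L/K})-f_{L/K}(e_{L/K}-1)\bigr)\ \le\ \rank(V)\cdot v_K(\Dis_{L/K}),
\]
since $f_{L/K}(e_{L/K}-1)\ge 0$. Chaining this bound with the previous inequality will give the lemma.

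The technical heart of the argument, and the main obstacle, is the Mackey computation of the $I_K$-invariants of the induction: one must carefully pass to a finite Galois quotient, invoke the compatibility of the lower-numbering ramification filtration with subgroups in order to identify the double-coset stabilizers $g^{-1}I_Kg\cap G_L$, and enumerate the $f_{L/K}$ orbits of $I_K$ on $G_K/G_L$. Once this identity and the inductivity formula for the Artin conductor are both in place, the cancellation producing the wild part $v_K(\Dis_{L/K})-f_{L/K}(e_{L/K}-1)$ of the discriminant is immediate.
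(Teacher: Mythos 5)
Your proof is correct, and it takes a genuinely different route from the paper's. The paper's argument is very short: it observes the injection of sheaves $V\hookrightarrow\phi_*\phi^*(V)$ and then quotes a result of Raynaud (Prop.~1(c) of his Bourbaki talk on Euler--Poincar\'e characteristics) for the bound $\Sw_x(\phi_*\phi^*V)\le\rank(V)\cdot{\rm mult}_x(\Dis_{\bar X'/\bar X})$. You instead make this bound self-contained: you localize at a closed point, identify the pushforward with an induced representation ${\rm Ind}_{G_L}^{G_K}(V|_{G_L})$, and then compute its Swan conductor explicitly from the inductivity of the Artin conductor (Serre, \emph{Corps Locaux}, Ch.~VI) together with the Mackey computation $\dim({\rm Ind}_{G_L}^{G_K}W)^{I_K}=f_{L/K}\dim W^{I_L}$, which uses the normality of $I_K$ in $G_K$ to identify all the double-coset stabilizers with $I_L$ and to count $f_{L/K}$ double cosets. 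Both proofs share the same first step (embed $V$ into the pushforward and use that the Swan conductor is monotone under subobjects), but where the paper cites Raynaud you derive the bound from classical local ramification theory; your version even yields the slightly sharper estimate $\Sw_K(V)\le\rank(V)\bigl(v_K(\Dis_{L/K})-f_{L/K}(e_{L/K}-1)\bigr)$. One small cosmetic remark: normality of $I_K$ in $G_K$ is all that is needed to identify $g^{-1}I_Kg\cap G_L=I_L$, so passing to a Galois closure is not strictly necessary for that step, only for working in a finite quotient where Mackey theory applies in its textbook form.
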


\begin{proof}
By abuse of notation we write $V$ also for a sheaf representing $V$.
There is an injective map of sheaves on $X$
\[
V \to \phi_* \circ\phi^*(V)
\]
For any $x\in |X|$ 
\[
\Sw_x(V)\le \Sw_x(\phi_* \circ\phi^*(V))  \le \rank(V)\,
{\rm mult}_x (  \Dis_{\bar X'/\bar X} ) .
\]
The second inequality follows from \cite[Prop.\ 1(c)]{Raynaud}. 
\end{proof}

\bigskip

\begin{defn}
Let $D\in \Div^+(\bar X)$ be an effective Cartier divisor. 
The subset  $\dR_r(X,D)\subset \dR_r(X)$ is defined by the condition
$\Sw(V)\le D$. If 
$V\in \dR(X)$ lies in $ \dR_r(X,D)$, we say that its  {\it ramification is bounded by $D$}.

\end{defn}

Let $\F_{q^n}$ be the algebraic closure of $\F_q$ in $k(X)$.

\begin{defn}\label{def.complex}
For a divisor $D\in \Div^+(\bar X)$ 
we define the {\it complexity} of $D$ to be
\[
\sC_{D}= 2 g(\bar X) + 2 \deg_{\F_{q^n}}(D) + 1,
\]
where $g(\bar X)$ is the genus of $\bar X$ over $\F_{q^n}$ and $\deg_{\F_{q^n}}$ is the
degree over $\F_{q^n}$. 
\end{defn}

\begin{prop}\label{dimestimate}
Assume $X/\F_q$ is geometrically connected.
For $D\in \Div^+(\bar X)$  with
$\mathrm{supp}(D)=\bar X\setminus X$ and  for $V\in \dR_r(X,r D)$, the inequality
\[
\dim_{\bar{\Q}_\ell} H^0_c(X\otimes_{\F_q}\F,V) + \dim_{\bar{\Q}_\ell} H^1_c(X \otimes_{\F_q} \F,V) \le
{\rm rank} (V)\,  \sC_D 
\]
holds.
\end{prop}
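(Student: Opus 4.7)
The plan is to apply the Grothendieck--Ogg--Shafarevich (GOS) Euler characteristic formula on the base change $\bar X_{\F} := \bar X \otimes_{\F_q} \F$, combined with crude dimension bounds on $H^0_c$ and $H^2_c$, and then to convert the geometric data appearing in GOS into $\F_q$-rational invariants of $D$.

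I would split into cases. If $X=\bar X$ is proper, the support hypothesis on $D$ forces $D=0$ and $\Sw(V)=0$, and $V$ is lisse on $\bar X$. Then $h^0_c=\dim V^{\pi_1(\bar X_\F)}\le r$ and $h^2_c=\dim V_{\pi_1(\bar X_\F)}(-1)\le r$, while $h^0-h^1+h^2=r\chi(\bar X_\F)=r(2-2g)$; solving for $h^0+h^1=2h^0+h^2-r(2-2g)$ yields at once the bound $r(2g+1)=r\sC_D$. In the main case $X$ is affine. Here the key simplification is $H^0_c(X_\F,V)=0$ (standard vanishing for a lisse sheaf on a connected non-proper smooth curve), while $h^2_c\le r$ as above. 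Writing GOS in the form
\[
\chi_c(X_\F,V)\;=\;r(2-2g-N_\F)\;-\;\sum_{y\in|\bar X_\F|\setminus|X_\F|}\Sw_y(V),
\]
with $N_\F:=|\bar X_\F\setminus X_\F|$, the identity $h^1_c=h^2_c-\chi_c$ reduces the problem to bounding the right hand side from above.

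What I expect to be the only real accounting step is converting the geometric quantities $N_\F$ and $\sum_y\Sw_y(V)$ into invariants of the $\F_q$-rational divisor $D$. Grouping geometric points over closed points of $\bar X$ gives $N_\F=\sum_{x\in\bar X\setminus X}\deg(x)$ and $\sum_y\Sw_y(V)=\sum_x\deg(x)\Sw_x(V)$. The hypothesis $V\in\dR_r(X,rD)$ means $\Sw_x(V)\le r\cdot\mathrm{mult}_x(D)$ at every closed point, so $\sum_y\Sw_y(V)\le r\deg_{\F_q}(D)$; and because $\mathrm{supp}(D)=\bar X\setminus X$ every multiplicity is at least $1$, so also $N_\F\le\deg_{\F_q}(D)$. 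These two inequalities supply the two copies of $\deg_{\F_q}(D)$ that appear in $\sC_D$. Combining,
\[
h^0_c+h^1_c\;=\;h^1_c\;\le\;r+r(2g+N_\F-2)+r\deg_{\F_q}(D)\;\le\;r\bigl(2g+2\deg_{\F_q}(D)-1\bigr)\;\le\;r\sC_D,
\]
with room to spare. No serious obstacle is anticipated beyond the bookkeeping between $\F_q$-degrees and $\F$-point counts on $\bar X\setminus X$.
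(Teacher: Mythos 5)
Your proof is correct and takes essentially the same route as the paper: Grothendieck--Ogg--Shafarevich over $X\otimes_{\F_q}\F$, the crude bounds $h^0_c\le r$ and $h^2_c\le r$, and the degree bookkeeping $N_\F\le\deg_{\F_q}(D)$, $\sum\Sw\le r\deg_{\F_q}(D)$ coming from $\mathrm{supp}(D)=\bar X\setminus X$ and $\Sw(V)\le rD$. The only difference is that in the affine case you use the sharper vanishing $H^0_c=0$, which gives some slack, whereas the paper's stated inequalities $h^0_c\le r$, $h^2_c\le r$ already yield the bound $r\,\sC_D$ exactly; both are fine.
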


\begin{proof}
Grothendieck-Ogg-Shafarevich theorem says that
\[
\chi_c(X \otimes_{\F_q} \F,V) = (2-2g(\bar X))\, {\rm rank}(V) -
\sum_{x\in \bar X\setminus X} 
( \rank(V) +s_x(V)) ,
\]
see \cite[Th\'eor\`eme 2.2.1.2]{Laumon}.
Furthermore 
\ml{}{{\rm dim} \ H^0_c(X \otimes_{\F_q} \F, V)\le r \ {\rm and} \\ 
{\rm dim}  \ H^2_c(X \otimes_{\F_q} \F,V)={\rm dim} \ H^0(X\otimes_{\F_q} \F,V^\vee) \le r. \notag
}
\end{proof}

\subsection{Global ramification $ (\dim \ge 1)$}\label{sec.gloram}

We follow an idea of Alexander Schmidt for the definition of the discriminant for higher
dimensional schemes. 

  Let $X$ be a connected scheme in $ \Sm_{\F_q}$.
Let $X \subset \bar X$ be a normal compactification of $X$ over $k$ such that $\bar X \setminus X$
is the support of an effective Cartier divisor on $\bar X$. Clearly, such a
compactification always exists.

Let $\Cu(X)$ be the set of normalizations of closed integral subschemes of $X$ of
dimension one. For $C$ in $\Cu(X)$ denote by $\phi: C\to X$ the natural morphism. By $\bar
C$ we denote the smooth compactification of $C$ over $\F_q$ and by $\bar \phi: \bar C\to \bar
X$ we denote the canonical extension.

 Recall that in Section~\ref{sec.intro} we introduced the set of lisse $\Ql$-Weil sheaves
$\dR_r(X)$ and of 2-skeleton sheaves $\dV_r(X)$ on $X$ of rank $r$.

\begin{defn}\label{def.boundram}
For $V\in \dR_r(X)$ or $V\in \dV_r(X)$ and $D\in \Div^+(\bar X)$ an effective Cartier with support in $\bar X
\setminus X$ we (formally) write $\Sw(V)\le D$ and say that the {\it ramification of $V$ is bounded by $D$ } if for every curve $C\subset \Cu(X)$ we have
\[
\Sw ( \phi^* (V))  \le \bar \phi^*(D)
\]
 in the sense of Section~\ref{sec.gloramcur}.
The subsets  $\dR_r(X,D)\subset \dR_r(X)$  and $\sV_r(X, D)\subset \dV_r(X)$ are  defined by the condition
$\Sw(V)\le D$.
\end{defn}

In the rest of this section we show that for any $V\in \dR_r(X)$ there is an effective
divisor $D$  with $\Sw(V)\le D$.

\medskip

Let $\psi: X' \to X$ be an \'etale covering (thus  finite) and let $\bar \psi : \bar X' \to \bar
X$ be the finite, normal extension of $X'$ over $\bar X$.

\begin{defn}[A.\ Schmidt]\label{def.disc}
The  {\em
discriminant} $\sI(\Dis_{\bar X'/\bar X})$ is the coherent ideal sheaf in $\sO_{\bar X}$
locally generated by all elements
\[ 
 \det ({\rm Tr}_{K'/K} ( x_i \, x_j  ) )_{i,j}   
\]
where $x_1, \ldots ,x_n\in \psi_* (\sO_{\bar X'})$ are local sections restricting to a basis of
$K'$ over $K$. Here $K= k(X)$ and $K'=k(X')$.
\end{defn}

Clearly, $\sI(\Dis_{\bar X'/\bar X})|_{X} = \sO_X$. This definition extends the classical
definition for curves \cite{Serre},  in which case $\sI(\Dis_{\bar X'/\bar X})=\sO_{\bar X}(-\Dis_{\bar X'/\bar X})$,
where $X\subset \bar X$  and $X'\subset \bar X'$ are the smooth compactifications.

The following lemma is easy to show.

\begin{lem}[Semi-continuity]\label{lem.semcont}
In the situation of Definition~\ref{def.disc}
let $\bar \phi: \bar C \to \bar X$ be a smooth curve mapping to $\bar X$ with $C=\bar
\phi^{-1}(X)$ non-empty. Let $C'$ be a connected component of $ C \times_X X'$ and let $C'\hookrightarrow \bar C'$ be the smooth
compactification.   Then 
\[
  \bar \phi^{-1} (\sI(\Dis_{\bar X'/\bar X})) \subset \sO_{\bar C}(-\Dis_{\bar C'/\bar C})  .
\]
\end{lem}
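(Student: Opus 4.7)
The statement is local on $\bar C$, so I would fix a point $q \in \bar C$ with image $p = \bar\phi(q) \in \bar X$ and argue in the local ring $\sO_{\bar C,q}$, which is a discrete valuation ring. Set $Z = C \times_X X'$ and $\bar Z = \bar C \times_{\bar X} \bar X'$. Since $\psi: X'\to X$ is étale, $Z \to C$ is étale, hence $Z$ is a smooth curve that decomposes into finitely many connected components $C_1',\ldots,C_s'$. Taking smooth compactifications yields the curves $\bar C_k'$ (with, say, $\bar C' = \bar C_1'$), and the associated finite morphisms $\bar\phi_k': \bar C_k' \to \bar C$ satisfy $n := [K':K] = \sum_k m_k$ with $m_k = [k(\bar C_k') : k(\bar C)]$. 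The point of the decomposition is that, since $\psi$ is étale, the base change of $K'/K$ to $k(\bar C)$ is a product of field extensions: $K' \otimes_K k(\bar C) = \prod_k L_k$, with $L_k = k(\bar C_k')$, and consequently $\Tr_{K'/K}$ pulls back to $\sum_k \Tr_{L_k/k(\bar C)}$.

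Given local sections $x_1,\ldots,x_n \in (\bar\psi_*\sO_{\bar X'})_p$ that restrict to a basis of $K'$ over $K$, I would like to show $\bar\phi^*(\det \Tr_{K'/K}(x_ix_j)) \in \sO_{\bar C}(-\Dis_{\bar C'/\bar C})_q$; since the inverse-image ideal is generated by such pullbacks, this proves the lemma. Using that $\bar\psi$ is affine one obtains $\bar\phi^*(\bar\psi_*\sO_{\bar X'}) = \bar\psi'_*\sO_{\bar Z}$, and composing with the map to the normalization of $\bar Z$ produces an injection into $\prod_k (\bar\phi_k')_*\sO_{\bar C_k'}$. Write $x_i^{(k)}$ for the components of $\bar\phi^*(x_i)$. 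Each $((\bar\phi_k')_*\sO_{\bar C_k'})_q$ is a finitely generated torsion-free module over the DVR $\sO_{\bar C,q}$, hence free; pick a basis $e_1^{(k)},\ldots,e_{m_k}^{(k)}$ and expand $x_i^{(k)} = \sum_l a_{il}^{(k)} e_l^{(k)}$ with $a_{il}^{(k)} \in \sO_{\bar C,q}$.

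The computation then boils down to the matrix factorization
\[
\bigl(\bar\phi^*\Tr_{K'/K}(x_ix_j)\bigr)_{ij} \;=\; \sum_k \Tr_{L_k/k(\bar C)}(x_i^{(k)}x_j^{(k)}) \;=\; A\, G\, A^T,
\]
where $A$ is the $n\times n$ matrix obtained by concatenating the blocks $A^{(k)} = (a_{il}^{(k)})$ (of sizes $n \times m_k$, with $\sum_k m_k = n$) and $G = \mathrm{diag}(G^{(1)},\ldots,G^{(s)})$ with $G^{(k)} = (\Tr_{L_k/k(\bar C)}(e_l^{(k)}e_{l'}^{(k)}))$. Taking determinants,
\[
\bar\phi^*\bigl(\det \Tr_{K'/K}(x_ix_j)\bigr) \;=\; \det(A)^2 \cdot \prod_k \det(G^{(k)}).
\]
By the curve case of the definition each $\det(G^{(k)})$ generates $\sO_{\bar C}(-\Dis_{\bar C_k'/\bar C})_q$, so the product lies in $\prod_k \sO_{\bar C}(-\Dis_{\bar C_k'/\bar C})_q$, and $\det(A) \in \sO_{\bar C,q}$. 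Since all discriminants $\Dis_{\bar C_k'/\bar C}$ are effective, the product ideal is contained in the single ideal $\sO_{\bar C}(-\Dis_{\bar C'/\bar C})_q$, which finishes the argument.

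The only genuinely delicate point, I expect, is justifying the decomposition $\bar\phi^*(\bar\psi_*\sO_{\bar X'}) \hookrightarrow \prod_k (\bar\phi_k')_*\sO_{\bar C_k'}$ at the level of stalks (checking that each connected component of $Z$ indeed contributes a smooth compactification dominating $\bar C$, and that the map into the normalization of $\bar Z$ is the correct one), together with the identification of $K' \otimes_K k(\bar C)$ with $\prod_k L_k$. Everything else is the linear-algebra identity $\det(AGA^T) = \det(A)^2 \det(G)$ together with the positivity used to absorb the factors $\det(G^{(k)})$ for $k \ne 1$.
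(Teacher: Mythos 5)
The paper offers no proof of this lemma, stating only that it is ``easy to show,'' so there is nothing to compare against; your argument is a correct and sensible elaboration. The matrix factorization $\bigl(\bar\phi^*\Tr_{K'/K}(x_ix_j)\bigr) = A\,G\,A^T$ with $G$ block-diagonal, giving $\det = (\det A)^2\prod_k\det G^{(k)}$ and then absorbing the extra effective discriminants $\det G^{(k)}$ ($k\neq 1$) into $\sO_{\bar C,q}$, is exactly the right computation, and the sizes work out since $\sum_k m_k = n$.

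Two small precision remarks. First, the phrase ``$K'\otimes_K k(\bar C) = \prod_k L_k$'' is not literal, because when $\dim X>1$ the generic point of $C$ does not map to the generic point of $X$, so there is no ring map $K\to k(\bar C)$. What you actually need (and use) is that $\psi_*\sO_{X'}$ is a finite \emph{\'etale} $\sO_X$-algebra, hence its trace form commutes with arbitrary base change; pulling back along $\Spec k(C)\to C\to X$ gives the \'etale $k(C)$-algebra $\prod_k L_k$ together with the identity $\bar\phi^*\Tr_{K'/K}(y)=\sum_k\Tr_{L_k/k(C)}(y^{(k)})$, first over $C$ and then on all of $\bar C$ by density and normality. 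Second, rather than passing through $\bar Z=\bar C\times_{\bar X}\bar X'$ (which need not be reduced) and its normalization, it is cleaner to observe that each $C'_k\to X'\hookrightarrow\bar X'$ extends to $\bar C'_k\to\bar X'$ by the valuative criterion of properness ($\bar X'$ is finite over the proper $\bar X$), fitting over $\bar C\to\bar X$; pulling the local sections $x_i\in(\bar\psi_*\sO_{\bar X'})_p$ back along these maps immediately produces $x_i^{(k)}\in(\bar\phi'_{k,*}\sO_{\bar C'_k})_q$, which is all your computation requires. With these adjustments the proof is complete and correct.
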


\bigskip

\begin{prop}\label{prop.rambound}
For $V\in \dR_r(X)$ there is an effective Cartier divisor $D\in \Div^+(\bar X)$ such that $\Sw(V) \le D$.
\end{prop}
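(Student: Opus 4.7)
The plan is to construct a finite étale cover $\psi:X'\to X$ along which $V$ becomes tame, encode $\psi$ into an effective Cartier divisor on $\bar X$ via Schmidt's discriminant, and then verify the ramification bound curve by curve by combining the Conductor-discriminant formula (Lemma~\ref{conddisc}) with the semi-continuity Lemma~\ref{lem.semcont}.

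First I would build the tame cover. The class $V\in\dR_r(X)$ is represented by a continuous homomorphism $\rho:W(X)\to\GL_r(E)$ for some finite extension $E/\Q_\ell$. The restriction of $\rho$ to the profinite subgroup $\pi_1(X\otimes_{\F_q}\F)\subset W(X)$ has compact image, which after a change of basis is contained in $\GL_r(\sO_E)$. Reduction modulo $\ell$ then produces a continuous homomorphism $\pi_1(X\otimes_{\F_q}\F)\to\GL_r(\sO_E/\ell\sO_E)$ with open finite-index kernel $H_0$, and $H_0$ corresponds to a finite étale cover $Y\to X\otimes_{\F_q}\F$. Since $X\otimes_{\F_q}\F=\varprojlim_n X\otimes_{\F_q}\F_{q^n}$ has affine transition maps, the standard descent of finitely presented schemes along such a limit (EGA~IV) yields a finite étale cover $X'\to X\otimes_{\F_q}\F_{q^n}$ for some $n$ whose base change to $\F$ recovers $Y$. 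Composition with the étale base change $X\otimes_{\F_q}\F_{q^n}\to X$ gives a finite étale $\psi:X'\to X$. By construction the geometric monodromy of $\psi^*V$ lies in the pro-$\ell$ group $1+\ell M_r(\sO_E)$; since wild inertia along any curve is pro-$p$ and $\ell\neq p$, the pullback of $\psi^*V$ to every curve in $X'$ is tame.

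Next I would turn $\psi$ into a divisor on $\bar X$. Let $\bar X'$ be the normalization of $\bar X$ in $k(X')$ and let $\sI:=\sI(\Dis_{\bar X'/\bar X})\subset\sO_{\bar X}$ be Schmidt's discriminant ideal (Definition~\ref{def.disc}). Since $\psi$ is étale on $X$, the zero locus of $\sI$ is contained in $\bar X\setminus X$; pick any effective Cartier divisor $D_0\in\Div^+(\bar X)$ with support $\bar X\setminus X$. Noetherianity of $\bar X$ provides an integer $n_0\ge 1$ with $\sO_{\bar X}(-n_0 D_0)\subset\sI$. I claim $D:=rn_0 D_0$ does the job. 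Indeed, fix $C\in\Cu(X)$ with map $\bar\phi:\bar C\to\bar X$ and let $C'$ be a connected component of $(C\times_X X')_{\mathrm{red}}$. Then $C'\to C$ is finite étale and the pullback of $\psi^*V$ to $\bar C'$ is tame, so Lemma~\ref{conddisc} gives $\Sw(\bar\phi^*V)\le r\,\Dis_{\bar C'/\bar C}$ on $\bar C$. Pulling the inclusion $\sO_{\bar X}(-n_0 D_0)\subset\sI$ back by $\bar\phi$ and applying Lemma~\ref{lem.semcont} yields
\[
\sO_{\bar C}(-n_0\bar\phi^*D_0)\subset\bar\phi^{-1}(\sI)\subset\sO_{\bar C}(-\Dis_{\bar C'/\bar C}),
\]
hence $\Dis_{\bar C'/\bar C}\le n_0\bar\phi^*D_0$. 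Combining, $\Sw(\bar\phi^*V)\le rn_0\bar\phi^*D_0=\bar\phi^*D$, which is the condition $\Sw(V)\le D$ of Definition~\ref{def.boundram}.

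The main obstacle is the first step: producing a finite étale cover of $X$ itself (not merely of $X\otimes_{\F_q}\F$) over which $V$ becomes tame. Reduction mod $\ell$ easily furnishes an open subgroup $H_0$ of $\pi_1(X\otimes_{\F_q}\F)$ whose quotient is a finite subgroup of $\GL_r(\sO_E/\ell\sO_E)$, but one still needs to descend the associated étale cover through the base change $X\otimes_{\F_q}\F_{q^n}\to X$; invoking the limit description of $X\otimes_{\F_q}\F$ gives this descent without any explicit Galois-stability argument. The remaining two steps are then formal consequences of Schmidt's semi-continuity and the Conductor-discriminant formula on curves.
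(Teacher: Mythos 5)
Your proposal is correct and follows the same overall strategy as the paper: produce a finite \'etale $\psi:X'\to X$ along which $V$ becomes tame, bound Schmidt's discriminant ideal $\sI(\Dis_{\bar X'/\bar X})$ by a divisor supported on $\bar X\setminus X$, and then conclude curve by curve via Lemma~\ref{lem.semcont} and Lemma~\ref{conddisc}. The only real divergence is in how the tame cover is produced. The paper first invokes Remark~\ref{weil.etale} (i.e.\ Proposition~\ref{weil.etale.gen}) to replace $V$ by a twist that is an honest \'etale sheaf --- this is harmless because the Swan conductor is twist-invariant, as noted in Section~\ref{sec.gloramcur} --- and then reduces the resulting integral $\pi_1(X)$-representation modulo the maximal ideal to get $X'\to X$ in one step. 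You instead keep $V$ as a Weil sheaf, restrict to $\pi_1(X\otimes_{\F_q}\F)$, reduce modulo $\ell$, obtain a cover of $X\otimes_{\F_q}\F$, and then descend it to a cover of $X$ via the limit argument; this is correct (the key point you leave implicit, that wild inertia of any curve lands inside the geometric fundamental group and that a continuous map from a pro-$p$ to a pro-$\ell$ group is trivial, is indeed what makes the pullback tame) but it costs you an EGA descent step that the paper's twisting argument renders unnecessary. In exchange, your version does not depend on the statement that every irreducible Weil sheaf is a twist of an \'etale sheaf. Both routes are fine; the discriminant-and-conductor half of your argument is identical to the paper's.
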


\begin{proof}
By Remark~\ref{weil.etale} we can assume that $V$ is an \'etale sheaf on $X$. Then there is a
 local field $ E\subset \Ql$ finite over $\Q_\ell$ with ring of integers $\sO_E$ such that $V
$ comes form an $\ell$-adic $\sO_E$-sheaf $V_1$. Let $\hat E$ be the finite residue field of $\sO_E$.
There is  a connected \'etale covering $\psi:X'\to X$ such that $\psi^*(V_1
\otimes_{\sO_E} \hat E)$ is trivial. This implies that $\psi^*(V)$ is tame. Let $D_1\in
\Div^+(\bar X)$ be an effective Cartier divisor with support in $\bar X \setminus X$ such that $\sO_{\bar
  X}(-D_1) \subset \sI(\Dis_{\bar X'/\bar X})$ and set $D=\rank(V) D_1$. With the notation of
Lemma~\ref{lem.semcont} we obtain
\[
 \bar \phi^*(D_1) \ge \Dis_{\bar C'/\bar C}
\]
 As the pullback of
$V$ to $C'$ is tame we obtain from Lemma~\ref{conddisc} the first inequality in
\[
\Sw( \phi^*(V) ) \le \rank (V) \Dis_{\bar C'/\bar C} \le  \bar \phi^*(D).
\]
\end{proof}

\begin{rmk}
We do not know any example for a $V\in \dV_r(X)$ for which there does  {\em not} exist  a
divisor $D$ with $\Sw(V)\le D$.  If such an example existed, it would in particular show,
in view of Proposition~\ref{prop.rambound}, that not all 2-skeleton sheaves are actual
sheaves.
\end{rmk}

We conclude this section by a remark on the relation of our ramification theory with the theory of
Abbes-Saito \cite{AS}.
 We expect that 
for $V\in \dR_r(X)$,  $\Sw(V) \le D$ is
equivalent to the following: For every  open immersion $X\subset  X_1$ over $\F_q$ with
the property that
$X_1 \setminus X$ is a simple normal crossing divisor and for any morphism
$h: X_1 \to \bar X$,  the Abbes-Saito log-ramification Swan conductor of $ h^*(V)$ at a maximal point
of $ X_1 \setminus X$ is $\le$  the multiplicity of $h^*(D)$ at the maximal point. 

For $D=0$ this equivalence is shown in \cite{KeSch} relying on \cite{W}. For $r=1$ it is
known modulo resolution of singularities
by work of I.\ Barrientos (forthcoming Ph.D. thesis, Universit\"at Regensburg).

\section{$\ell$-adic sheaves}

\subsection{Basics}\label{sec.basics}

For $X\in \Sm_{\F_q}$ we defined in Section~\ref{sec.intro} the set $\dR_r(X)$ of lisse $\bar
\Q_\ell$-Weil sheaves on $X$ of rank $r$   up to isomorphism and up to semi-simplification and the set $\dV_r(X)$ of 2-skeleton sheaves.
Clearly, $\dR_r$ and $\dV_r$ form contravariant functors from $\Sm_{\F_q}$ to the category
of sets.

For $V\in \dR_r(X)$,  taking the characteristic polynomials of Frobenius defines a function
\[
f_V: |X|  \to  \bar{\Q}_{\ell}[t], \ \ \ f_V(x)=\det(1-t\, F_x, V_{\bar x}).
\]
For $V\in \dV_r(X)$ we can still define $f_V(x)$ by choosing a curve $C\in \Cu(X)$ such
that $C\to X$ is a closed immersion in a neighborhood of $x$ and we set $f_V(x)=f_{V_C}(x)$. It follows from
the definition that $f_V(x)$ does not depend on the choice of $C$. 

We define the trace 
\[
t^n_V: X(\F_{q^n})  \to  \bar{\Q}_{\ell}, \ \ \ t^n_V(x)={\rm tr}( F_x, V_{\bar x})
\]
for $V\in \dR_r(X)$ and similarly for $V\in \dV_r(X)$.

\medskip

We define $\sP_r$ to be the affine scheme over $\Q$ whose points $\sP_r(A)$ with values in
a
$\Q$-algebra $A$ consist of  the set of polynomials $1+a_1 t + \cdots + a_r t^r \in A[t]$ with
$a_r\in A^\times$. Mapping $(\alpha_i)_{1\le i\le r}$ with $\alpha_i\in A^\times$  to $$(1-\alpha_1t) \cdots
(1-\alpha_r t)\in A[t]$$ defines a scheme isomorphism 
\begin{equation}\label{eq.Gmiso}
\mathbb G_m^r / S_r
\xrightarrow{\simeq} \sP_r,
\end{equation}
 where $S_r$ is the permutation group of $r$ elements.

For $d\ge 1$  the finite morphism $\G_m^r\to \G_m^r$ which sends $(\alpha_1,\ldots, \alpha_r)$ to $(\alpha_1^d,\ldots, \alpha_r^d)$
descends to $\sP_r$ 
to define the 
finite scheme homomorphism $\psi_d:\sP_r \to \sP_r$.

Let $\sL_r(X)$ be the product $\prod_{|X|} \sP_r$ with one copy of $\sP_r$ for every
closed point of $X$. It is an affine scheme  over $\Q$ which if ${\rm dim}(X)\ge 1$  is not of finite type over
$\Q$.
Denote by $\pi_x:\sL_r(X) \to \sP_r$ the projection onto the factor corresponding to $x\in |X|$.
 We make $\sL_r$ into a contravariant functor from $\Sm_{\F_q}$ to the category of
affine schemes over $\Q$ as follows: Let $f:Y\to X$ be a morphism of schemes in
$\Sm_{\F_q}$. The image of $(P_x)_{x\in  |X|}\in \sL_r(X)$ under pullback by $f$ is defined to be
\[
\left(\psi_{[k(y):k(f(y))]} P_{f(y)}  \right)_{y\in |Y|} \in \sL_r(Y) .
\]

For $N>0$ we  similarly define $\sL^{\le N}_r(X)$ to be the product over all $x\in |X|$
with $\deg(x)\le N$ over $\F_q$, with the corresponding forgetful morphism $\sL_r(X)\to \sL_r^{\le N}(X)$.

\medskip

Putting things together we get morphisms of contravariant functors
\begin{equation} \label{fun.inj}
\dR_r(X)   \longrightarrow \dV_r(X)   \xrightarrow{\kappa : V\mapsto f_V} \sL_r(X)(\bar \Q_\ell). 
\end{equation}

\begin{prop}\label{bas.charpoly}
For $X\in \Sm_{\F_r}$ the maps $\dR_r(X)   \to \sL_r(X)(\bar \Q_\ell)$ and $\dV_r(X)   \xrightarrow{\kappa}
\sL_r(X)(\bar \Q_\ell)$ are injective.
\end{prop}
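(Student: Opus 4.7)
My plan is to reduce both injectivity statements to a single Chebotarev-density plus Brauer--Nesbitt argument on smooth curves, and then to bootstrap up to general $X$ via the functoriality of $\kappa$.

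First I will handle the curve case of the $\dR_r$-injectivity. For a smooth connected curve $C/\F_q$, an element of $\dR_r(C)$ is (by the definition of $\dR_r$ via semi-simplification) a continuous semi-simple representation $\rho:W(C) \to \GL_r(\bar\Q_\ell)$ factoring through $\GL_r(E)$ for some finite extension $E/\Q_\ell$. Chebotarev density (see \cite[1.2.1]{WeilII}) asserts that the set $\{F_x\}_{x\in |C|}$ of geometric Frobenii is dense in $W(C)$. The characteristic polynomial $\det(1 - tF_x, V_{\bar x}) \in \sP_r(\bar\Q_\ell)$ is equivalent, via Newton's identities in characteristic zero, to the data of the traces $\mathrm{tr}(F_x^n, V_{\bar x})$ for $1\le n \le r$; hence $\kappa_C(V) = \kappa_C(W)$ forces $V$ and $W$ to have identical trace functions on a dense subset of $W(C)$. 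By continuity their traces agree on all of $W(C)$, and Brauer--Nesbitt concludes $V \cong W$. Thus $\kappa_C$ restricted to $\dR_r(C)$ is injective.

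The same Chebotarev + Brauer--Nesbitt argument goes through verbatim for any connected $X \in \Sm_{\F_q}$, replacing $W(C)$ by $W(X)$, yielding the injectivity of $\kappa_X$ on $\dR_r(X)$; the possibly disconnected case then reduces to this by working one component at a time.

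For the $\dV_r$-injectivity I will exploit the functoriality of $\kappa$ recorded just before the proposition. Suppose $V, W \in \dV_r(X)$ satisfy $\kappa_X(V) = \kappa_X(W)$. For every $C \in \Cu(X)$ with structure map $f:C \to X$, functoriality of $\kappa$ and $\sL_r$ gives
\[
\kappa_C(V_C) \;=\; f^{*}\kappa_X(V) \;=\; f^{*}\kappa_X(W) \;=\; \kappa_C(W_C)
\]
in $\sL_r(C)(\bar\Q_\ell)$, where $V_C, W_C \in \dR_r(C)$ are the constituents of $V, W$ at $C$. Applying the already-established curve case of the $\dR_r$-injectivity yields $V_C = W_C$ in $\dR_r(C)$. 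As $C$ was arbitrary, $V = W$ in $\dV_r(X)$, proving the second half of the proposition.

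The only point that needs to be checked carefully is the functoriality of $\kappa:\dV_r \to \sL_r(\bar\Q_\ell)$ at a point $y \in |C|$ where $C \to X$ is not a closed immersion: one then picks up the twist $\psi_{[k(y):k(f(y))]}$ built into the definition of pullback on $\sL_r$. This is however essentially forced by the definition of $f_V(x)$ for $V \in \dV_r(X)$ (via some $C'' \in \Cu(X)$ which is a closed immersion near $x$) together with the compatibility condition defining 2-skeleton sheaves applied to the pair $(C, C'')$, and so presents no genuine obstacle.
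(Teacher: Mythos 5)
Your proof is correct and follows essentially the same route as the paper's: reduce the $\dV_r$-injectivity to the curve case of the $\dR_r$-injectivity, recover traces from characteristic polynomials (Newton's identities), and apply Chebotarev density plus Brauer--Nesbitt (the paper cites Laumon, Thm.\ 1.1.2, which packages exactly this for Weil sheaves). Your explicit spelling out of the functoriality of $\kappa$ at points where $C\to X$ is not a closed immersion is a correct unwinding of what the paper leaves implicit when it records \eqref{fun.inj} as a morphism of functors.
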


\begin{proof}
We only have to show the injectivity for $\dR_r(X)$, since the curve case for $\dR_r(X)$
implies already the general case for $\dV_r(X)$.  We can easily recover the trace
functions $t^n_V$ from the characteristic polynomials $f_V$. The Chebotarev density
theorem \cite[Ch.~6]{FrJar}
implies that the traces of Frobenius determine semi-simple sheaves, see \cite[Thm.~1.1.2]{Laumon}.
\end{proof}

In Section~\ref{sec.frob}  we will prove a much stronger result, saying that a finite number of characteristic
polynomials $f_V(x)$ are sufficient to recover $V$ up to twist, as long as $V$ runs over $\ell$-adic
sheaves with some fixed bounded ramification and fixed rank.

For later reference we recall the relation between Weil sheaves and \'etale sheaves from
Weil II \cite[Prop.~1.3.4]{WeilII}.   We say that $V\in \dR_r(X)$ is {\it \'etale}  if it comes from a  lisse \'etale
$\Ql$-sheaf on $X$.

\begin{prop}\label{weil.etale.curve}
For $X$ connected and $V\in \dR_1(X)$, which we consider as a continuous homomorphism
$V:W(X)\to \Ql^\times$, the geometric monodromy group  ${\rm im}(\pi_1(X_{\bar \F}))\subset  W(X)/\ker(V)$ is finite, in
particular the monodromy group  $ W(X)/\ker(V)$ is discrete.
The sheaf $V$ extends to a continuous homomorphism $ \pi_1(X) \to \Ql^\times$, i.e.\  V is
\'etale, if and only if $\im
(V) \subset \bar \Z_\ell^\times $.
\end{prop}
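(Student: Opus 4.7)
My plan is to split the statement into (a) finiteness of the geometric monodromy, which will immediately give the discreteness claim, and (b) the \'etale criterion. The only nontrivial input is Deligne's weight bound on $H^1$ of a smooth variety from Weil II.

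By continuity, $V$ factors through $E^\times$ for some finite extension $E/\Q_\ell$. Since $\pi_1(X_{\bar\F})$ is profinite, hence compact, the restriction $V_0 := V|_{\pi_1(X_{\bar\F})}$ takes values in the maximal compact subgroup $\sO_E^\times \subset E^\times$. As the target of $V$ is abelian, the identity $V(FgF^{-1}) = V(g)$ for any lift $F$ of the geometric Frobenius shows that $V_0$, viewed as a character of the abelianization $\pi_1(X_{\bar\F})^{ab}$, is Frobenius-invariant.

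For (a) I would use the decomposition $\sO_E^\times \cong T \times \Z_\ell^{[E:\Q_\ell]}$ with $T$ the finite torsion subgroup, and project $V_0$ onto each $\Z_\ell$-factor. Each projection is a Frobenius-invariant continuous homomorphism $\pi_1(X_{\bar\F})^{ab} \to \Z_\ell$, so under the identification $\Hom_{\text{cts}}(\pi_1(X_{\bar\F})^{ab}, \Z_\ell) = H^1(X_{\bar\F}, \Z_\ell)$ it corresponds to a Frobenius-invariant class. The group $H^1(X_{\bar\F}, \Z_\ell)$ is torsion-free (as a group of continuous homomorphisms into torsion-free $\Z_\ell$), so it embeds into $H^1(X_{\bar\F}, \Q_\ell)$, which by Deligne's weight bound in \cite{WeilII} has weights $\ge 1$ for smooth $X$; hence $F-1$ is invertible there, and the space of Frobenius-invariants vanishes. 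Therefore each projection is zero, $\im(V_0) \subseteq T$ is finite, and the geometric monodromy is finite. Discreteness of $W(X)/\ker(V)$ is then formal, since $\ker(V_0)$ is open in $\pi_1(X_{\bar\F})$ and $W(X)/\pi_1(X_{\bar\F}) = \Z$ already carries the discrete topology.

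For (b), if $V$ is \'etale then it factors continuously through the profinite group $\pi_1(X)$, so $\im(V) \subset \sO_E^\times \subset \bar\Z_\ell^\times$. Conversely, if $\im(V) \subset \bar\Z_\ell^\times$ then, after enlarging $E$, we may assume $V(F) \in \sO_E^\times$, so the entire image lies in the profinite group $\sO_E^\times$; the cyclic subgroup $\langle V(F) \rangle$ then has compact closure, so $\Z \to \sO_E^\times$, $n \mapsto V(F)^n$ extends uniquely to a continuous $\hat\Z \to \sO_E^\times$, and combined with $V_0$ this yields a continuous extension $\pi_1(X) \to \sO_E^\times$, exhibiting $V$ as \'etale. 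The main obstacle is the invocation of the weight bound on $H^1$: once one accepts that $H^1(X_{\bar\F}, \Q_\ell)^{F=1} = 0$ for smooth $X$ over $\F_q$, the rest is bookkeeping with profinite topologies and the structure of local unit groups.
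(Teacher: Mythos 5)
The paper does not give its own proof of this proposition: it is stated and cited directly from Weil~II \cite[Prop.~1.3.4]{WeilII}, so there is nothing in the text to compare your argument against. On its merits, your proof is correct and is essentially the standard argument that goes into the Weil~II result. The key input you identify --- that $H^1(X_{\bar\F},\Q_\ell)$ has weights $\ge 1$ for $X$ smooth, hence has no Frobenius fixed vectors --- is exactly the right one, and the reduction to that statement via the structure theorem $\sO_E^\times \cong T\times \Z_\ell^{[E:\Q_\ell]}$, the identification $\Hom_{\rm cts}(\pi_1(X_{\bar\F})^{ab},\Z_\ell)=H^1(X_{\bar\F},\Z_\ell)$, and the torsion-freeness of the latter, is all sound.

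Two small points worth tightening. First, in (b) the phrase ``combined with $V_0$ this yields a continuous extension $\pi_1(X)\to\sO_E^\times$'' compresses a genuine step: you need $V_0$ to be invariant under conjugation by \emph{all} of $\hat\Z$, not just the dense subgroup $\Z$. This follows because the conjugation action of $\hat\Z$ on $\pi_1(X_{\bar\F})^{ab}$ is continuous, $V_0$ is continuous with open kernel (by part (a)), and the stabilizer of $V_0$ in $\hat\Z$ is therefore a closed subgroup containing the dense subgroup $\Z$; once that is known, one uses a continuous splitting $\hat\Z\to\pi_1(X)$ of $\pi_1(X)\twoheadrightarrow\hat\Z$ (which exists since $\hat\Z$ is a free profinite group of rank one) and defines the extension on $\pi_1(X_{\bar\F})\rtimes\hat\Z$ by $(g_0,\hat n)\mapsto V_0(g_0)\cdot\tilde V_F(\hat n)$. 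Second, ``after enlarging $E$'' is not needed in the converse direction of (b): since $V$ already factors through $E^\times$, the hypothesis $V(F)\in\bar\Z_\ell^\times$ immediately gives $V(F)\in E^\times\cap\bar\Z_\ell^\times=\sO_E^\times$. Neither of these is a gap in substance, only in exposition.
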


\begin{prop}\label{weil.etale.gen}
For  $X$ connected an irreducible $V\in \dR_r(X)$ is \'etale
 if and only if its determinant $\det(V)$ is \'etale. In
 particular there is always a twist  $\chi \cdot V$ with $\chi\in \dR_1(\F_q)$ which is \'etale.
\end{prop}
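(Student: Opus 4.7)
The forward direction is immediate: the determinant of an \'etale representation is \'etale. For the ``in particular'' clause, given the main equivalence, it suffices to produce $\chi \in \dR_1(\F_q)$ with $\det(\chi \cdot V) = \chi^r \cdot \det V$ \'etale. Since $\det V$ is rank one, Proposition~\ref{weil.etale.curve} shows that it is \'etale as soon as its Frobenius value lies in $\bar\Z_\ell^\times$. Fixing a Frobenius lift $\phi$ and setting $v = v_\ell(\det V(\phi))$, I view $\dR_1(\F_q) = \Hom_{\mathrm{cont}}(W(\F_q), \Ql^\times)$ as $\Ql^\times$ via $\chi \mapsto \chi(\phi)$, and choose $\chi$ with $v_\ell(\chi(\phi)) = -v/r$. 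Then $\det(\chi \cdot V)(\phi) \in \bar\Z_\ell^\times$, so $\det(\chi \cdot V)$ is \'etale, and the main equivalence applied to the irreducible sheaf $\chi \cdot V$ finishes the claim.

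For the substantive direction, assume $V$ is irreducible and $\det V$ is \'etale. Fix a finite extension $E/\Q_\ell$ through which $V\colon W(X) \to \GL_r(E)$ factors, with uniformizer $\pi \in \sO_E$. Continuity together with the profiniteness of $\pi_1(X \otimes_{\F_q} \F)$ forces $V(\pi_1(X \otimes_{\F_q} \F))$ to be compact, so after a basis change $V(\pi_1(X \otimes_{\F_q} \F)) \subset \GL_r(\sO_E)$ and $L := \sO_E^r$ is a $\pi_1$-stable lattice. The plan is to produce a $\pi_1$-stable lattice $L_0 \subset E^r$ and a scalar $\mu \in \bar E^\times$ with $V(\phi) L_0 = \mu L_0$. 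Granting this, taking determinants on $\Lambda^r L_0$ gives $\det V(\phi) = \mu^r \cdot u$ with $u \in \sO_E^\times$; the hypothesis that $\det V$ is \'etale then forces $\mu \in \bar\Z_\ell^\times$, so $V(\phi) L_0 = L_0$. Since $W(X)$ is topologically generated by $\pi_1(X \otimes_{\F_q} \F)$ together with $\phi$, we conclude $V(W(X)) \subset \GL_r(\sO_{\bar E})$, i.e.\ $V$ is \'etale.

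Producing such $L_0$ and $\mu$ is the main obstacle, and this is where irreducibility of $V$ enters. Clifford theory decomposes $V|_{\pi_1(X \otimes_{\F_q} \F)} = \bigoplus_{j=1}^m W_j \otimes U_j$ into isotypic components with the $W_j$ pairwise non-isomorphic absolutely irreducible $\pi_1$-representations (after enlarging $E$) of equal multiplicity $n$, and the Weil-irreducibility of $V$ forces Frobenius to permute the $W_j$ transitively and cyclically. Each $W_j$ carries a unique $\pi_1$-stable lattice $L_{W_j}$ up to homothety (Schur's lemma), and for any $\sO_E$-lattice $M_j \subset U_j$ the tensor $L_{W_j} \otimes M_j$ is $\pi_1$-stable. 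Exploiting the freedom to rescale each $L_{W_j}$ and $M_j$, one arranges that $V(\phi)$ sends $L_{W_j} \otimes M_j$ isomorphically onto $L_{W_{j+1}} \otimes M_{j+1}$ for $j < m$ and onto $\mu \cdot (L_{W_1} \otimes M_1)$ for $j = m$, with a single scalar $\mu \in E^\times$ absorbing the cumulative discrepancy around the cycle. The lattice $L_0 := \bigoplus_j L_{W_j} \otimes M_j$ then satisfies $V(\phi) L_0 = \mu L_0$, as required. This is essentially the content of Weil~II~\cite[Prop.~1.3.6]{WeilII}.
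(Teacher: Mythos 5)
The paper gives no proof of this proposition --- it is recalled from Weil~II, \cite[Prop.~1.3.4]{WeilII} --- so there is no internal argument to compare against. Your overall strategy (produce a $\pi_1(X\otimes\F)$-stable lattice $L_0$ and a scalar $\mu$ with $V(\phi)L_0=\mu L_0$, then use the \'etale-determinant hypothesis to force $\mu\in\bar\Z_\ell^\times$) is indeed the content of Weil~II, and the easy direction plus the ``in particular'' clause are handled correctly.

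However, the construction of $L_0$ has a real gap. After decomposing $V|_{\pi_1}=\bigoplus_{j=1}^m W_j\otimes U_j$ into isotypic pieces with Frobenius permuting the $W_j$ cyclically, you claim that by rescaling the $L_{W_j}$ and $M_j$ one can arrange the cycle to close up to a single scalar $\mu$. For $j=1,\dots,m-1$ one can indeed take $M_{j+1}$ to be whatever lattice $V(\phi)(L_{W_j}\otimes M_j)$ forces. The problem is closing the loop at $j=m$: going once around, $V(\phi^m)|_{W_1\otimes U_1}=A\otimes B$ with $B\in\GL(U_1)$, and what you need is a lattice $M_1\subset U_1$ with $BM_1=cM_1$ for some scalar $c$. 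Such a lattice exists only if all eigenvalues of $B$ share the same $\ell$-adic valuation. That is \emph{not} automatic when $n=\dim U_1>1$, and the \'etale-determinant hypothesis only constrains $\det B$, not the individual valuations (compare $B=\mathrm{diag}(\ell,\ell^{-1})$, which has unit determinant but fixes no homothety class of lattices). So the phrase ``one arranges \ldots with a single scalar $\mu$ absorbing the cumulative discrepancy'' is asserting precisely the thing that needs justification, and for $n>1$ it is simply false as a general statement about lattices.

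The rescue is that for $V$ irreducible the multiplicity is automatically $n=1$, and you neither state nor use this. Indeed, if $\dim U_1\ge 2$, pick a $B$-eigenvector $u\in U_1$: then $\sum_{i=0}^{m-1}V(\phi^i)(W_1\otimes u)$ is a nonzero $W(X)$-stable subspace of dimension $\le m\cdot\dim W_1<r$, contradicting irreducibility. This multiplicity-one fact is exactly Proposition~\ref{structure} in the paper (from \cite[Prop.~5.3.9]{BBD}), which you could simply invoke. Once $n=1$ is in hand, ``closing the cycle'' reduces to solving the cyclic system $c_j\mu_j=\mu c_{j+1}$, which is done by taking $\mu$ to be an $m$-th root of $\prod_j\mu_j$ (after enlarging $E$), and the rest of your argument goes through. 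As written, though, the proof allows $n>1$ and the decisive step is unjustified.
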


\medskip

\subsection{Implications of Langlands}

In this section we recall some consequences of the  Langlands correspondence of Drinfeld
and Lafforgue \cite{Laf} for the theory of
$\ell$-adic sheaves.



The following theorem is shown in \cite[Th\'eor\`eme VII.6]{Laf}.

\begin{thm}\label{Lafforgue}
For $X\in \Sm_{\F_q}$  connected of dimension one and for $V\in \dR_r(X)$ irreducible with
determinant of
finite order the following holds: 
\begin{itemize}
\item[(i)] For an arbitrary, not necessarily continuous, automorphism
$\sigma\in {\rm Aut}(\bar \Q_\ell/\Q)$, there is a $V_\sigma \in \dR_r(X)$, called $\sigma$-companion, such that $$f_{V_\sigma} = \sigma (f_V), $$ where $\sigma$ acts
on the polynomial ring
$ \bar \Q_\ell[t]$ by $\sigma$ on $\bar \Q_\ell$ and 
 by $\sigma(t) = t$.
\item[(ii)] $V$ is pure of weight $0$.
\end{itemize}
\end{thm}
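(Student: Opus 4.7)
The plan is to derive both statements from Lafforgue's Langlands correspondence for $\mathrm{GL}_r$ over the function field $K=k(X)$ of the curve. Under the hypothesis that $V\in\dR_r(X)$ is irreducible with finite-order determinant, Langlands attaches to $V$ a cuspidal automorphic representation $\pi=\pi(V)$ of $\mathrm{GL}_r(\A_K)$ with central character of finite order, matched with $V$ by equality of unramified local $L$-factors. Concretely, at every closed point $x\in|X|$ of unramified ramification for $V$, the Satake parameters of the local component $\pi_x$ are precisely the Frobenius eigenvalues of $V_{\bar x}$, so the characteristic polynomial $f_V(x)$ is recovered from $\pi_x$.

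For (i), the key additional input provided by Lafforgue's construction, which realizes $V$ in the cohomology of compactified moduli of shtukas, is that the unramified Hecke eigenvalues of $\pi$ are algebraic, lying in $\bar\Q\subset\bar\Q_\ell$ (rather than merely in $\bar\Q_\ell$). Given $\sigma\in\mathrm{Aut}(\bar\Q_\ell/\Q)$, one may then apply $\sigma$ termwise to the coefficients of each $f_V(x)$ to produce a new system of algebraic polynomials $(\sigma f_V(x))_{x\in|X|}$; fixing an embedding $\bar\Q\hookrightarrow\C$ via an abstract isomorphism $\bar\Q_\ell\cong\C$ identifies this system with the Satake parameters of another cuspidal automorphic representation $\sigma\pi$, again with finite-order central character. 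Langlands applied in reverse to $\sigma\pi$ yields an irreducible $V_\sigma\in\dR_r(X)$ whose Frobenius polynomials at the unramified closed points are $\sigma f_V(x)$. Because the characteristic polynomial function $f_{V_\sigma}:|X|\to\bar\Q_\ell[t]$ determines $V_\sigma$ by Proposition~\ref{bas.charpoly}, and unramified points of any fixed degree form a Zariski-dense set where equality is known, the identity $f_{V_\sigma}=\sigma f_V$ propagates to all of $|X|$ after filling in the finitely many ramified points by a standard density and semi-continuity argument.

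For (ii), the same correspondence transports the Ramanujan--Petersson property of cuspidal automorphic representations of $\mathrm{GL}_r$ over function fields, also established by Lafforgue, to the sheaf side: the Satake parameters of $\pi_x$ at unramified $x$ are tempered, i.e.\ each has complex absolute value $1$ under every embedding $\bar\Q\hookrightarrow\C$. Translated through the correspondence, the Frobenius eigenvalues of $V_{\bar x}$ are $q_x$-Weil numbers of weight zero, which is the definition of $V$ being pure of weight $0$.

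The real obstacle in both parts is of course not the formal deduction above but the two deep inputs it invokes: the algebraicity of Hecke eigenvalues and the temperedness of local components of cusp forms for $\mathrm{GL}_r$ over $K$. Both are achieved simultaneously by Lafforgue through a delicate study of the $\ell$-adic cohomology of compactifications of moduli stacks of shtukas, combined with Deligne's Weil~II purity results and an inductive elimination of non-cuspidal contributions via Drinfeld's partial Frobenius operators. In a survey of the present scope, the honest strategy is to take Theorem~\ref{Lafforgue} as a black box quoted from \cite{Laf} rather than reprove its two ingredients.
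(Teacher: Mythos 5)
The paper offers no proof of this statement at all: it simply cites Lafforgue, ``The following theorem is shown in [Laf, Th\'eor\`eme VII.6],'' which is exactly the black-box conclusion you arrive at. So the approach is the same. One small correction to your expository sketch of (i): since $V$ is lisse on all of $X$, every closed point $x\in |X|$ is an unramified place for $V$ and for the matched automorphic representation $\pi$; the ramification lives only at $\bar X\setminus X$, which is outside the domain $|X|$ on which $f_V$ is defined. Hence the Satake-parameter/Frobenius-eigenvalue identification already holds at every $x\in |X|$, and the ``fill in the finitely many ramified points by a density and semi-continuity argument'' step you append at the end of (i) is vacuous and can be dropped; the $\sigma$-companion equation $f_{V_\sigma}=\sigma(f_V)$ is immediate on all of $|X|$ once one has $\sigma\pi$ and applies the correspondence in reverse.
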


\medskip

Later,  we deduce from the theorem that $\sigma$-companions exist for
arbitrary $V\in \dR_r(X)$ in dimension one, not necessarily of finite determinant, see Corollary~\ref{corocompan}.

For  $\dim(X)$ arbitrary and $V\in \dR_1(X)$, which we consider as a continuous homomorphism $V:W(X)\to \Ql^\times$, the
$\sigma$-companion $V_\sigma$ simply corresponds to the continuous map $\sigma \circ V: W(X)
\to \Ql^\times$. In fact $\sigma \circ V$ is continuous, because $W(X)/ \ker(V)$ is
discrete by Proposition~\ref{weil.etale.curve}.


From Lafforgue's theorem one can deduce certain results on higher dimensional schemes.

\begin{cor}  \label{corodetweight}
Let $X$ be a connected scheme in $\Sm_{\F_q}$ of arbitrary dimension.
For an irreducible $V\in \dR_r(X)$ the
following are
equivalent:
\begin{itemize}
\item[(i)] $V$ is pure of weight $0$,
\item[(ii)] there is a closed point $x\in X$ such that $V_{\bar x}$ is pure of
weight $0$,
\item[(iii)] there is  $\chi\in \dR_1(\F_q)$ pure of weight $0$ such that the determinant 
$\det( \chi \cdot V )$ is of finite order.
\end{itemize}
\end{cor}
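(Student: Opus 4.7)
I propose to close the triangle $(i) \Rightarrow (ii) \Rightarrow (iii) \Rightarrow (i)$. The implication $(i) \Rightarrow (ii)$ is immediate, since purity at every closed point in particular gives purity at one (and $X$ has closed points because it is of finite type and non-empty).

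For $(ii) \Rightarrow (iii)$ the plan is to exploit the fact that $\det V\in \dR_1(X)$ has essentially unramified behavior. By Proposition~\ref{weil.etale.curve} applied to $\det V$, its restriction to $\pi_1(X\otimes_{\F_q}\F)$ has finite image; let $N$ be its order. Then $(\det V)^N$ is trivial on the geometric fundamental group and descends to a character of the quotient $W(\F_q)=\Z$, entirely determined by the scalar $\gamma := (\det V)^N(\tilde F)\in \Ql^\times$ for any lift $\tilde F$ of Frobenius. For any closed point $y$ one then has $(\det V)(F_y)^N = \gamma^{\deg(y)}$. The hypothesis that $V_{\bar x}$ is pure of weight $0$ at the given point $x$ forces $|\det V(F_x)| = 1$ (the determinant of a Frobenius acting on a pure weight-$0$ space is a Weil number of weight $0$), and therefore $|\gamma|=1$. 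Now I pick $\chi\in \dR_1(\F_q)$ determined by $\chi(F)=\alpha$ for an $(rN)$-th root $\alpha$ of $\gamma^{-1}$; since $|\alpha|=1$ the character $\chi$ is pure of weight $0$. A direct check then gives $\det(\chi\cdot V)^N = \chi^{rN}\cdot (\det V)^N$, which is trivial on $\pi_1(X\otimes_{\F_q}\F)$ and sends $\tilde F$ to $\alpha^{rN}\gamma = 1$; hence $\det(\chi\cdot V)$ has order dividing $N$.

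For $(iii)\Rightarrow (i)$, since $\chi$ is pure of weight $0$, $V$ is pure of weight $0$ if and only if $\chi\cdot V$ is, so I may absorb $\chi$ and assume that $\det V$ itself has finite order; I then need to show that $V$ is pure of weight $0$. Here the key input is the theorem (Deligne, Weil II, relying in higher dimension on Lafforgue's Theorem~\ref{Lafforgue} via restriction to curves) that an irreducible lisse $\Ql$-Weil sheaf on a normal connected $\F_q$-scheme is pure of some real weight $w$: at every closed point $y$ all eigenvalues of $F_y$ on $V_{\bar y}$ have absolute value $q^{w\deg(y)/2}$. Taking determinants, $\det V$ is pure of weight $rw$; but a character of finite order is pure of weight $0$, so $rw=0$ and $w=0$, i.e.\ $V$ is pure of weight $0$.

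The only nontrivial ingredient is this purity dichotomy for irreducible sheaves on normal connected varieties, which in dimension one is Theorem~\ref{Lafforgue}(ii) (after the twisting argument used for $(ii)\Rightarrow(iii)$ to arrange finite determinant), and in higher dimension is obtained by restricting to curves and using the incompressibility of irreducibility under the global monodromy to force a single common weight. Everything else is bookkeeping with the exact sequence $0 \to \pi_1(X\otimes_{\F_q}\F) \to W(X) \to W(\F_q) \to 0$ and extracting roots of weight-$0$ scalars in $\Ql^\times$.
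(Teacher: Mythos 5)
Your proposal is correct, and the implications $(i)\Rightarrow(ii)$ and $(ii)\Rightarrow(iii)$ match the paper's proof in substance (the paper is terser in $(ii)\Rightarrow(iii)$, simply choosing $\chi$ with $(\chi|_{k(x)})^{\otimes r}=\det(V_{\bar x})^\vee$ and invoking Proposition~\ref{weil.etale.curve}; your explicit analysis of $W(X)\to W(\F_q)$ and the root extraction fills in exactly the same content). Where you diverge is in $(iii)\Rightarrow(i)$: you cite, as a black box, the general theorem that an irreducible lisse Weil sheaf on a normal connected $\F_q$-scheme is pure of some real weight $w$, and then pin down $w=0$ by the clean observation that $\det V$ is pure of weight $rw$ but has finite order. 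The paper instead avoids invoking that general purity statement: for each closed point $x$ it uses Proposition~\ref{propcurveex} to produce a curve $\phi:C\to X$ through $x$ with $\phi^*V$ irreducible (after absorbing $\chi$, so $\det\phi^*V$ is of finite order), and applies Theorem~\ref{Lafforgue}(ii) directly to get weight $0$ at $x$; since $x$ is arbitrary, $V$ is pure of weight $0$ without ever needing to know a priori that the local weights agree. Both arguments rest on the same two ingredients (the curve lemma and Lafforgue on curves), so neither is logically more general, but the paper's version is more self-contained: your determinant argument, while elegant, is actually a detour, since under the hypothesis that $\det V$ has finite order, the curve restriction already returns weight $0$ directly rather than an unknown $w$. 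Your presentation has the virtue of isolating the purity-of-some-weight statement as a reusable fact, at the cost of appealing to a stronger theorem than what is strictly needed here.
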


\begin{proof}
(iii) $\Rightarrow$ (i):\\
For a closed point $x\in X$
choose a curve $C/k$ and a morphism $\phi:C\to X$ such that $x$ is in the set
theoretic image of $\phi$ and such that $\phi^* V$ is irreducible. 
A proof of the existence of such a curve is given in an appendix, 
Proposition \ref{propcurveex}.
Then by Theorem~\ref{Lafforgue} the sheaf $\phi^* V$ is pure of weight $0$ on $C$, so
$V_{\bar x}$ is also pure of weight $0$.

\smallskip

 (i) $\Rightarrow$ (ii): Trivially. 

\smallskip

(ii) $\Rightarrow$ (iii):\\
Choose $\chi\in \dR_1(\F_q)$ such that
$(\chi|_{k(x)})^{\otimes r} = \det (V_{\bar x})^\vee$. 
By Proposition~\ref{weil.etale.curve}
it follows that the determinant
$\det(\chi \cdot V ) $ has finite order.
\end{proof}

Let $\mathcal W$ be the quotient of $\bar \Q_\ell^\times$ modulo the numbers of weight $0$ in
the sense of \cite[Def.~1.2.1]{WeilII} (algebraic numbers all complex conjugates of which have absolute value $1$).

\begin{cor}\label{corwdec}
A sheaf $V\in \dR_r(X)$, resp.   a 2-skeleton  sheaf $V\in \dV_r(X)$, can be decomposed uniquely as a sum
\[
V= \bigoplus_{w\in\mathcal W} V_w
\]
with the property that  $V_w\in \dR(X)$, resp.\ $V_w\in \dV(X)$,  such that for each point $x\in |X|$, all eigenvalues of the Frobenius $F_x$ on $V_w$
lie in the class $w$.
\end{cor}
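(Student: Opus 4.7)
My plan exploits semi-simplicity and Lafforgue's purity theorem. Since $V \in \dR_r(X)$ is considered modulo semi-simplification, I would first decompose $V = \bigoplus_{i} V_i$ as a finite sum of irreducibles; it then suffices to attach to each $V_i$ a single class $w_i \in \mathcal{W}$ satisfying the eigenvalue condition and to set $V_w := \bigoplus_{w_i = w} V_i$.

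To attach $w_i$ to an irreducible $V_i$, I would choose via Proposition~\ref{propcurveex} a morphism $\phi \colon C \to X$ from a smooth curve $C$ with $\phi^* V_i$ still irreducible. On $C$ I would twist $\phi^* V_i$ by a rank-one Weil sheaf to make the determinant of finite order; Theorem~\ref{Lafforgue}(ii) then gives that the twist is pure of weight $0$, so all Frobenius eigenvalues at closed points of $C$ are weight-$0$ numbers. Untwisting, the eigenvalues of $\phi^* V_i$ at each $c \in |C|$ all lie in a single coset of the weight-$0$ numbers in $\bar\Q_\ell^\times$, and, interpreted with the $\deg(c)$-normalization inherent in the Weil~II notion of weight, this coset determines a single class $w_i \in \mathcal{W}$ independent of $c$. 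Independence of $w_i$ from the choice of $C$ follows from Corollary~\ref{corodetweight} together with torsion-freeness of $\mathcal{W}$ (immediate from the definition via absolute values). For the 2-skeleton case $V \in \dV_r(X)$, I would apply the curve-wise construction to each $V_C$ with $C \in \Cu(X)$; the compatibility condition in the definition of a 2-skeleton sheaf guarantees the curve-wise decompositions glue to a global decomposition valued in $\dV$. Uniqueness in both cases is immediate, since $V_w$ is characterised intrinsically by its eigenvalue-class condition and eigenvalues in distinct classes cannot interact.

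The main obstacle I expect is the bookkeeping around the $\deg(x)$-dependence of Frobenius eigenvalues under twists by $\dR_1(\F_q)$: a character $\chi$ with $\chi(F_1) = \alpha$ multiplies eigenvalues at $x$ by $\alpha^{\deg(x)}$, so the statement that eigenvalues on $V_w$ lie in the class $w$ has to be read with the normalization $[\alpha^{\deg(x)}]^{1/\deg(x)} = [\alpha]$, matching the convention for weight in Weil~II. Keeping this convention consistent when propagating information between an auxiliary curve $C \to X$ and the ambient scheme $X$ is the technical heart of the argument.
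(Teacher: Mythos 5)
The paper states Corollary~\ref{corwdec} without proof, so there is no ``paper route'' to compare against; your argument is the natural one and I believe it is correct in outline, with one step that deserves tightening.

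The step I would flag is the claim that independence of $w_i$ from the auxiliary curve ``follows from Corollary~\ref{corodetweight} together with torsion-freeness of $\mathcal{W}$.'' As written you twist $\phi^*V_i$ on the chosen curve $C$; that produces a class $w_i(C)$ but does not by itself compare two curves $C,C'$. To close the gap cleanly, twist on $X$ rather than on $C$: choose $\chi\in\dR_1(\F_q)$ with $\det(\chi\cdot V_i)$ of finite order (always possible, as in Proposition~\ref{weil.etale.gen}). For any curve $C\to X$ with $\phi^*V_i$ irreducible (Proposition~\ref{propcurveex}), the pullback $\phi^*(\chi\cdot V_i)$ is irreducible with determinant of finite order, hence pure of weight $0$ by Theorem~\ref{Lafforgue}(ii). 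Therefore the eigenvalues of $F_x$ on $V_i$, at any closed point $x$ of any such curve, lie in the coset of $\chi(F)^{-\deg(x)}$, and the $\deg(x)$-normalized class $[\chi(F)^{-1}]\in\mathcal{W}$ is manifestly independent of both $x$ and $C$. This in fact re-proves the relevant implication of Corollary~\ref{corodetweight} and avoids having to argue that the twisting character is itself pure of weight $0$. (Alternatively, your appeal to \ref{corodetweight} can be made precise: pick $\chi$ with $[\chi(F)]=w_i^{-1}$, note $\chi\cdot V_i$ is pure of weight $0$ at a point of $C$, invoke \mbox{(ii)$\Rightarrow$(i)} of \ref{corodetweight}, and use torsion-freeness of $\mathcal{W}$ to recover $w_i$ unambiguously from $w_i^{\deg(x)}$; divisibility of $\mathcal{W}$, inherited from $\Ql^\times$, is also needed so the normalized class exists at all.) Your treatment of the 2-skeleton case is fine: the weight decomposition is characterized pointwise by Frobenius eigenvalue classes, so it commutes with restriction to $(C\times_X C')_{\mathrm{red}}$ and the family $((V_C)_w)_C$ is again compatible. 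The uniqueness argument is also fine once one notes $V_w$ must be the sum of those irreducible constituents of $V$ whose class is $w$, which is forced by the eigenvalue condition and the fact that distinct classes give disjoint eigenvalue sets at every point.
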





\medskip

\begin{cor}  \label{corocompan}
Assume $\dim(X)=1$.
For  $V\in \dR_r(X)$ and
an automorphism $\sigma\in {\rm
Aut}(\bar \Q_\ell/\Q) $, there is a $\sigma$-companion to $V$,
i.e.\ $V_\sigma \in \dR_r(X)$  such that $$f_{V_\sigma} =
\sigma (f_V)
.$$
\end{cor}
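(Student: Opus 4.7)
The plan is to reduce to Theorem~\ref{Lafforgue}(i), which has the extra hypotheses that $V$ be irreducible and of finite-order determinant. Since $\dR_r(X)$ is defined up to semi-simplification, I first write $V = \bigoplus_i V_i$ as a direct sum of irreducibles $V_i \in \dR_{r_i}(X)$ with $\sum r_i = r$. Since $f_V = \prod_i f_{V_i}$ and $\sigma$ acts polynomially, it suffices to construct a $\sigma$-companion for each $V_i$ separately and take the direct sum. So I reduce to $V$ irreducible.

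Next I remove the finite-determinant hypothesis by twisting with a character $\chi \in \dR_1(\F_q)$. By Proposition~\ref{weil.etale.curve}, the geometric monodromy of the rank-one character $\det(V) : W(X) \to \Ql^\times$ is finite, of some order $N$, so $\det(V)^N$ is trivial on $\pi_1(X \otimes_{\F_q} \F)$ and descends to an element $\chi_1 \in \dR_1(\F_q) \cong \Ql^\times$. Since $\Ql^\times$ is divisible, I choose $\chi \in \dR_1(\F_q)$ satisfying $\chi^{rN} = \chi_1^{-1}$. Then $\chi \cdot V$ is still irreducible (twisting by a rank-one character preserves irreducibility), and
$$\det(\chi \cdot V)^N = \chi^{rN} \cdot \det(V)^N = \chi_1^{-1} \cdot \chi_1 = 1,$$
so $\chi \cdot V$ has determinant of order dividing $N$. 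Theorem~\ref{Lafforgue}(i) now produces a $\sigma$-companion $(\chi \cdot V)_\sigma \in \dR_r(X)$.

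For the rank-one sheaf $\chi$ itself, its $\sigma$-companion $\chi_\sigma \in \dR_1(\F_q)$ exists tautologically: $\dR_1(\F_q) \cong \Ql^\times$ and $\sigma$ acts on this group, giving $\chi_\sigma(F_x) = \sigma(\chi(F_x))$ at every closed point $x$. I then define
$$V_\sigma := \chi_\sigma^{-1} \cdot (\chi \cdot V)_\sigma \ \in \ \dR_r(X).$$
A direct eigenvalue check finishes the proof: if $F_x$ has eigenvalues $\alpha_1, \dots, \alpha_r$ on $V_{\bar x}$, then on $(\chi \cdot V)_{\bar x}$ they are $\chi(F_x)\alpha_i$, on $((\chi \cdot V)_\sigma)_{\bar x}$ they are $\sigma(\chi(F_x)\alpha_i) = \chi_\sigma(F_x) \sigma(\alpha_i)$, and after dividing out $\chi_\sigma$ one recovers $\sigma(\alpha_i)$, giving $f_{V_\sigma}(x) = \sigma(f_V(x))$.

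There is no serious obstacle: the argument is the combination of a semi-simplification reduction with a twisting trick powered by the divisibility of $\Ql^\times$. The only point that needs care is to ensure that the twist $\chi$ chosen to force finite-order determinant (so Lafforgue applies) and its untwist $\chi_\sigma^{-1}$ applied afterwards are truly inverse to one another on Frobenius characteristic polynomials at every closed point, which is the elementary computation above.
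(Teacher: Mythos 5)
Your proof is correct and follows essentially the same route as the paper: reduce to the irreducible case, twist by a character $\chi\in\dR_1(\F_q)$ so that $\det(\chi\cdot V)$ has finite order, apply Theorem~\ref{Lafforgue}(i) to get a companion of $\chi\cdot V$, and un-twist by the companion $\chi_\sigma$. The only cosmetic difference is in how $\chi$ is chosen: the paper (via Corollary~\ref{corodetweight}(ii)$\Rightarrow$(iii)) picks $\chi$ so that $\det(\chi\cdot V)$ is trivial at one closed point and then invokes Proposition~\ref{weil.etale.curve} to conclude finite order, whereas you solve $\chi^{rN}=\chi_1^{-1}$ directly using divisibility of $\Ql^\times$; both work and amount to the same idea.
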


\begin{proof}
Without loss of generality  we may assume that  $V$ is irreducible. 
In the same way as in the proof of Corollary~\ref{corodetweight} we find
$\chi\in \dR_1(\F_q)$
 such that
$\chi \cdot V$ has determinant of finite order.
A $\sigma$-companion of $\chi \cdot V$ exists by Theorem~\ref{Lafforgue} and a
$\sigma$-companion of $\chi$ exists by the remarks below Theorem~\ref{Lafforgue}.
As the formation of $\sigma$-companions is compatible with tensor products, $V_\sigma =
(V\cdot \chi)_\sigma \cdot (\chi_\sigma)^\vee $ is a $\sigma$-companion of $V$.
\end{proof}

Deligne showed a compatibility result \cite[Thm.~9.8]{Delepsilon}  for the Swan conductor of $\sigma$-companions.

\begin{prop}\label{ramcompatible}
Let $V$ and $V_\sigma$ be $\sigma$-companions on a one-dimensional $X\in \Sm_{\F_q}$ as in as in Corollary~\ref{corocompan}. Then $\Sw(V)= \Sw(V_\sigma)$.
\end{prop}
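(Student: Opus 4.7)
The plan is to reduce from a global comparison of Swan divisors to a pointwise one, match global Euler characteristics via the trace formula, and then invoke the local $\epsilon$-factor theory to separate individual contributions.

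First, I would reduce to the irreducible case. Since Swan conductors are additive under direct sums by \eqref{Swprop1}, and since $V_\sigma$ inherits irreducibility from $V$---a nontrivial splitting $V_\sigma = W_1 \oplus W_2$ would yield $f_V = \sigma^{-1}(f_{W_1}) \sigma^{-1}(f_{W_2})$, and Chebotarev \cite[Thm.~1.1.2]{Laumon}, combined with the existence of $\sigma^{-1}$-companions of $W_1, W_2$ (Corollary~\ref{corocompan}), would force a corresponding splitting of $V$---it suffices to prove $\Sw_x(V) = \Sw_x(V_\sigma)$ at each $x \in |\bar X| \setminus |X|$ under the assumption that both sheaves are irreducible and lisse on the given $X$.

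Second, the trace formula matches the \emph{total} Swan contribution at infinity. The Lefschetz trace formula expresses $\chi_c(X \otimes_{\F_q} \F, V)$ as an alternating sum of traces of powers of Frobenius acting on the $H^i_c$, and by Grothendieck the same quantity equals $\sum_n (-1)^n \sum_{x \in X(\F_{q^n})} t^n_V(x)$ suitably regularized; this is an integer determined by the values $t^n_V$ and hence only by $f_V$. Since $f_{V_\sigma} = \sigma(f_V)$ and $\sigma$ fixes $\Z$, one concludes $\chi_c(X \otimes_{\F_q} \F, V) = \chi_c(X \otimes_{\F_q} \F, V_\sigma)$, and the Grothendieck--Ogg--Shafarevich formula (cf.\ the proof of Proposition~\ref{dimestimate}) then yields
\[
\sum_{x \in |\bar X| \setminus |X|} \Sw_x(V) \;=\; \sum_{x \in |\bar X| \setminus |X|} \Sw_x(V_\sigma).
\]

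Third, to separate the individual contributions I would appeal to the product formula of Laumon for $\epsilon$-factors: globally one has $\epsilon(\bar X, V) = q^N \prod_x \epsilon_x(V, \omega)$, where each local factor $\epsilon_x$ has $q$-adic valuation determined by $\rank(V)$, $\Sw_x(V)$, and the order of the chosen meromorphic differential $\omega$ at $x$. Deligne's theorem \cite[Thm.~9.8]{Delepsilon} asserts exactly the pointwise Galois-equivariance $\epsilon_x(V_\sigma, \omega) = \sigma(\epsilon_x(V, \omega))$, and since $\sigma$ preserves the $q$-adic valuation on integer powers of $q$, the pointwise equality $\Sw_x(V) = \Sw_x(V_\sigma)$ follows.

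The main obstacle is precisely this last step. Grothendieck--Ogg--Shafarevich only constrains \emph{sums} of Swan conductors over complements of open subsets of $\bar X$, and no amount of shrinking $X$ or tensoring with auxiliary rank-one sheaves will, by elementary bookkeeping alone, isolate a single $\Sw_x$: one really needs that each local $\epsilon_x$ is a $\sigma$-equivariant invariant of the local Weil--Deligne representation of $V$ at $x$. This local $\sigma$-equivariance is the technical core of Deligne's \emph{Les constantes des équations fonctionnelles des fonctions L}, and it is this input that the proof must import rather than reprove.
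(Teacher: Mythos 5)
The paper gives no proof of Proposition~\ref{ramcompatible}: it simply cites Deligne's \cite[Thm.~9.8]{Delepsilon} as the reference. Your proposal is therefore a reconstruction, and a comparison is necessarily with what Deligne actually does rather than with anything in this text.

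Your steps 1 and 2 are correct and are the right start: reduce to $V$ irreducible (whence $V_\sigma$ irreducible, by the Chebotarev argument you give), then observe that $\chi_c(X\otimes_{\F_q}\F, V)$ is read off from the degree of the L-function as a rational function, hence is unchanged under applying $\sigma$ to coefficients, so that Grothendieck--Ogg--Shafarevich gives
\[
\sum_{x\in |\bar X|\setminus|X|} \Sw_x(V) \;=\; \sum_{x\in |\bar X|\setminus|X|} \Sw_x(V_\sigma).
\]

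However, your step 3 is where you go wrong, and precisely at the point where you assert a limitation: the claim that \emph{``no amount of shrinking $X$ or tensoring with auxiliary rank-one sheaves will, by elementary bookkeeping alone, isolate a single $\Sw_x$''} is not correct, and in fact the rank-one twisting trick is (a version of) how Deligne's theorem is proved. The key observation is that for rank-one sheaves $\chi$ the statement is elementary: the paper notes that $\chi_\sigma$ is just the composite $\sigma\circ\chi$, which has the \emph{same kernel} as $\chi$, and hence \emph{the same Swan conductor at every place}. Now fix a boundary point $x_1$ and let $S = \sum_x \Sw_x(V) = \sum_x \Sw_x(V_\sigma)$. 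Pick a rank-one $\chi$ lisse on $X$ with a single break $s>S$ at $x_1$ and unramified at the remaining boundary points (such characters exist by Artin--Schreier--Witt theory or class field theory). Since all breaks of $V$ and of $V_\sigma$ at $x_1$ are $\le S < s$, the standard formula for the Swan conductor of a tensor product with a rank-one sheaf of strictly larger break gives $\Sw_{x_1}(V\otimes\chi)=\Sw_{x_1}(V_\sigma\otimes\chi_\sigma)=rs$, while the Swan conductors at the remaining boundary points are unchanged. Compatibility of companion formation with tensor products makes $V_\sigma\otimes\chi_\sigma$ the $\sigma$-companion of $V\otimes\chi$, so applying your step 2 to this twisted pair and subtracting the equality for the original pair isolates $\Sw_{x_1}(V)=\Sw_{x_1}(V_\sigma)$. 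No local $\epsilon$-factor theory is needed; in particular, you should not commit to the paraphrase that \cite[Thm.~9.8]{Delepsilon} is the statement of pointwise Galois-equivariance of local $\epsilon$-factors --- that is not the input the argument requires.
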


\medskip

Recall from \eqref{fun.inj} that there is a canonical injective map  of sets  $\dV_r(X)\xrightarrow{\kappa}
\sL_r(X)(\Ql) $. In the following corollary we use the notation of Section~\ref{sec.gloram}.

\begin{cor}\label{vir.compan}
For $X\in \Sm_{\F_q}$ and an effective Cartier divisor $D\in \Div^+(\bar X)$ with support
in $\bar X \setminus X$  the action of ${\rm Aut}(\Ql /\Q)$ on $\sL_r(X)(\Ql)$ stabilizes
$\alpha(\dV_r(X))$ and $\alpha(\dV_r(X,D))$.
\end{cor}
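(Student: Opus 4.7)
My plan is to produce, for each $V \in \dV_r(X)$ and each $\sigma \in {\rm Aut}(\Ql/\Q)$, a 2-skeleton sheaf $V_\sigma \in \dV_r(X)$ satisfying $\kappa(V_\sigma) = \sigma(\kappa(V))$, and then verify that the bounded ramification condition is preserved. Writing $V = (V_C)_{C \in \Cu(X)}$, I construct the candidate curve-by-curve by setting $(V_\sigma)_C := (V_C)_\sigma$, where the right-hand side is the $\sigma$-companion on the smooth curve $C$ supplied by Corollary \ref{corocompan}. By the defining property of $\sigma$-companions, $f_{(V_\sigma)_C}(x) = \sigma(f_{V_C}(x))$ at each closed point $x \in |C|$, so once the family is shown to be a 2-skeleton sheaf, the identity $\kappa(V_\sigma) = \sigma(\kappa(V))$ is automatic and the stabilization of $\kappa(\dV_r(X))$ follows.

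The central step is to verify compatibility of the family $((V_C)_\sigma)_{C \in \Cu(X)}$. Fix $C, C' \in \Cu(X)$ and set $Z := (C \times_X C')_{\mathrm{red}}$. At every closed point $y$ of $Z$, compatibility of the original family gives agreement of the Frobenius characteristic polynomials of $V_C|_Z$ and $V_{C'}|_Z$ at $y$; applying $\sigma$ coefficient-wise yields the same equality for $(V_C)_\sigma|_Z$ and $(V_{C'})_\sigma|_Z$. Since both restrictions are semi-simple, the Chebotarev-based injectivity of Proposition \ref{bas.charpoly}, applied on each connected component of $Z$, forces the two restrictions to coincide, which is precisely the required compatibility.

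For the bounded version, suppose in addition $V \in \dV_r(X,D)$. Then for each $C \in \Cu(X)$ with smooth compactification $\bar C$ and extension $\bar\phi : \bar C \to \bar X$ we have $\Sw(V_C) \le \bar\phi^*(D)$ on $\bar C$. By Proposition \ref{ramcompatible}, the Swan conductor at each closed point of $\bar C$ is invariant under $\sigma$-companions, hence $\Sw((V_C)_\sigma) = \Sw(V_C) \le \bar\phi^*(D)$, so $V_\sigma \in \dV_r(X,D)$. This shows that $\kappa(\dV_r(X,D))$ is stable under $\sigma$ as well.

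The main obstacle is the compatibility step: the $\sigma$-companion construction on a curve is not a priori functorial in the curve, so there is no formal reason why the companions on $C$ and $C'$ should restrict compatibly to $Z$ (which may be singular or zero-dimensional). What rescues the argument is Proposition \ref{bas.charpoly}: a semi-simple lisse sheaf is determined by its Frobenius characteristic polynomials, and applying $\sigma$ to polynomials commutes tautologically with restriction, so compatibility of the companions is inherited directly from compatibility of the original family.
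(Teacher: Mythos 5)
Your proof is correct and is essentially the argument the paper intends: the corollary is stated without explicit proof immediately after Corollary~\ref{corocompan} and Proposition~\ref{ramcompatible}, precisely because the curve-wise construction of $\sigma$-companions, the reduction of compatibility to equality of Frobenius characteristic polynomials via Proposition~\ref{bas.charpoly}, and the invariance of the Swan conductor under companions are exactly the three ingredients one is meant to assemble. You have also correctly identified and defused the one non-formal point, namely that $\sigma$-companions on curves are not constructed functorially, so compatibility over $(C\times_X C')_{\mathrm{red}}$ must be rechecked; your Chebotarev-based argument on each connected component of the (possibly zero-dimensional) intersection is the right way to do this, since it reduces everything to the tautological commutation of $\sigma$ with characteristic polynomials and with the finite-extension maps $\psi_d$.
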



\begin{rmk}
Drinfeld has shown \cite{Drinfeld} that Corollary~\ref{corocompan} remains true for higher
dimensional $X\in \Sm_{\F_q}$. His argument
relies on Deligne's Theorem~\ref{mainthmirr}.
\end{rmk}

\smallskip







\subsection{Proof of Thm.\ \ref{thm.finite1} $(\dim =1)$}\label{sec.prdim1}

Theorem~\ref{thm.finite1} for one-dimensional schemes is a well-known consequence of
Lafforgue's Langlands correspondence  for ${\rm GL}_r$ \cite{Laf}. Let $X\in \Sm_{\F_q}$ be
of dimension one with smooth compactification $\bar X$, $L=k(X)$.  The Langlands
correspondence says that there is a natural bijective equivalence between cuspidal
automorphic irreducible representations $\pi$ of ${\rm GL}_r(\mathbb A_L)$ (with values in
$\Ql$)  and
continuous irreducible representations of the Weil group $\sigma_\pi:W(L) \to {\GL}_r(\Ql)$,
which are unramified almost everywhere. For such an automorphic $\pi$ one defines an (Artin)
conductor ${\rm Ar}(\pi)\in \Div^+(X)$ and one constructs an open compact subgroup $K\subset
{\rm GL}_r(\mathbb A_L)$
depending only on ${\rm Ar}(\pi)$ such that the space of $K$ invariant vectors of $\pi$ has
dimension one, see \cite{JPS}. 

The divisor ${\rm Ar}(\pi)$ has support in $\bar X \setminus X$ if and only if
 $\sigma_\pi$ is unramified over $X$. Moreover $$\Sw_x(\sigma_\pi) +r \ge {\rm Ar}_x
 (\pi)  $$ for $x\in |\bar X|$. 

For an arbitrary compact open subgroup $K \subset
{\rm GL}_r(\mathbb A_L)$ the number of cuspidal automorphic irreducible representations
$\pi$ with fixed central character and which have a non-trivial $K$-invariant vector is
finite by work of Harder, Gelfand and Piatetski-Shapiro, see
\cite[Thm.~9.2.14]{LaumonDrinfeld}.

Via the Langlands correspondence this implies that for given $D\in \Div^+(\bar X)$  with
support in $\bar X \setminus X$ and for given $W\in \dR_1(X)$  the number of irreducible $V\in
\dR_r(X)$ with $\det(V)=W$ and  with $\Sw(V)\le D$ is finite. Recall that the determinant
of $\sigma_\pi$  corresponds to the central character of $\pi$ via class field theory. 

\medskip
As written by Deligne in an email to us dated July 30, 2012, 
one can also use the quasi-orthogonality relations from Claim  \ref{orth}  and  a sphere packing argument to conclude, but Claim \ref{orth}  relies on purity, which again comes from the Langlands correspondence on curves, so we do not make the argument explicit.

\subsection{Structure of a lisse $\bar{\Q}_\ell$-sheaf over a scheme
over a finite field. }

\noindent 
Let the notation be as above.
The following proposition is shown in \cite[Prop.\ 5.3.9]{BBD}.

\begin{prop}\label{structure}
Let $V$ be irreducible  in $\dR_r(X)$. 
\begin{itemize}
\item[(i)] Let $m$ be the number of irreducible constituents of $V_{\F}$. There
is a unique irreducible  $V^\flat\in \dR_{r/m}(X_{\F_{q^m}})$ such that
\begin{itemize}
\item the pullback of $V^\flat$ to $X\otimes_{\F_q} \F$ is irreducible,
\item $V= b_{m,*} V^\flat $, where $b_{m}$ is the natural map $ X\otimes_{\F_q}
\F_{q^{m}}\to X$.
\end{itemize}
\item[(ii)] $V$ is pure of weight $0$ if and only if  $V^\flat$ is pure of
weight $0$.
\item[(iii)] If $V'\in \dR_r(X)$ is another sheaf
on
$X$ with $V'_{\F} = V_\F$, then there is a unique  sheaf $W\in \dR_1(\F_{q^m})$
  with $$V' = b_{m,*}(V^\flat\otimes W) .$$
\end{itemize}
\end{prop}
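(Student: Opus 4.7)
I would work with a continuous representation $\rho : W(X) \to \GL_r(E)$ on some finite $E/\Q_\ell$ factoring the given $V$. The central tool is the short exact sequence
\[
0 \to N \to G \to W(\F_q) = \Z \to 0,
\]
with $N := \pi_1(X\otimes_{\F_q}\F)$ and $G := W(X)$, and the geometric Frobenius $F$ a generator of $G/N$. The whole proof is an application of Clifford theory for this normal subgroup with procyclic quotient.

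\textbf{Proof of (i).} Since $G/N$ is abelian, the restriction $V|_N$ is semisimple. Decompose it into distinct isotypic components; conjugation by $F$ permutes these, and irreducibility of $V$ as a $G$-representation forces the permutation to be transitive of some length $m$. Thus the distinct irreducible constituents of $V_\F$ are $\sigma_1,\sigma_1^F,\ldots,\sigma_1^{F^{m-1}}$. Because $H^2(\Z,\Ql^\times)=0$, the Schur obstruction vanishes, so Clifford theory in its sharp form yields multiplicity one: the $\sigma_1$-isotypic piece $V^\flat \subset V|_N$ has dimension $\dim\sigma_1$, is stable under the stabilizer $H\subset G$ of $[\sigma_1]$, and $V\simeq \mathrm{Ind}_H^G V^\flat$. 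Identifying $H$ with the preimage of $m\Z\subset \Z = G/N$, i.e.\ $H = W(X_{\F_{q^m}})$, the induction is exactly $b_{m,*}V^\flat$. Then $V^\flat$ has rank $r/m$ and $V^\flat_\F = \sigma_1$ is irreducible. Uniqueness: any other irreducible lift on $X_{\F_{q^m}}$ whose pushforward is $V$ restricts on $N$ to some $\sigma_1^{F^i}$; the choice of $i$ merely twists $V^\flat$ by $F^i \in G/H$, producing an isomorphic sheaf on $X_{\F_{q^m}}$.

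\textbf{Proof of (ii) and (iii).} For (ii), note that for $x\in |X|$ with preimages $y_1,\ldots,y_s$ in $X_{\F_{q^m}}$, the stalk $(b_{m,*}V^\flat)_{\bar x}$ decomposes as $\bigoplus_i V^\flat_{\bar y_i}$, with $F_x$ acting by permutation of the $y_i$ combined with $F_{y_i}$ on each $V^\flat_{\bar y_i}$; a direct computation of absolute values of eigenvalues (using $[\F_q(y_i):\F_q] = d\cdot m/\gcd(d,m)$ where $d=\deg(x)$) shows that weight-$0$ purity transfers in both directions since $b_m$ is finite \'etale. For (iii), if $V'_\F \cong V_\F$ as $N$-representations, then $V'$ has the same transitive $F$-orbit structure on $N$-isotypic pieces and is therefore still irreducible; by (i), $V' = b_{m,*}(V^\flat)'$ for a unique irreducible $(V^\flat)'$ on $X_{\F_{q^m}}$ with $(V^\flat)'|_N \cong \sigma_1 \cong V^\flat|_N$. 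By Schur's lemma any two irreducible extensions of $\sigma_1$ from $N$ to $H$ (with procyclic quotient $H/N = W(\F_{q^m})$) differ by tensoring with a unique continuous character $W: H/N \to \Ql^\times$, i.e.\ a unique $W\in \dR_1(\F_{q^m})$, giving $V' = b_{m,*}(V^\flat \otimes W)$.

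\textbf{Main obstacle.} The subtle point is to ensure the Clifford multiplicity $e$ equals $1$, so that $V^\flat$ is genuinely geometrically irreducible rather than merely isotypic; equivalently, that the outer action of $H/N$ on $\sigma_1$ lifts to a linear action on the underlying vector space. Both the existence of this lift and the rigidity statement in (iii) reduce to the vanishing $H^2(\Z,\Ql^\times)=0$, which is what makes procyclic quotients of the Weil group so well-behaved; for non-cyclic quotients (as in the analogous statement for higher ramification or positive-dimensional residue extensions) one would only obtain projective representations and a genuine multiplicity $e>1$ could appear.
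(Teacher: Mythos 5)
The paper gives no proof of its own, citing \cite{BBD}, Prop.~5.3.9; your Clifford--Mackey argument, with multiplicity one reduced to the vanishing of $H^2(\Z,\Ql^\times)$, is the natural proof and presumably what BBD's argument amounts to. Parts (ii) and (iii) are handled correctly, including the observation that $V'_\F\cong V_\F$ forces $V'$ irreducible and the Schur-lemma identification of the two lifts of $\sigma_1$ to $H$ up to a character of $H/N$.

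There is, however, a real gap in the uniqueness step of (i). You assert that replacing $V^\flat$ by $V^{\flat F^i}$ ``produc[es] an isomorphic sheaf on $X_{\F_{q^m}}$''. It does not: conjugation by $F$ is an \emph{outer} automorphism of $H=W(X_{\F_{q^m}})$, and restriction to $N=\pi_1(X_\F)$ gives $V^{\flat F^i}\vert_N=\sigma_1^{F^i}$, which by Clifford's theorem is a \emph{different} isomorphism class than $\sigma_1=V^\flat\vert_N$ for $0<i<m$. So $V^\flat$ and $V^{\flat F^i}$ are non-isomorphic lisse Weil sheaves on $X_{\F_{q^m}}$, both geometrically irreducible, both satisfying $b_{m,*}(-)=V$. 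What Mackey's criterion for the normal subgroup $H\triangleleft G$ actually gives is that any such lift is one of $V^\flat, V^{\flat F},\ldots, V^{\flat F^{m-1}}$, i.e.\ $V^\flat$ is unique \emph{up to the action of} $\Gal(\F_{q^m}/\F_q)$ by pullback along the Frobenius automorphism of $X_{\F_{q^m}}$ over $X$. You should either prove that weaker (and correct) uniqueness, or drop the false claim of literal uniqueness; the imprecision does not propagate into the rest of the paper, which only invokes existence, (ii) (which is insensitive to the Frobenius twist), and (iii) (which is relative to a chosen $V^\flat$). A small side remark: semisimplicity of $V\vert_N$ is Clifford's theorem for an arbitrary normal subgroup and has nothing to do with $G/N$ being abelian; the abelianness of $G/N$ is what you actually use later, to make $H$ normal and $H/N\cong\Z$ abelian, so that once the Schur class in $H^2(H/N,\Ql^\times)$ vanishes the remaining factor is a one-dimensional representation of $H/N$ and $e=1$.
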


\medskip

A special case of the Grothendieck trace formula \cite[(1.1.1.3)]{Laumon} says:

\begin{prop} \label{tr}
Let $V$ and $m$ be as in Proposition~\ref{structure}. For $n\ge 1$
and $x\in X(\F_{q^n} )$  
$$t^n_V (x) = \sum_{y\in {X_{\F_{q^m}}(\F_{q^n})} \atop y \mapsto x} 
t^n_{V^\flat}(y).$$
\end{prop}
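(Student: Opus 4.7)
The plan is to deduce Proposition~\ref{tr} from the general trace identity for a finite étale pushforward: for any finite étale morphism $f : Y \to X$ in $\Sm_{\F_q}$ and any lisse $\bar \Q_\ell$-Weil sheaf $W$ on $Y$,
\[
t^n_{f_*W}(x) \;=\; \sum_{\substack{y \in Y(\F_{q^n}) \\ f(y) = x}} t^n_W(y), \qquad x \in X(\F_{q^n}).
\]
Since $b_m : X \otimes_{\F_q} \F_{q^m} \to X$ is finite étale and Proposition~\ref{structure}~(i) gives $V = b_{m,*} V^\flat$, applying this identity with $f = b_m$ and $W = V^\flat$ is exactly the claim.

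To prove the identity, I would fix a geometric point $\bar x \to X$ above $x$. Finite étaleness of $b_m$ gives the stalk decomposition
\[
V_{\bar x} \;=\; (b_{m,*} V^\flat)_{\bar x} \;=\; \bigoplus_{\bar y \in b_m^{-1}(\bar x)} V^\flat_{\bar y},
\]
and the geometric Frobenius $F_x$ acts on this sum as a block permutation: it permutes the summands via its action on the finite set $b_m^{-1}(\bar x)$, carrying $V^\flat_{\bar y}$ isomorphically to $V^\flat_{F_x(\bar y)}$. The trace of such a block-permutation operator picks up only the diagonal blocks, so
\[
\mathrm{tr}(F_x, V_{\bar x}) \;=\; \sum_{\bar y : F_x(\bar y) = \bar y} \mathrm{tr}(F_x \mid V^\flat_{\bar y}).
\]

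For $x \in X(\F_{q^n})$ the operator $F_x$ is $F^n$ with $F$ the $q$-power Frobenius, so the fixed points $\bar y$ of $F_x$ on $b_m^{-1}(\bar x)$ are in bijection with the set $\{\, y \in X_{\F_{q^m}}(\F_{q^n}) : b_m(y) = x\,\}$ (the usual interpretation of $\F_{q^n}$-points as $F^n$-fixed geometric points, using that $b_m$ is étale). For such a fixed $\bar y$, the restriction $F_x \mid V^\flat_{\bar y}$ coincides with the geometric Frobenius $F_y$, so the summand equals $t^n_{V^\flat}(y)$, and the formula follows. The main (and minor) obstacle is purely a bookkeeping one: one must verify that the identification of decomposition groups across the Galois étale cover $b_m$ really does match $F_x \mid V^\flat_{\bar y}$ with $F_y$ at each fixed geometric lift. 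This is standard and follows from the fact that $y$ and $x$ have the same residue field $\F_{q^n}$ together with the étaleness of $b_m$.
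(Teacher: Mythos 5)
Your proof is correct, and it is essentially the argument that stands behind the reference the paper invokes. The paper gives no written proof of this proposition: it is introduced with the remark that it is ``a special case of the Grothendieck trace formula'' and a citation to Laumon (1.1.1.3). Your write-up unpacks what that citation hides in the relevant case: the stalk of the finite \'etale pushforward $b_{m,*}V^\flat$ at $\bar x$ is the direct sum $\bigoplus_{\bar y\in b_m^{-1}(\bar x)} V^\flat_{\bar y}$, the Frobenius $F_x=F^n$ acts as a block permutation, the trace of a block permutation only picks up fixed blocks, and the $F^n$-fixed geometric lifts of $\bar x$ are exactly the $\F_{q^n}$-points $y$ of $X_{\F_{q^m}}$ over $x$, on each of which the restriction of $F_x$ is $F_y$. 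The one bookkeeping point you flag (matching decomposition groups across the cover so that $F_x|_{V^\flat_{\bar y}}=F_y$) is exactly the right thing to check and is indeed standard for a finite \'etale morphism, using that $y$ and $x$ share the residue field $\F_{q^n}$. In short: same mechanism, but you prove the special case directly rather than citing the general formula; this is a harmless and arguably more transparent variant, since the generality of the Grothendieck--Lefschetz trace formula is not needed for a finite \'etale pushforward.
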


Concretely, 
$t^n_V (x) = 0$
if $m$ does not divide $n$.

\section{Frobenius on curves}\label{sec.frob}







\noindent
We now present Deligne's key technical method for proving his finiteness theorems. It
strengthens Proposition~\ref{bas.charpoly} on curves by allowing us to recover an $\ell$-adic sheaf
from an effectively determined finite number of characteristic polynomials of Frobenius.

Our notation is explained in Section~\ref{sec.intro} and Section~\ref{sec.basics}.
Throughout this section  $X$ is a geometrically connected scheme in $\Sm_{\F_q}$ with $\dim(X)=1$.

\begin{thm}[Deligne] \label{frob.thm}
The natural map   
\[
\dR_r(X,D) \xrightarrow{\kappa_N} \sL^{\le N}_r(X)(\bar \Q_\ell)
\]
is injective if 
\begin{equation}
N\ge  4 r^2\lceil \log_{q}(2r^2\sC_{ D} ) \rceil
\end{equation}
\end{thm}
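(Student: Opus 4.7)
The plan is to deduce $V\cong V'$ as semisimple Weil sheaves from the hypothesis that $f_V(x)=f_{V'}(x)$ for every closed $x$ with $\deg(x)\le N$. Writing the hypothesis as $t^n_V(x)=t^n_{V'}(x)$ for every $n\le N$ and every $x\in X(\F_{q^n})$ (since $\deg(x)$ divides $n$ and is therefore $\le N$), I would first reduce to the case where $V$ and $V'$ are both pure of weight $0$: by Corollary~\ref{corwdec} the hypothesis forces matching weight decompositions on both sides, and Corollary~\ref{corodetweight} then provides a common character $\chi\in\dR_1(\F_q)$ whose twist puts both sheaves in weight $0$ without disturbing the hypothesis.

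The key construction is the pair of rank-$r^2$ sheaves $\sG_1=\mathrm{End}(V)=V\otimes V^\vee$ and $\sG_2=V\otimes V'^\vee$: both are pure of weight $0$, both have Swan conductor $\le 2rD$ by~\eqref{Swprop1}--\eqref{Swprop3}, and since the Frobenius eigenvalue multisets on $V_{\bar x}$ and $V'_{\bar x}$ coincide for $\deg(x)\le N$, they carry identical Frobenius traces at every $x\in X(\F_{q^n})$ with $n\le N$. Grothendieck--Lefschetz on the affine connected curve $X\otimes_{\F_q}\F$ (on which $H^0_c$ vanishes) then yields
\[
\trace\bigl(F^n\mid H^2_c(\sG_1)-H^2_c(\sG_2)\bigr)=\trace\bigl(F^n\mid H^1_c(\sG_1)-H^1_c(\sG_2)\bigr)
\]
for every such $n$. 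Setting $\alpha_n=\trace(F^n\mid M)$ with $M=(\sG_1)_{\pi_1(X_{\bar\F})}-(\sG_2)_{\pi_1(X_{\bar\F})}$ the virtual coinvariant module (whose Frobenius eigenvalues are pure of weight $0$, via $H^2_c=(\cdot)_{\pi_1}(-1)$), the left-hand side equals $q^n\alpha_n$. Proposition~\ref{dimestimate} (applied with divisor $2D$) bounds $\dim H^1_c(\sG_j)\le r^2\sC_{2D}$, and Weil II gives weight $\le 1$ on $H^1_c$, so the right-hand side has archimedean absolute value $\le 2r^2\sC_{2D}\cdot q^{n/2}$ under every embedding $\sigma\colon \Ql\hookrightarrow\C$. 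Dividing by $q^n$ yields $|\alpha_n|_\sigma\le 2r^2\sC_{2D}\cdot q^{-n/2}$ for all $1\le n\le N$.

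The final step is to convert this smallness into $\alpha_n=0$ and then into $V\cong V'$. The chosen $N\ge 4r^2\lceil\log_q(2r^2\sC_D)\rceil$ forces the bound $2r^2\sC_{2D}\cdot q^{-n/2}<1$ on a range of more than $2r^2$ consecutive integers $n\le N$; combined with the algebraicity of the weight-$0$ Weil numbers that are the eigenvalues of $F$ on $M$, this will give $\alpha_n=0$ on that range. Since $(\alpha_n)$ satisfies a linear recurrence of order $\le 2r^2$ with nonzero constant term (the eigenvalues on $M$ are units), $2r^2$ consecutive zeros propagate in both directions by the recurrence, yielding $\alpha_n=0$ for all $n\ge 1$. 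Hence $F$ acts with identical eigenvalue multisets on $(\sG_1)_{\pi_1(X_{\bar\F})}$ and $(\sG_2)_{\pi_1(X_{\bar\F})}$; the multiplicity of the eigenvalue $1$ in particular agrees, yielding $\dim\mathrm{End}_{W(X)}(V)=\dim\Hom_{W(X)}(V',V)$. The symmetric argument with $V$ and $V'$ interchanged gives $\dim\mathrm{End}_{W(X)}(V')=\dim\Hom_{W(X)}(V,V')$. Writing $V=\bigoplus V_i^{m_i}$ and $V'=\bigoplus V_i^{m'_i}$ in terms of a common list of irreducible constituents, these equalities become $\sum m_i^2=\sum m_i m'_i=\sum(m'_i)^2$, hence $\sum(m_i-m'_i)^2=0$ and $V\cong V'$.

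The principal obstacle will be the integrality step: uniform archimedean smallness of an algebraic number need not force its vanishing without control at the non-archimedean places. Lafforgue's Theorem~\ref{Lafforgue}, inherited through the weight-$0$ reduction, provides boundedness of all complex conjugates of the eigenvalues, and I would argue through the characteristic polynomial of $F\mid M$ (whose coefficients lie in a number field of bounded degree with all Galois conjugates similarly small) or through a Mahler-measure / Northcott-type finiteness. The factor $4r^2$ in $N$ comes from the virtual dimension $\le 2r^2$ of $M$ entering the linear recurrence, with an additional factor of two providing enough overlap between the range on which archimedean smallness forces vanishing and the window needed to trigger the recurrence.
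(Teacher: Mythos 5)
Your route is genuinely different from the paper's: rather than decomposing $V\oplus V'$ into twist-classes of irreducible constituents $S_i$ and proving $\Ql$-linear independence of the trace functions $t^n_{S_i}$ on $X(\F_{q^n})$ (Lemma~\ref{lemlinind}), you package everything into the single virtual $\Z$-module $M=(\mathrm{End}\,V)_{\pi_1(X_{\F})}-(V\otimes V'^\vee)_{\pi_1(X_{\F})}$ and aim directly at $\alpha_n=\Tr(F^n\mid M)=0$. The reduction to weight~$0$, the Grothendieck--Lefschetz computation, the Weil~II weight bound on $H^1_c$, and your closing Schur-orthogonality count $\sum m_i^2=\sum m_im'_i=\sum(m'_i)^2\Rightarrow m_i=m'_i$ are all sound; the last step is a clean trick the paper does not use.

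The gap is precisely the one you flag, and it is a real one. In the paper the archimedean estimates of Claim~\ref{orth} enter a \emph{contradiction} argument: one assumes a nontrivial relation $\sum\lambda_i t^n_{S_i}=0$, normalizes so that $|\iota\lambda_{i_\circ}|=1$ and $|\iota\lambda_i|\le 1$, pairs against $t^n_{S_{i_\circ}^\vee}$, and the quasi-orthogonality bounds give a strict numerical contradiction. The exact equalities $\Tr(F^n,W_i)=\Tr(F^n,W'_i)$ then drop out as coefficients in an exact linear relation among independent functions, so integrality is never needed. Your reformulation collapses that linear relation into the single scalar $\alpha_n$ before linear independence can be exploited, and uniform archimedean smallness $|\sigma(\alpha_n)|\le 2r^2\sC_{2D}\,q^{-n/2}<1$ then does \emph{not} force $\alpha_n=0$: the eigenvalues $\gamma_j$ of $F$ on $M$ are weight-$0$ Weil numbers in the sense of \cite[Def.~1.2.1]{WeilII}, not algebraic integers or roots of unity, so $\alpha_n=\sum c_j\gamma_j^n$ with $c_j\in\Z$ can be a nonzero algebraic number all of whose conjugates are small (e.g.\ $1-\gamma$ with $\gamma\ne1$ weight-$0$ and all its conjugates near $1$). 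The repairs you sketch do not obviously close this: the $\gamma_j$ are Frobenius values of characters in $\dR_1(\F_q)$ occurring in $(\sG_j)^{\pi_1(X_{\F})}$, whose degree over $\Q$ is not controlled, so Northcott or Mahler-measure finiteness gives no lower bound on the norm of $\alpha_n$. To close the gap you would have to recover the normalization trick --- pick $c_{j_\circ}\ne0$ of maximal archimedean size and pair against $\gamma_{j_\circ}^{-n}$ --- which, carried out at the level of trace functions on $X(\F_{q^n})$ rather than the single number $\alpha_n$, is in effect Lemma~\ref{lemlinind} again.
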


Here for a real number $w$ we let $\lceil w \rceil$ be the smallest integer
larger or equal to $w$.
Theorem~\ref{frob.thm} relies on the Langlands correspondence and weight arguments form
Weil II.  The Langlands correspondence enters via  Corollary~\ref{corwdec}. 

 We
deduce Theorem~\ref{frob.thm} from the following trace version:
\begin{prop} \label{thmtrace}
If $V,V'\in \dR_r(X,D)$  are pure of weight $0$ and
satisfy
$t^n_V  = t^n_{V'}$ for all 
\begin{equation}\label{tracenb}
n \le  4 r^2  \lceil
\log_q (2 r^2\, \sC_{D} ) \rceil,
\end{equation}
then $V=V'$.
\end{prop}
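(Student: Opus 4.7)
The plan is to apply the Grothendieck--Lefschetz trace formula to the four tensor sheaves $A=V\otimes V^\vee$, $A'=V'\otimes V'^\vee$, $B=V\otimes V'^\vee$, $B'=V'\otimes V^\vee$, use the weight arguments of Weil~II to isolate the $H^2_c$-contribution, and then conclude $V\cong V'$ by a Schur-lemma computation of multiplicity vectors.

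Each of $A,A',B,B'$ is semisimple of rank $r^2$, pure of weight $0$, and by the tensor Swan inequality \eqref{Swprop1}--\eqref{Swprop3} has $\Sw\le 2rD$. Since $X$ is affine and the sheaves are lisse, $H^0_c(X\otimes_{\F_q}\F,M)=0$; the top piece $H^2_c(X\otimes_{\F_q}\F,M)=M_{\pi_1(X\otimes_{\F_q}\F)}(-1)$ is pure of Frobenius weight $2$, while by Weil~II together with Theorem~\ref{Lafforgue}, $H^1_c$ has Frobenius weights $\le 1$ and the eigenvalues of $F$ on $M_{\pi_1(X\otimes_{\F_q}\F)}$ are algebraic of archimedean absolute value $1$. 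Combining the Lefschetz trace formula with Proposition~\ref{dimestimate} gives, under any complex embedding of $\Ql$,
\[
\Bigl|\sum_{x\in X(\F_{q^n})}t^n_M(x)-q^n\,\Tr\bigl(F^n,M_{\pi_1(X\otimes_{\F_q}\F)}\bigr)\Bigr|\le r^2\sC_D\cdot q^{n/2}.
\]
The hypothesis $t^n_V=t^n_{V'}$ for $n\le N$ yields pointwise $t^n_B(x)=t^n_V(x)t^n_{V'^\vee}(x)=t^n_{V'}(x)t^n_{V'^\vee}(x)=t^n_{A'}(x)$ on $X(\F_{q^n})$, and symmetrically $t^n_{B'}=t^n_A$; substituting into the trace formulas and subtracting gives
\[
\bigl|\Tr\bigl(F^n,B_{\pi_1(X\otimes_{\F_q}\F)}\bigr)-\Tr\bigl(F^n,A'_{\pi_1(X\otimes_{\F_q}\F)}\bigr)\bigr|\le 2r^2\sC_D\,q^{-n/2}\qquad(1\le n\le N),
\]
and analogously with $A,B'$.

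The left-hand side is a difference $\sum\alpha_i^n-\sum\beta_j^n$ of sums of algebraic numbers of weight $0$ lying in a number field whose degree and denominators are controlled in terms of $r$ and $\sC_D$. The choice $N=4r^2\lceil\log_q(2r^2\sC_D)\rceil$ suffices, via a Kronecker-type inequality on small algebraic numbers, to upgrade the smallness over this range of $n$ to the identity $\sum\alpha_i^n=\sum\beta_j^n$ for every $n\ge 1$, so that $B_{\pi_1(X\otimes_{\F_q}\F)}\cong A'_{\pi_1(X\otimes_{\F_q}\F)}$ and $A_{\pi_1(X\otimes_{\F_q}\F)}\cong B'_{\pi_1(X\otimes_{\F_q}\F)}$ as $F$-modules. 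Taking $F$-invariants and using semisimplicity of the $\pi_1(X\otimes_{\F_q}\F)$-action, one gets
\[
\dim\mathrm{End}_{W(X)}(V)=\dim\mathrm{Hom}_{W(X)}(V',V)=\dim\mathrm{End}_{W(X)}(V').
\]
Writing $V\cong\bigoplus V_i^{n_i}$ and $V'\cong\bigoplus V_i^{m_i}$ in a common list of isomorphism classes of irreducible $W(X)$-modules, Schur's lemma reads these three dimensions as $\sum n_i^2$, $\sum n_im_i$, $\sum m_i^2$ respectively, and the chain of equalities yields $\sum(n_i-m_i)^2=0$, i.e.\ $V\cong V'$.

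The hard part will be the small-algebraic-number step: converting the exponentially small bound $2r^2\sC_Dq^{-n/2}$ for $n\le N$ into exact vanishing of the algebraic difference $\sum\alpha_i^n-\sum\beta_j^n$. This relies on a quantitative Kronecker bound on algebraic numbers of bounded degree, denominator, and archimedean absolute value $1$, which is itself a consequence of Lafforgue's theorem applied to the various lisse constituents. The logarithmic shape of $N$ and its prefactor $4r^2$ are tuned precisely so that $q^{-N/2}$ beats the combinatorial constants coming from cohomology spaces of dimension $\le r^2\sC_D$, while leaving enough values of $n$ to separate distinct Frobenius eigenvalues on the coinvariant spaces.
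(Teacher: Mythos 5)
Your strategy is genuinely different from the paper's, and it runs into a real gap at exactly the step you flag as ``the hard part.'' The problem is that your claimed ``quantitative Kronecker bound'' does not exist in the form you need it. The eigenvalues of Frobenius on the coinvariant spaces $B_{\pi_1}$, $A'_{\pi_1}$, etc.\ are indeed algebraic of weight $0$ (by Lafforgue/Weil~II), and their \emph{number} is bounded by $r^2$, but their \emph{degree over $\Q$ and their height} are not controlled by $r$ and $\sC_D$ at all. Concretely, the eigenvalues of $F$ on $\mathrm{Hom}_{\pi_1(X\otimes\F)}(V',V)$ are ratios of eigenvalues of the one-dimensional coefficient sheaves $W_i$, $W'_i\in\dR_1(\F_{q^{m_i}})$, and such one-dimensional characters may take arbitrary weight-$0$ algebraic values. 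Therefore the assertion that the eigenvalues lie ``in a number field whose degree and denominators are controlled in terms of $r$ and $\sC_D$'' is simply false, and there is no diophantine lower bound that forces the exponentially small quantity $\bigl|\sum_i\alpha_i^n-\sum_j\beta_j^n\bigr|\le 2r^2\sC_D\,q^{-n/2}$ to vanish exactly for $n$ in a finite window. This is not a gap that can be patched by tuning the constant $4r^2$; the entire Kronecker step is unavailable.

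The paper's proof sidesteps precisely this difficulty, and it is instructive to see how. Instead of working with the $r^2$-dimensional tensor sheaves $A,A',B,B'$ and the trace formula globally, Deligne first decomposes $V$ and $V'$ via Proposition~\ref{structure} as $V=\bigoplus_i b_{m_i,*}(S_i^\flat\otimes W_i)$ and $V'=\bigoplus_i b_{m_i,*}(S_i^\flat\otimes W'_i)$, where the $S_i^\flat$ are geometrically irreducible and pure of weight $0$ and the $W_i, W'_i$ are one-dimensional coefficient characters over $\F_{q^{m_i}}$. He then proves (Lemma~\ref{lemlinind}) that for each \emph{single} $n\ge 2\log_q(2r^2\sC_D)$, the trace functions $t^n_{S_i}$ on $X(\F_{q^n})$ are $\Ql$-linearly independent. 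The mechanism there is the quasi-orthogonality relations (Claim~\ref{orth}): the Gram matrix $\bigl(\langle S_{i_1},S_{i_2}\rangle_n\bigr)$ is diagonally dominant for such $n$, by the Weil~II bound on $H^0_c\oplus H^1_c$ together with the $H^2_c$-vanishing/computation. This is a finite linear-algebra statement at each fixed $n$ and requires no arithmetic control on eigenvalues. Feeding the hypothesis $t^n_V=t^n_{V'}$ into this basis yields $\Tr(F^n,W_i)=\Tr(F^n,W'_i)$ for all $i\in I_n$, and the window $n\le 4r^2\lceil\log_q(2r^2\sC_D)\rceil$ is chosen exactly so that for each $i$ it contains $\ge 2r$ consecutive multiples of $m_i$. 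Since $W_i\oplus W'_i$ has at most $2r$ Frobenius eigenvalues, the Vandermonde argument of Lemma~\ref{lemlinalg} then upgrades these finitely many trace equalities to $W_i=W'_i$ outright. In short: where you need a diophantine argument that does not exist, the paper reduces to a Vandermonde determinant by first isolating the one-dimensional coefficient sheaves. If you want to salvage your quasi-orthogonality/Schur-multiplicity idea, you would essentially have to insert the same decomposition step, at which point you would be redoing the paper's proof.
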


\begin{proof}[Prop.\ \ref{thmtrace} $\Rightarrow$ Thm.\ \ref{frob.thm}]
Let $V, V' \in \dR_r(X,D)$. 
We write
\[
V=\bigoplus_{w\in \mathcal W} V_w \quad\text{ and }\quad  V'=\bigoplus_{w\in \mathcal W} V'_w
\]
as in Corollary~\ref{corwdec}. 
The condition  $\alpha_N(V)=\alpha_N(V')$  implies $\alpha_N(V_w)=\alpha_N(V'_w)$, thus
 $t^n_{V_w}=t^n_{V'_w}$ for all $w\in \sW $ and all
$n$ as in \eqref{tracenb}. By Proposition~\ref{thmtrace}, applied to some twist of weight $0$ of $V_w$ and $V'_w$ by the same $\chi$, this implies $V_w= V'_w$ for all
$w\in \mathcal W$. 
\end{proof}

\subsection{Proof of Proposition \ref{thmtrace}}

Let $J$ be the set of irreducible $W\in \dR_s(X)$, $1\le s\le r$, which are twists of direct summands of
$V\oplus V'$. Set $I=J/\text{twist}$.
Choose representative sheaves $S_i\in \dR(X)$ which are pure of weight $0$  ($i\in I$).
In particular this implies that $\Hom_{X\otimes_{\F_q}\F}(S_{i_1}, S_{i_2} )=0$ for $i_1\ne i_2\in I$
by Proposition~\ref{structure}. Also for each $i\in I$ we have
\[
S_i = b_{m_i, *} S^\flat_i 
\]
for positive integers $m_i$ and geometrically irreducible $S^\flat_i\in \dR(X_{\F_{q^{m_i}}} )$ with the
notation of Proposition~\ref{structure}.

It follows from Proposition~\ref{structure} that there are 
$W_i,W'_i\in \dR(\F_{q^{m_i}})$ pure of weight $0$
 such that
\[
V =  \bigoplus_{i\in I} b_{m_i, * } (S^\flat_i \otimes_{\bar{\Q}_\ell } W_i)
\]
and
\[
V' =  \bigoplus_{i\in I} b_{m_i, * } (S^\flat_i \otimes_{\bar{\Q}_\ell } W'_i).
\]

For $n>0$ set \[
I_n = \{ i\in I, \: m_i | n  \} .
\]

\begin{lem}\label{lemlinind} Let $S_i$ be in $\sR(X)$  pairwise distinct, geometrically irreducible, pure of weight $0$.  
Then the  functions $$t^n_{S_i}: X(\F_{q^n}) \to \bar{\Q}_\ell \;\;\;\;\;\;\;\;\;\;
(i\in
I_n) $$  are linearly independent over $\Ql$ for $n\ge 2\, \log_q(2 r^2 \sC_{D}) $.
\end{lem}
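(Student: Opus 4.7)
The plan is to establish a quasi-orthogonality relation
\[
\sum_{x \in X(\F_{q^n})} t^n_{S_i}(x) \cdot t^n_{S_j^\vee}(x) \ = \ \delta_{ij}\, q^n \ + \ O(q^{n/2}),
\]
with an effective implicit constant of size $\sim r^2\, \sC_D$, and then deduce linear independence by comparing the main term with the error when testing a hypothetical relation $\sum_i c_i\, t^n_{S_i} \equiv 0$ against $t^n_{S_{j_0}^\vee}$. The key tools are the Grothendieck--Lefschetz trace formula applied to $S_i \otimes S_j^\vee$, Deligne's Weil~II purity, the tensor-product estimate \eqref{Swprop1} for Swan conductors, and the dimension bound of Proposition~\ref{dimestimate}.

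For the leading term, I would use that the $S_i$ are geometrically irreducible and pairwise distinct---understood, in the natural reading, modulo twist by $W(\F_q)$-characters, since twist-equivalent $S_i, S_j$ yield trace functions on $X(\F_{q^n})$ differing by the global scalar $\chi(F)^n$ and could never be linearly independent. Thus the $S_i|_{X_\F}$ are absolutely irreducible and pairwise non-isomorphic $\pi_1(X_\F)$-modules, and Schur's lemma gives
\[
(S_i \otimes S_j^\vee)^{\pi_1(X_\F)} \ = \ \Hom_{\pi_1(X_\F)}(S_j, S_i) \ = \ \delta_{ij}\, \Ql,
\]
with trivial Frobenius on the identity endomorphism; the analogous statement for coinvariants, combined with Poincar\'e duality, shows that $H^2_c(X_\F, S_i \otimes S_i^\vee) = \Ql(-1)$ contributes the main term $q^n$ to the Lefschetz sum, while $H^0_c$ and $H^2_c$ vanish off the diagonal.

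For the error term, the trace formula yields
\[
\sum_{x \in X(\F_{q^n})} t^n_{S_i \otimes S_j^\vee}(x) \ = \ \delta_{ij}\, q^n \ - \ \Tr\!\bigl(F^n \mid H^1_c(X_\F,\, S_i \otimes S_j^\vee)\bigr) \ + \ (\text{small } H^0_c\text{-term}),
\]
and purity of $S_i, S_j$ combined with Weil~II gives weight $\le 1$ on $H^1_c$, hence $|\Tr(F^n \mid H^1_c)| \le \dim H^1_c \cdot q^{n/2}$ under any embedding $\Ql \hookrightarrow \C$. Using $\Sw(S_i \otimes S_j^\vee) \le (r_i + r_j)D \le 2 r_i r_j D$ from \eqref{Swprop1} and applying Proposition~\ref{dimestimate} with the enlarged divisor $2D$ (noting the elementary inequality $\sC_{2D} \le 2\sC_D$), the dimension is bounded by $2 r^2\, \sC_D$, so each error entry is at most $2 r^2\, \sC_D \cdot q^{n/2}$.

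Pointwise multiplying a putative relation $\sum_i c_i t^n_{S_i}(x) = 0$ by $t^n_{S_{j_0}^\vee}(x)$ (using the multiplicative identity $t^n_{S_i} \cdot t^n_{S_{j_0}^\vee} = t^n_{S_i \otimes S_{j_0}^\vee}$), summing over $X(\F_{q^n})$, and selecting $j_0$ with $|c_{j_0}|$ maximal under the fixed embedding gives $q^{n/2} \lesssim r^2\, \sC_D$, contradicting $n \ge 2 \log_q(2 r^2\, \sC_D)$ unless $c_{j_0} = 0$, hence all $c_i = 0$. The main obstacle is tracking the constant: to absorb the naive dependence on the number of indices $|I_n|$, one applies Proposition~\ref{dimestimate} to the single sheaf $\mathrm{End}\bigl(\bigoplus_i S_i\bigr)$ rather than to each $S_i \otimes S_j^\vee$ individually, obtaining the aggregate bound $\sum_{i,j} \dim H^1_c(X_\F, S_i \otimes S_j^\vee) \le (\sum_i r_i)^2\, \sC_D$, which controls the total error mass uniformly and lands the exponent on the stated threshold.
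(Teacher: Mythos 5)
Your proposal is essentially the paper's proof: normalize a putative relation $\sum_i \lambda_i t^n_{S_i}=0$ so the coefficient of largest $\iota$-absolute value equals $1$, multiply pointwise by $t^n_{S_{j_0}^\vee}$, sum over $X(\F_{q^n})$, and bound the resulting pairings $\langle S_{j_0}, S_i \rangle_n$ by combining Schur's lemma and Poincar\'e duality for the main term with Weil~II purity on $H^1_c$ and the Euler-characteristic estimate of Proposition~\ref{dimestimate} for the error, which is precisely the content of Claim~\ref{orth}. The only cosmetic difference is the bookkeeping of the total error: the paper sums the individual bounds $\rank(S_{j_0})\rank(S_i)\,\sC_D\, q^{n/2}$ over $i$ and uses $\sum_i\rank(S_i)\le 2r$ to land on $2r^2\sC_D\,q^{n/2}$, whereas your aggregation via $\mathrm{End}\bigl(\bigoplus_i S_i\bigr)$ (rank $\le 4r^2$) overshoots by a factor of $2$; applying Proposition~\ref{dimestimate} to $\Hom\bigl(S_{j_0},\bigoplus_i S_i\bigr)$ instead recovers the stated constant.
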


\begin{proof}
Fix an isomorphism $\iota:\bar{\Q}_\ell \stackrel{\sim}{\to} \C$. Assume we have
a
linear relation 
\begin{equation}\label{linreltrace}
\sum_{i\in I_n} \lambda_i \, t^n_{S_i} = 0, \ \lambda_i\in \bar \Q_\ell,
\end{equation}
 such that not all $\lambda_i$ are $0$. Multiplying by a constant in $\bar
\Q_\ell^\times$, we may assume that 
 $|\iota(\lambda_{i_\circ})| = 1$ for one $i_\circ\in I_n$ and
$|\iota(\lambda_{i})|\le 1$ for all $i\in I_n$. 
Set
\[
\langle S_{i_1}, S_{i_2} \rangle_n =  \sum_{x\in X(\F_{q^n})}
t^n_{\mathit{Hom}(S_{i_1}, S_{i_2})}(x) \]
for $i_1, i_2\in I_n$.
Observe that
\[
t^n_{\mathit{Hom}(S_{i_1}, S_{i_2})}= t^n_{S_{i_1}^\vee}\cdot t^n_{S_{i_2}}.
\]
Multiplying \eqref{linreltrace} by $t^n_{S_{i_\circ}^\vee}$ and summing over all
$x\in X(\F_{q^n})$  one obtains
\begin{equation}
\sum_{i\in I_n} \lambda_i\, \langle S_{i_\circ} , S_i \rangle_n = 0 .
\end{equation}

\begin{claim} \label{orth}
One has
\begin{itemize}
\item [(i)]
\[
| \iota  \langle S_{i_\circ} , S_i \rangle_n  | \le {\rm rank} (S_{i_\circ} )
{\rm rank} (
S_i)\, \sC_{D}\, q^{n/2}
\]
for $i\ne i_\circ$,
\item[(ii)] 
\[
| m_{i_\circ}\, q^n  - \iota    \langle S_{i_\circ} , S_{i_\circ} \rangle_n   
|\le   {\rm rank} (S_{i_\circ} )^2 \, \sC_{D}\, q^{n/2} .
\]
\end{itemize}
\end{claim}

\medskip

\noindent {\em Proof of} (i):\\
By \cite[Th\'eor\`eme 3.3.1]{WeilII}  the eigenvalues $\alpha$ of
$F^n$ on $H^k_c(X\otimes_{\F_q} \F, \mathit{Hom}(S_{i_\circ}, S_{i_\circ}))$ for $k\le 1$ fulfill
$$
|\iota \alpha | \le q^{n/2} .$$
On the other hand 
\begin{eqnarray*}
\dim_{\bar{\Q}_\ell} (H^0_c(X\otimes_{\F_q} \F, \mathit{Hom}(S_{i_\circ}, S_{i}))  ) +
\dim_{\bar{\Q}_\ell} (H^1_c(X\otimes_{\F_q} \F, \mathit{Hom}(S_{i_\circ}, S_{i}))  ) & \le &
\\
 {\rm rank} (S_{i_\circ}) {\rm rank} ( S_{i})\, \sC_{D}   & &
\end{eqnarray*}
by Proposition~\ref{dimestimate}. In fact the we have 
$$\Sw( \mathit{Hom}(S_{i_\circ},
S_{i})) \le   {\rm rank} (S_{i_\circ}) {\rm rank} ( S_{i}) D  $$
by \eqref{Swprop1} - \eqref{Swprop3}.
Under the assumption $i\ne i_\circ$ one has
\[
H^2_c(X\otimes_{\F_q} \F, \mathit{Hom}(S_{i_\circ}, S_{i})) =  {\rm Hom}_{X\otimes_{\F_q} \F} (  S_i ,
S_{i_\circ}) \otimes \bar \Q_\ell (-1) =0
\]
by Poincar\'e duality. Putting this together and using Grothendieck's
trace formula \cite[1.1.1.3]{Laumon}  one obtains (i).

\medskip

\noindent{\em Proof of} (ii):\\
It is similar to (i) but this
time we have
\[
\dim_{\bar \Q_\ell} H^2_c(X\otimes_{\F_q} 
F, \mathit{Hom}(S_{i_\circ}, S_{i})) = m_{i_\circ}
\]
and for an eigenvalue $\alpha$ of $F^n$ on $$H^2_c(X \otimes_{\F_q} \F,
\mathit{Hom}(S_{i_\circ}, S_{i}))=  {\rm Hom}_{X\otimes_{\F_q} 
\F} (  S_i , S_{i_\circ})
\otimes \bar \Q_\ell (-1)$$
we have $\alpha= q^n$. This finishes the proof of the claim.

\bigskip

Since under the assumption on $n$ from Lemma~\ref{lemlinind}
\[
\sC_D  \, {\rm rank} (S_{i_\circ} )\sum_{i\in I_n} {\rm rank}( S_i)
< q^{n/2},
\]
we get a contradiction to the linear dependence \eqref{linreltrace}.

\end{proof}


By Proposition~\ref{tr} for any $n\ge 0$ we have 
\[
t^n_V = \sum_{i\in I_n }t^n_{W_i}\, t^n_{  S_i  }
\]
and 
\[
t^n_{V'} = \sum_{i\in I_n } t^n_{W'_i}\, t^n_{  S_i  }.
\]
Under the assumption of equality of traces from Theorem~\ref{thmtrace} and using
Lemma~\ref{lemlinind} we get 
\begin{equation}\label{eqtracef}
{\rm Tr}(F^n, W_i) = {\rm Tr}(F^n, W'_i)    \;\;\;\;\;\;\;\;\;\; i\in I_n
\end{equation}
for $$  2\, \log_q(2r^2 \sC_D) \le n \le 4 r^2  \lceil
\log_q (2 r^2\, \sC_{D} ) \rceil .$$
In particular this means that equality \eqref{eqtracef} holds for $$n\in \{ m_i
\, A, m_i\,(A+1),\ldots , m_i\, (A+2r-1) \}, $$
where $A= \lceil 2\, \log_q(2 r^2 \sC_D) \rceil $. So Lemma~\ref{lemlinalg}
 applied to the set $\{b_1,\ldots, b_w\}$ of eigenvalues 
of $F^{m_i}$ of  $W_i$ and $ W'_i$ (so $w\le 2r$)
shows that $ W_i = W'_i $ for all $i\in I$.

\begin{lem}\label{lemlinalg}
Let $k$ be a field and consider elements $a_1,\ldots , a_w \in k, \ b_1, \cdots
b_w\in k^\times$ such that
\[
F(n) := \sum_{1\le j\le w} a_j \, b_j^{n} = 0
\]
for $1\le n \le w$. Then $F(n)=0$
for all $n \in \Z$.
\end{lem}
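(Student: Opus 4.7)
The plan is to reduce the problem to a non-vanishing Vandermonde determinant. The key point is that if the values $b_j$ are \emph{distinct}, then the matrix $(b_j^n)_{1 \le n,j \le w}$ is invertible, forcing every $a_j$ to be zero; and the case of coinciding $b_j$ is handled by an elementary collapsing step.

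First I would group together indices $j$ whose $b_j$ coincide: replacing each block of equal $b$-values by a single summand with coefficient equal to the sum of the corresponding $a_j$'s, one obtains a shorter expression
\[
F(n) = \sum_{j=1}^{w'} a'_j c_j^{n}, \qquad w' \le w,
\]
where now the $c_j \in k^\times$ are pairwise distinct. The hypothesis $F(n)=0$ for $1 \le n \le w$ still gives, in particular, $F(n)=0$ for $1 \le n \le w'$.

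Next I would interpret these $w'$ equations as a linear system $M\vec a' = 0$ with $M_{nj} = c_j^{n}$, and factor $M = V \cdot D$ where $D = \mathrm{diag}(c_1,\dots,c_{w'})$ and $V$ is the Vandermonde matrix $V_{nj} = c_j^{n-1}$. Since $c_j \in k^\times$, $D$ is invertible; since the $c_j$ are pairwise distinct, $\det V = \prod_{i<j}(c_j - c_i) \ne 0$. Hence $M$ is invertible over $k$ and all $a'_j$ vanish, so $F$ is identically the zero function. Because $c_j \in k^\times$, the expression $c_j^n$ (and hence $F(n)$) is defined for every $n \in \mathbb Z$, giving the conclusion.

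There is no real obstacle in this argument; the only subtlety is that one must first collapse repeated $b_j$'s before invoking Vandermonde, since the Vandermonde determinant would otherwise vanish spuriously. Equivalently, one can phrase the argument by noting that $F(n)$ satisfies the linear recurrence whose characteristic polynomial is $\prod_{j=1}^{w'}(X - c_j)$, and a recurrence of order $w'$ that vanishes on $w'$ consecutive integers vanishes identically on $\mathbb Z$ (using $c_j \in k^\times$ to propagate in both directions).
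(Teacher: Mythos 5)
Your proof is correct and is essentially the paper's proof: the paper also reduces "without loss of generality" to the case of pairwise distinct $b_j$ and then invokes the non-vanishing of the Vandermonde determinant. You simply make the collapsing step and the factorization $M = V\cdot D$ explicit.
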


\begin{proof}
Without loss of generality we can assume that the $b_j$ are pairwise different
for $1\le j\le w$. Then the
Vandermonde matrix
\[
(b_j^n)_{1\le j,n\le w} 
\]
has non-vanishing determinant, which implies that $a_j=0$ for all $ j$.
\end{proof}

\section{Moduli space of $\ell$-adic sheaves}\label{sec.moduli}

\noindent
In Section~\ref{sec.basics} we introduced an injective map 
\[
\kappa: \dV_r(X) \to \sL_r(X)(\Ql)
\] 
from the set of 2-skeleton $\ell$-adic sheaves to the $\Ql$-points of an affine scheme
$\sL_r(X)$ defined over $\Q$, which is not of finite type over $\Q$ if ${\rm dim}(X)\ge 1$. 
Assume that there is a connected normal projective compactification $X \subset \bar X$ such that $\bar X
\setminus X$ is the support of an effective Cartier divisor on $\bar X$. We use the notation
of Section~\ref{sec.basics}.

The existence of the moduli space of $\ell$-adic sheaves on $X$ is shown in the following theorem
of Deligne.
\begin{thm}\label{thm.moduli}
For any effective Cartier divisor $D\in \Div^+(\bar X)$ with support in $\bar X \setminus
X$ there is a unique reduced closed subscheme $L_r(X,D)$ of $\sL_r(X)$ which is of finite type
over $\Q$ and such that $$L_r(X,D)(\Ql) = \kappa (\dV_r(X,D)).$$
\end{thm}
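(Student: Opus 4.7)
The plan is to treat $\dim X = 1$ first, using Deligne's Frobenius reconstruction (Theorem~\ref{frob.thm}) to cut out the candidate finite-type subscheme, and then to reduce the higher-dimensional case to curves via the 2-skeleton compatibility. Uniqueness is automatic: two reduced closed finite-type $\Q$-subschemes of an affine $\Q$-scheme are determined by their $\bar \Q$-points, which are Zariski dense among their $\Ql$-points, so equality of $\Ql$-points forces equality of the subschemes.

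For $\dim X = 1$ one has $\dV_r(X,D) = \dR_r(X,D)$. Fix $N = N(r,D)$ as in Theorem~\ref{frob.thm} so that $\kappa_N : \dR_r(X,D) \to \sL_r^{\le N}(X)(\Ql)$ is injective, and let $Z$ be the reduced Zariski closure of its image inside the finite-type $\Q$-scheme $\sL_r^{\le N}(X)$. The core step is to promote the set-theoretic rule ``$\kappa_N(V) \mapsto f_V(x)$'', valid for each closed point $x$ of $X$ with $\deg(x) > N$, into an honest $\Q$-morphism $g_x : Z \to \sP_r$. Once the $g_x$ are constructed, the product map
\[
(\pi_N,\,(g_x)_x) : Z \to \sL_r(X) = \sL_r^{\le N}(X) \times_{\Q} \prod_{\deg(x) > N} \sP_r
\]
is a closed immersion, and its image is the desired $L_r(X,D)$. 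I would build $g_x$ from ${\rm Aut}(\Ql/\Q)$-stability of $\kappa(\dR_r(X,D))$ (Corollary~\ref{vir.compan} together with Proposition~\ref{ramcompatible}), combined with the explicit linear-algebra interpolation underlying the proof of Proposition~\ref{thmtrace}: Galois equivariance forces the graph of $V \mapsto f_V(x)$ to be defined over $\Q$, while the Vandermonde argument exhibits explicit polynomial relations expressing $f_V(x)$ in terms of the $f_V(y)$ with $\deg(y) \le N$.

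For $\dim X \ge 2$ the restriction maps along curves give an embedding $\sL_r(X) \hookrightarrow \prod_{C \in \Cu(X)} \sL_r(C)$, since every closed point of $X$ lies on some $C \in \Cu(X)$, and $\kappa(\dV_r(X,D))$ lies inside $\prod_C L_r(C, \bar\phi^* D)$, each factor of finite type by the curve case. The 2-skeleton compatibility on pairs $(C,C')$ cuts out a further closed subscheme. Using the complexity estimate of Theorem~\ref{frob.thm}, which bounds the auxiliary $N$ uniformly in terms of $\sC_{\bar\phi^* D}$, I would select a finite collection $C_1, \ldots, C_s$ covering a dense open of $X$, together with finitely many intersection curves, whose compatibility conditions already enforce all others. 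Projecting back to $\sL_r(X)$ then yields the required finite-type reduced closed subscheme $L_r(X,D)$.

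The hard part is the curve-case construction of the morphisms $g_x$: Theorem~\ref{frob.thm} supplies only a set-theoretic reconstruction, and upgrading it to a morphism of $\Q$-schemes requires simultaneously exploiting Galois equivariance of $\sigma$-companions and the explicit polynomial nature of the interpolation in Section~\ref{sec.frob}. A related, more combinatorial, obstacle in the higher-dimensional reduction is choosing a finite collection of curves of uniformly bounded complexity whose pairwise intersections already detect 2-skeleton compatibility.
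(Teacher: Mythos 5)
Your proposal is incomplete and diverges from the paper in the crucial step, namely how the finite-type structure is produced in dimension one.

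Your plan for $\dim X=1$ is to use Theorem~\ref{frob.thm} to embed $\dR_r(X,D)$ into the finite-type scheme $\sL_r^{\le N}(X)$, take the Zariski closure $Z$, and then, for $\deg(x)>N$, upgrade the set-theoretic rule $\kappa_N(V)\mapsto f_V(x)$ to regular maps $g_x:Z\to \sP_r$. You already flag that this last step is "the hard part," and this is where the proposal genuinely fails: injectivity of $\kappa_N$ gives no inverse morphism, Galois equivariance only constrains the $\Q$-structure and does not produce regularity, and the Vandermonde interpolation of Lemma~\ref{lemlinalg} determines the higher traces uniquely but does not express them as regular functions of the low-degree data on the whole of $Z$ (the linear recursion satisfied by the $t^n_V$ has coefficients depending algebraically, not polynomially, on the low-degree traces, and the closure $Z$ may contain points that are not of the form $\kappa_N(V)$). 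In fact the paper's Theorem~\ref{frob.thm} is never invoked in the curve case. Instead the paper uses the finiteness theorem itself (Lafforgue, via Lemma~\ref{lem.finitedec}): $\dR_r(X,D)$ is a \emph{finite} union of twist orbits $\{\chi_1\cdot V_1\oplus\cdots\oplus\chi_n\cdot V_n\}$, each such orbit is the image of a finite morphism $\G_m^n\to\sL_r(X)$ (Lemma~\ref{lem.finitemor}), and Proposition~\ref{points} then yields a reduced closed finite-type subscheme over $\Ql$ with the prescribed $\Ql$-points. Descent to $\Q$ follows from Galois stability via Corollary~\ref{vir.compan} and Proposition~\ref{descent}. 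This sidesteps the interpolation problem entirely.

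In the higher-dimensional reduction your sketch is closer in spirit, but the step where you "select a finite collection $C_1,\dots,C_s$ … whose compatibility conditions already enforce all others" is unjustified and does not appear in the paper. The paper does not claim that a single finite batch of curves captures all compatibilities; rather, it takes an exhaustive system of curves $(C_n)$ with controlled complexity, forms the tower $\cdots\to L_r(C_{n+1},D_{n+1})\to L_r(C_n,D_n)\to\cdots$ with finite transition morphisms (Lemma~\ref{lem.finmorpro}), uses Theorem~\ref{frob.thm} to show the transition maps are eventually injective on $\Ql$-points, and invokes the pro-scheme stabilization result Proposition~\ref{proinj} to conclude finite type. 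Theorem~\ref{frob.thm} is therefore the key input in the higher-dimensional case, not the curve case. You will need to replace your finite-batch heuristic with this limiting/stabilization argument (or a substitute of comparable strength) for the proof to go through.
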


Uniqueness is immediate from Proposition~\ref{HNS}.
In Section~\ref{sec.modcu} we construct $L_r(X,D)$ for $\dim(X)=1$. In
Section~\ref{sec.modhigh} we construct $L_r(X,D)$ for general $X$. Before we begin the
proof we introduce some elementary constructions on $\sL_r(X)$.

\subsection{Direct sum and twist as scheme morphisms}


For $r=r_1+r_2$ the isomorphism
\[
\G_m^{r_1} \times_\Q \G_m^{r_2}  \xrightarrow{\simeq} \G_m^r
\]
together with the  embedding of groups $S_{r_1}\times S_{r_2}\subset S_{r_1+r_2}$ induces a finite surjective map
\ga{+}{  - \oplus - :   \sP_{r_1}\times_{\Q} \sP_{r_2}\to \sP_{r}, \quad  (P,Q)\mapsto P Q}
via the isomorphism \eqref{eq.Gmiso}. We call it the direct sum.

\medskip
There is a twisting action by $\G_m$
\begin{equation*}
\G_m \times_\Q \sP_r  \to \sP_r  , \quad (\alpha , P) \mapsto \alpha \cdot P
\end{equation*}
  defined by the diagonal action of $\G_m$
on $\G_m^r$ 
\[
(\alpha , (\alpha_1 ,\ldots , \alpha_r)) \mapsto (\alpha \cdot \alpha_1 ,\ldots , \alpha
\cdot \alpha_r) 
\]
and the isomorphism  \eqref{eq.Gmiso}.

\medskip

We now extend the direct sum and twist morphisms to $\sL(X)$.

By taking direct sum on any factor of $\sL(X)$ we get for $r_1+r_2=r$ a morphism of
schemes over $\Q$
\begin{equation}\label{acsum}
- \oplus - : \sL_{r_1}(X) \times \sL_{r_2}(X) \to \sL_r(X)
\end{equation}
Note that the direct sum is not a finite morphism in general, since we have an
infinite product over closed points of $X$.

\medskip

The twist is an action of $\G_m$
\begin{equation}\label{actwist}
\G_m \times_\Q \sL_r(X) \to \sL_r(X)
\end{equation}
given by
\[
(\alpha, (P_x)_{x\in |X|} ) \mapsto \alpha \cdot (P_x)_{x\in |X|} = (\alpha^{\deg(x)} \cdot P_x
)_{x\in |X|} 
\]
where we take the degree of a point $x$  over $\F_q$.

\medskip

Let $k$ be a field containing $\Q$ and $P_i\in \sL_{r_i}(k) $, $i=1, \ldots , n$.
Assume $r_i>0$ for all $i$ and set
$r=r_1 + \cdots + r_n$.

\begin{lem}\label{lem.finitemor} 
The morphism of schemes over the field $k$
\[
\rho: \G_m^{n}  \to \sL_r(X) ,\quad (\alpha_i)_{i=1,\ldots ,n} \mapsto \alpha_1 \cdot P_1
\oplus \cdots \oplus  \alpha_n \cdot P_n
\]
is finite.
\end{lem}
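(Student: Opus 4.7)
The plan is to reduce the finiteness of $\rho$ to the finiteness of its composition with a single factor projection.

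First, I would pick a closed point $x_0 \in |X|$ with $d_0 = \deg(x_0)$ and let $\pi_{x_0}: \sL_r(X) \to \sP_r$ be the projection onto the $x_0$-component. Setting $\rho_0 = \pi_{x_0} \circ \rho$, it suffices to show that $\rho_0: \G_m^n \to \sP_r$ is finite. Indeed, $\rho_0^* = \rho^* \circ \pi_{x_0}^*$, so the image of $\rho_0^*$ in $\sO(\G_m^n)$ is contained in the image of $\rho^*$; finiteness of $\sO(\G_m^n)$ as a module over the former then forces finiteness over the latter, which is the definition of $\rho$ being finite.

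Next, I would factor $\rho_0$ as
$$\G_m^n \xrightarrow{\phi_1 \times \cdots \times \phi_n} \sP_{r_1} \times_\Q \cdots \times_\Q \sP_{r_n} \xrightarrow{\oplus} \sP_r,$$
where $\phi_i: \G_m \to \sP_{r_i}$ sends $\alpha_i \mapsto \alpha_i^{d_0} \cdot P_{i, x_0}$ and $\oplus$ is the iterated direct sum. By iteration of \eqref{+}, the morphism $\oplus$ is finite, and products of finite morphisms are finite, so it remains to show that each $\phi_i$ is finite.

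For this, I would compute the pullback $\phi_i^*$ in coordinates. Writing $P_{i, x_0} = 1 + c_{i, 1} t + \cdots + c_{i, r_i} t^{r_i}$ with $c_{i, r_i} \in k^\times$, the twist action \eqref{actwist} yields $\alpha_i^{d_0} \cdot P_{i, x_0} = 1 + c_{i, 1} \alpha_i^{d_0} t + \cdots + c_{i, r_i} \alpha_i^{r_i d_0} t^{r_i}$, so in the standard coordinates $a_1, \ldots, a_{r_i}$ on $\sP_{r_i}$ the map $\phi_i^*$ sends $a_j$ to $c_{i, j} \alpha_i^{j d_0}$. In particular $\phi_i^*(a_{r_i}) = c_{i, r_i} \alpha_i^{r_i d_0}$ is a $k^\times$-multiple of $\alpha_i^{r_i d_0}$, so the image of $\phi_i^*$ contains the subring $k[\alpha_i^{\pm r_i d_0}]$. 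Since $k[\alpha_i^{\pm 1}]$ is a free $k[\alpha_i^{\pm r_i d_0}]$-module of rank $r_i d_0$ with basis $1, \alpha_i, \ldots, \alpha_i^{r_i d_0 - 1}$, it is a finite module over the image of $\phi_i^*$, so $\phi_i$ is finite.

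The main conceptual obstacle is the first reduction step: because $\sL_r(X)$ is not of finite type over $\Q$, one cannot directly invoke standard criteria like ``proper plus quasi-finite yields finite''. Comparing the images of $\rho_0^*$ and $\rho^*$ inside the source's coordinate ring sidesteps this issue by using the purely module-theoretic definition of finiteness for an affine morphism.
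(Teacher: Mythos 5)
Your proof is correct and takes essentially the same route as the paper: reduce to finiteness of the composite with a single projection $\pi_{x_0}$, then factor through $\sP_{r_1}\times_\Q\cdots\times_\Q\sP_{r_n}$ followed by the direct sum. The paper merely splits your map $\phi_1\times\cdots\times\phi_n$ one step further, as the power map $\psi_{\deg(x_0)}$ on $\G_m^n$ followed by the twist $(\beta_i)\mapsto(\beta_i\cdot P_i)$, whereas you combine the two and check finiteness by the explicit coordinate computation; both are the same argument.
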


\begin{proof}
In fact already the composition of $\rho$ with the projection to one factor $\sP_r$ of
$\sL_r(X)$, corresponding to a point $x\in |X|$, is finite. To see this write this morphism
as the composition of finite morphisms over $k$
\[
 \G_m^{n}   \xrightarrow{\psi_{\deg(x)}} \G_m^n  \xrightarrow{\cdot (P_1,\ldots , P_n)}
 \sP_{r_1}\times \cdots \times \sP_{r_n}  \xrightarrow{\oplus} \sP_r  .
\]
\end{proof}






\subsection{Moduli over curves}\label{sec.modcu}

In this section we prove Theorem~\ref{thm.moduli} for $\dim(X)=1$.
The dimension one case of Theorem~\ref{thm.finite1} was shown in Section~\ref{sec.prdim1}.
In particular we get:

\begin{lem} \label{lem.finitedec}
 There are up to
twist only {\em finitely many} irreducible direct summands of the sheaves $V\in
\dR_r(X,D)=\dV_r(X,D)$. 
\end{lem}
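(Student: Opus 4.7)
The plan is to deduce this lemma directly from the dimension-one case of Theorem~\ref{thm.finite1}, just established in Section~\ref{sec.prdim1} via the Langlands correspondence of Drinfeld--Lafforgue, combined with the additivity of the Swan conductor recorded in \eqref{Swprop1}.

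First, I would note that for a smooth geometrically connected curve $X$, the category $\Cu(X)$ consists essentially of $X$ itself, so the canonical map $\dR_r(X,D) \to \dV_r(X,D)$ is a bijection, which is why the two sets are identified in the statement. A class $V \in \dR_r(X,D)$ is represented by a semi-simple Weil sheaf and decomposes as a direct sum of irreducible lisse Weil sheaves $W_1,\ldots,W_k$ of ranks $s_1,\ldots,s_k$ with $\sum_i s_i = r$ and $1 \le s_i \le r$. By the additivity of the Swan conductor \eqref{Swprop1} applied at every closed point of $\bar X$, one has the inequality of effective divisors
\[
\Sw(W_i) \le \Sw(V) \le D,
\]
so each summand satisfies $W_i \in \dR_{s_i}(X,D)$.

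Now for each integer $s$ with $1 \le s \le r$, Theorem~\ref{thm.finite1} in dimension one (whose proof was completed in Section~\ref{sec.prdim1}) asserts that the set of irreducible classes in $\dR_s(X,D)$ is finite up to twist by elements of $\dR_1(\F_q)$. Taking the union of these finitely many finite sets over $s = 1, \ldots, r$ produces a finite set of twist-equivalence classes that exhausts every irreducible direct summand of every sheaf $V \in \dR_r(X,D)$, which is exactly the claim.

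There is no real obstacle here: the lemma is essentially an immediate corollary of the rank-by-rank form of Theorem~\ref{thm.finite1} on curves together with the direct-sum behavior of Swan conductors. The only minor point worth checking is that the twist equivalence used in Theorem~\ref{thm.finite1} (twist by $\dR_1(\F_q)$) is the same equivalence relation under which finiteness is asserted here, which is the case by construction.
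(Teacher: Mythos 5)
Your proof is correct and matches the paper's (implicit) argument: the paper simply asserts that the lemma follows from the dimension-one case of Theorem~\ref{thm.finite1} proved in Section~\ref{sec.prdim1}, and your write-up supplies exactly the two needed observations — that additivity of the Swan conductor \eqref{Swprop1} bounds the ramification of each irreducible summand by $D$, and that Theorem~\ref{thm.finite1} applied to each rank $s\le r$ yields finitely many twist classes.
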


\smallskip

{\em Step 1:}\\
Consider $V_1 \oplus \cdots \oplus V_n \in \dR_r(X,D)$ and the map
\begin{equation}\label{modcurmap1}
(\dR_1(\F_q))^n \to \sL_r(X)(\Ql) , \quad (\chi_1 , \ldots ,\chi_n) \mapsto \kappa(  \chi_1\cdot
V_1 \oplus \cdots \oplus \chi_n\cdot .
V_n )
\end{equation}
 This map is just the induced map on $\Ql$-points of the finite scheme morphism over
$k=\Ql$ from Lemma~\ref{lem.finitemor}, where we take $P_i=\kappa(V_i)$.
By Proposition~\ref{points} there is a unique reduced closed subscheme $L(V_i)$   of
$\sL_r(X)\otimes \Ql$ of finite type over $\Ql$
such that $L(V_i)(\Ql)$ is the image of the map \eqref{modcurmap1}.

\smallskip

{\em Step 2:}\\
By Lemma~\ref{lem.finitedec} there are only finitely many  direct sums
\begin{equation}\label{eqsumdec}
V_1 \oplus
\cdots \oplus V_n \in \dR_r(X,D)
\end{equation}
 with $V_i$ irreducible  up to  twists  $\chi_i \mapsto  \chi_i \cdot V_i$ with $\chi_i \in \dR_1(\F_q)$.
Let $$L_r(X,D)_\Ql\hookrightarrow \sL_r(X)\otimes_\Q \Ql$$ be the reduced scheme, which is
the union of the finitely many closed subschemes $L(V_i)\hookrightarrow
\sL_r(X)\otimes_\Q \Ql$ corresponding to representatives  of the finitely many twisting
classes of direct sums \eqref{eqsumdec}.
Clearly $ L_r(X,D)_\Ql(\Ql) = \kappa(\dR_r(X,D))$ and $ L_r(X,D)_\Ql$ is of finite type over $\Ql$.

\smallskip

{\em Step 3:}\\
By Corollary~\ref{vir.compan} the automorphism group ${\rm Aut}(\Ql/\Q)$ acting on
$\sL_r(X)$ stabilizes $\kappa(\sR_r(X,D))$. Therefore
by the descent Proposition~\ref{descent} the scheme $L_r(X,D)_\Ql\hookrightarrow \sL_r(X)\otimes_\Q
\Ql$ over $\Ql$ descends to a closed
subscheme $L_r(X,D)\hookrightarrow \sL_r(X)$. This is the {\em moduli space of $\ell$-adic
sheaves} on curves, the existence of which was
claimed in Theorem~\ref{thm.moduli}.

\medskip

From the proof of Lemma~\ref{lem.finitemor} and the above construction we deduce:

\begin{lem}\label{lem.finmorpro}
For any $x\in |X|$ the composite map $$L_r(X,D) \to \sL_r(X) \xrightarrow{\pi_x}   \sP_r$$ is a finite
morphism of schemes.
\end{lem}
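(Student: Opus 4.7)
The plan is to reduce the statement to the pieces $L(V_i)$ used in the construction of $L_r(X,D)$ and to leverage the finiteness assertion already extracted in the proof of Lemma~\ref{lem.finitemor}.

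First, since $L_r(X,D) \hookrightarrow \sL_r(X)$ is defined over $\Q$ and finiteness of a morphism of schemes descends along the faithfully flat base change $\Spec \Ql \to \Spec \Q$, it suffices to prove that $L_r(X,D)\otimes_\Q \Ql \to \sP_r \otimes_\Q \Ql$ is finite. By Step 2 of Section~\ref{sec.modcu}, the scheme $L_r(X,D) \otimes_\Q \Ql$ is the union of the finitely many closed subschemes $L(V_i) \subset \sL_r(X)\otimes_\Q \Ql$, each the reduced scheme-theoretic image of the finite morphism $\rho_i : \G_m^{n_i} \to \sL_r(X) \otimes_\Q \Ql$ provided by Lemma~\ref{lem.finitemor}. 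A finite union of closed subschemes each finite over a common target is itself finite over that target (for example, the disjoint union $\bigsqcup_i L(V_i) \to L_r(X,D) \otimes_\Q \Ql$ is a finite surjection, reducing this to the descent step carried out below), so I am reduced to showing that for each fixed $i$ the composite $L(V_i) \to \sP_r \otimes_\Q \Ql$ is finite.

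Fix $i$ and factor $\rho_i$ as $\G_m^{n_i} \xrightarrow{\bar\rho_i} L(V_i) \hookrightarrow \sL_r(X) \otimes_\Q \Ql$. Since $\rho_i$ is finite and $L(V_i) \hookrightarrow \sL_r(X) \otimes_\Q \Ql$ is a monomorphism, $\bar\rho_i$ is finite; by construction it is also surjective, $L(V_i)$ being the reduced image of $\rho_i$. On the other hand, the proof of Lemma~\ref{lem.finitemor} exhibits the composite
\[
\pi_x \circ \rho_i : \G_m^{n_i} \xrightarrow{\psi_{\deg(x)}} \G_m^{n_i} \xrightarrow{\cdot (f_{V_1^{(i)}}(x),\ldots, f_{V_{n_i}^{(i)}}(x))} \sP_{r_1^{(i)}} \times \cdots \times \sP_{r_{n_i}^{(i)}} \xrightarrow{\oplus} \sP_r \otimes_\Q \Ql
\]
as a composition of three finite morphisms, hence finite. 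Since the finite surjection $\bar\rho_i$ composed with $\pi_x|_{L(V_i)} : L(V_i) \to \sP_r \otimes_\Q \Ql$ equals $\pi_x \circ \rho_i$, I invoke the standard fact that a morphism $f$ whose precomposition $f \circ h$ with a finite surjection $h$ is finite is itself finite. (Properness of $f$ follows from properness of $fh$ by surjectivity of $h$, quasi-finiteness of $f$ follows from that of $fh$ by the same token, and then $f$ is finite since it is proper and quasi-finite; affineness is automatic as both source and target are affine.) This gives finiteness of $L(V_i) \to \sP_r \otimes_\Q \Ql$, and by the reduction in the first paragraph completes the argument.

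The only real content beyond Lemma~\ref{lem.finitemor} is this descent along a finite surjection, which is essentially bookkeeping; the main conceptual point is simply that the finiteness assertion was already present in the proof of Lemma~\ref{lem.finitemor} at the level of $\G_m^{n_i}$, and survives passage to the reduced image.
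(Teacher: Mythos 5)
Your argument is correct and is precisely the fleshing-out of what the paper leaves implicit ("From the proof of Lemma~\ref{lem.finitemor} and the above construction we deduce:"). You base change to $\Ql$ by faithfully flat descent, reduce to the finitely many $L(V_i)$ via the decomposition in Step~2, and then transfer the finiteness of $\pi_x \circ \rho_i$ (exhibited in the proof of Lemma~\ref{lem.finitemor}) to $L(V_i)$ using cancellation along the finite surjection $\G_m^{n_i} \twoheadrightarrow L(V_i)$ — exactly the intended route, with the cancellation step the only genuine content beyond what is already recorded.
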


\subsection{Higher dimension}\label{sec.modhigh}

Now the dimension $d=\dim(X)$ of $X\in \Sm_{\F_q}$ is allowed to be arbitrary. In order to prove Theorem~\ref{thm.moduli} in
general we first construct a closed subscheme $L_r(X,D) \hookrightarrow \sL_r(X)$ such
that $$L_r(X,D)(\Ql)=\kappa(\dV_r(X,D))$$ relying on Theorem~\ref{thm.moduli} for curves.
However from this construction it is not clear that $L_r(X,D)$ is of finite type over $\Q$.
The main step is to show that it is of finite type using Theorem~\ref{frob.thm}.

\smallskip

{\em Step 1:}\\
We define the reduced closed subscheme $L_r(X,D)\hookrightarrow \sL_r(X) $ by the Cartesian
square (in the category of reduced schemes)
\[
\xymatrix{
L_r(X,D)  \ar[r] \ar[d] &   \sL_r(X)  \ar[d] \\
\prod\limits_{C\in \Cu(X)} L_r(C,\bar \phi^*(D)) \ar[r]   & \prod\limits_{C\in \Cu(X)} \sL_r(C)
}
\]
where $\Cu(X)$ is defined in Section~\ref{ss.strong}. 
Clearly, from the curve case of Theorem~\ref{thm.moduli} and the definition of $\dV_r(X,D)$ we get $$L_r(X,D)(\Ql)=\kappa(\dV_r(X,D)).$$
 In addition, as $\sL(X)\to  \prod\limits_{C\in \Cu(X)} \sL_r(C)$ is a closed immersion, so is $\sL(X,D)\to \prod\limits_{C\in \Cu(X)} L_r(C,\bar \phi^*(D))$.

\smallskip

{\em Step 2:}\\
Let $C$ be a purely one-dimensional scheme which is separated and of finite type over
$\F_q$. Let $\phi_i:E_i \to C$ ($i=1,\ldots ,m$) be the normalizations of the irreducible
components of $C$  and let $\phi: E = \coprod_i E_i \to C$ be the disjoint union. Let $D\in \Div^+(\bar E)$
be an effective divisor  with supports in
$\bar E \setminus E$. Here $\bar E$ is the canonical smooth compactification of
$E$. 
Define the reduced scheme $L_r(C,D)$  by the Cartesian square (in the category of reduced schemes)
\[
\xymatrix{
L_r(C,D)  \ar[r] \ar[d] &   \prod_{j=1 , \ldots , m} L_r(E_j,D_j)  \ar[d] \\
  \prod_{i=1 , \ldots , m} L_r(E_i,D_i) \ar[r] &   \prod_{i\ne j}
  L_r((E_i\times_C E_j)_{\rm red} ) 
}
\]

\smallskip

{\em Step 3:}\\
By an {\em exhaustive system of curves} on $X$ we mean a sequence $(C_n)_{n\ge 0}$ of 
purely one-dimensional closed subschemes $C_n\hookrightarrow X$ with the
properties (a) -- (d) listed below. We write $\phi: E_n \to X$ for the normalization of $C_n$. For a divisor $D'\in
\Div^+(\bar E_n)$ we let $\sC_{D'}$ be the maximum of the complexities of the irreducible
components of $E_n\otimes \F$, see Definition~\ref{def.complex}. 
\begin{itemize}
\item[(a)]  $C_n\hookrightarrow C_{n+1}$ for $n\ge 0$,
\item[(b)] $E_n(\F_{q^n})  \to X(\F_{q^n})$ is surjective,
\item[(c)] the fields of constants of the irreducible components of $E_n$ ($n \ge 0$) are
  bounded,
\item[(d)] the complexity $\sC_{\bar \phi_n^*(D)}$ of $E_n$ satisfies
\[
\sC_{\bar \phi_n^*(D)} = O(n).
\]
\end{itemize}

\begin{lem}\label{lem.excurve}
 Any
 $X\in \Sm_{\F_q}$ admits an exhaustive system of curves.
\end{lem}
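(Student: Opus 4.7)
The plan is to construct the sequence $(C_n)$ inductively by adding, for each new closed point of $X$ of degree $\le n$ over $\F_q$, an $\F_q$-rational smooth curve through it, built as a complete intersection of hypersurfaces of controlled degree inside a projective embedding of $\bar X$.

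Fix a closed immersion $\iota : \bar X \hookrightarrow \P^M_{\F_q}$; since $\bar X$ has dimension $d := \dim X$, automatically $M \ge d$. Set $\delta_n := \lceil (M!\, n)^{1/M}\rceil$, so that $\binom{M+\delta_n}{M}$ exceeds $n$. Inductively suppose $C_{n-1}$ has been constructed. For each closed point $x \in X$ with $m := \deg_{\F_q}(x) \le n$ whose $\F_{q^n}$-orbit is not yet contained in $E_{n-1}(\F_{q^n})$, we attach a curve $C_x := X \cap H_1^x \cap \cdots \cap H_{d-1}^x$, where each $H_i^x$ is an $\F_q$-rational hypersurface of degree $\delta_n$ in $\P^M$ containing the Galois orbit of $x$. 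The $\F_q$-hypersurfaces through this orbit form a linear subspace of codimension $\le m \le n$ in the $\binom{M+\delta_n}{M}$-dimensional space $H^0(\P^M_{\F_q}, \sO(\delta_n))$, which is positive-dimensional over $\F_q$ by our choice of $\delta_n$.

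The main obstacle is to select $H_1^x, \ldots, H_{d-1}^x$ in this linear system so that the resulting $\bar X \cap H_1^x \cap \cdots \cap H_{d-1}^x$ is smooth, geometrically irreducible, and not contained in $\bar X \setminus X$. Smoothness follows from a Bertini-type argument applied iteratively to the restricted linear system of degree-$\delta_n$ hypersurfaces through the Galois orbit of $x$: the open locus of transverse hypersurfaces has $\F_q$-rational members once $\delta_n$ is large enough, which we guarantee either by Lang--Weil estimates or by Poonen's version of Bertini over finite fields. Geometric irreducibility follows from Bertini connectedness applied to the first $d-2$ intersections (each of which remains smooth of dimension $\ge 2$), and non-containment in $\bar X \setminus X$ is an additional open condition.

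Setting $C_n := C_{n-1} \cup \bigcup_x C_x$ with the union running over the new $x$'s, properties (a) and (b) hold by construction. For (c), each $C_x$ is geometrically irreducible over $\F_q$, so its normalization has field of constants $\F_q$; consequently every irreducible component of $E_n$ has field of constants $\F_q$, bounded uniformly in $n$. For (d), the curve $C_x$ has projective degree $\delta_n^{d-1}$ in $\P^M$, arithmetic genus $O(\delta_n^d)$ by the adjunction formula on $\bar X$, and the pullback divisor satisfies $\deg(\bar\phi_n^*(D)|_{C_x}) \le \deg_{\P^M}(D)\cdot \delta_n^{d-1}$; hence the complexity of each component is $O(\delta_n^d) = O(n^{d/M}) = O(n)$ since $M \ge d$.
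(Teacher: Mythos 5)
Your construction is a genuinely different route from the one the paper takes. Deligne's proof (as presented in the paper) avoids Bertini over finite fields entirely: it uses Noether normalization and Kedlaya's theorem to get finitely many finite morphisms $\bar\eta_s:\bar X\to \P^d$ whose \'etale loci cover $X$; then, given a closed point $x$ of degree $n$ and $y=\eta_s(x)\in\P^d(\F_{q^n})$, it \emph{explicitly} writes down a rational curve $\phi_y:\P^1\to\P^d$ of degree $<n$ through $y$ by lifting the coordinate homomorphism $\F_q[T_1,\dots,T_d]\to\F_{q^n}$ through a generator of $\F_{q^n}/\F_q$; finally it pulls back along $\eta_s$ and applies Riemann--Hurwitz for the complexity bound. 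No choice of hypersurfaces, no Bertini-type existence statement, and the degree bound is immediate from the lifting.

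Your complete-intersection approach has a real gap at the Bertini step. You set $\delta_n=\lceil(M!\,n)^{1/M}\rceil$ precisely so that $\binom{M+\delta_n}{M}\gtrsim n$; after imposing the $\le n$ linear conditions from the Galois orbit of $x$, the residual linear system of degree-$\delta_n$ forms through the orbit then has dimension only $O(1)$ over $\F_q$. That is far too small: Poonen's finite-field Bertini (for smoothness) and Charles--Poonen (for irreducibility) are asymptotic density statements in the degree of the forms, with thresholds that depend on the imposed subscheme and on the variety; they do not produce a good member of an $\F_q$-linear system with a bounded number of $\F_q$-points. A priori the handful of $\F_q$-hypersurfaces through the orbit could all fail to meet $X$ transversally or all produce geometrically reducible curves. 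To fix this you would need $\delta_n$ large enough that the residual system grows (say $\delta_n = C\,n^{1/M}$ with $C$ large, or $\delta_n\sim n^{1/(M-1)}$, possibly after a Veronese re-embedding to make $M$ large), \emph{and} you would have to verify that the effective thresholds in Poonen and Charles--Poonen are compatible with passing through up to $n$ prescribed points; neither is carried out. Separately, your ``Bertini connectedness'' argument for geometric irreducibility is not a classical Bertini statement over a finite field — that really is Charles--Poonen, and it is a much heavier tool than anything used in the paper. The remaining pieces (field of constants $=\F_q$ from geometric irreducibility, non-containment in $\bar X\setminus X$ because $x\in C_x\cap X$, genus via the Koszul/Castelnuovo bound, degree of $\bar\phi^*D$ dominated by an ample comparison) are fine, but the existence of the curves $C_x$ is the heart of the lemma and it is exactly there that the details are missing.
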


The proof of the lemma is given below.

 \medskip

Let now $(C_n)$ be an exhaustive system of curves on $X$. Set $D_n=\bar \phi_n^*(D)\in \Div^+(\bar E_n)$. An immediate consequence of (a)--(d) and the Riemann hypothesis for curves is that for $n\gg 0$
any irreducible component of $C_{n+1}$ meets $C_n$. This implies by Lemma~\ref{lem.finmorpro} that the
tower of affine schemes of finite type over $\Q$
\[
\cdots \to L_r(C_{n+1},D_{n+1}) \xrightarrow{\tau}  L_r(C_{n},D_{n}) \to \cdots
\]
has finite transition morphisms.
 Clearly, $L_r(X,D)$ maps to this tower. Since the complexities of the irreducible curves
 grow linearly in $n$ and the fields of constants are bounded,  Theorem~\ref{frob.thm} implies that there is
 $N\ge 0$ such that the map
\[
 L_r(C_{n+1},D_{n+1})(\Ql) \to \sL_r^{\le n}(E_{n+1})
\]
is injective for $n\ge N$. As this map factors through 
\[
\tau:  L_r(C_{n+1},D_{n+1})(\Ql) \to  L_r(C_{n},D_{n}) (\Ql)
\]
by (b), we get injectivity of $\tau $ on $\Ql$-points for $n\ge N$.
 Consider the
intersection of the images $$I_n= \bigcap_{i\ge 0} \tau^{i}(L_r(C_{n+i}, D_{n+i})
)\hookrightarrow   L_r(C_{n},D_{n}) , $$
endowed with the reduced closed subscheme structure. Then the transition maps in the tower
\[
\cdots \to I_{n+1} \to I_n \to \cdots
\]
are finite and induce bijections on $\Ql$-points for $n\ge N$. By Proposition~\ref{proinj} we get an $N'\ge
0$ such that $I_{n+1}\to I_n$ is an isomorphism of schemes for $n\ge N'$. 
The closed immersion $L_r(X,D)\to  \sL_r(X)$ factors through
$L_r(X,D)\to \varprojlim_n  L_r(C_{n},D_{n})$, which is therefore itself a closed immersion. Thus 
 we obtain  a
closed immersion
\[
L_r(X,D) \to \varprojlim_n  L_r(C_{n},D_{n}) \cong \varprojlim_n I_n \xrightarrow{\simeq} I_{N'},
\]
and therefore $L_r(X,D)$ is of finite type over $\Q$.

\begin{proof}[Proof of Lemma~\ref{lem.excurve}]
Using Noether normalization we find a finite number of finite surjective morphisms 
\[
\bar \eta_s : \bar X \to \P^d, \quad s=1,\ldots  , w
\]
with the property that every point $x\in |X|$ is in the \'etale locus of one of the $\eta_s=
\bar \eta_s |_{X}$. See \cite[Theorem~1]{Kedlaya} for more details.

\begin{claim}
For a point $y\in \P^d(\F_{q^n})$ there is a morphism $\phi_y: \P^1\to
\P^d$ of degree $< n$ with $y\in \phi_y(\P^1(\F_{q^n}))$.
\end{claim}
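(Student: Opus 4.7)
The plan is as follows. Since every finite extension of a finite field is simple, I would choose a generator $\alpha$ of $\F_{q^n}$ over $\F_q$, with minimal polynomial $f \in \F_q[s]$ of degree $n$. Then every element of $\F_{q^n}$ can be uniquely written as $P(\alpha)$ for some $P \in \F_q[s]$ with $\deg P < n$, via the basis $1,\alpha,\ldots,\alpha^{n-1}$. The idea is simply to build $\phi_y$ as an $\F_q$-polynomial parametrization of the affine line that sends $\alpha$ to $y$, and then projectivize.

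Concretely, I would write $y = [y_0 : \cdots : y_d]$ with $y_i \in \F_{q^n}$, choose $P_i \in \F_q[s]$ of degree $< n$ with $P_i(\alpha) = y_i$, and form the homogenizations $\tilde P_i(s,t) := t^{n-1} P_i(s/t) \in \F_q[s,t]$, each homogeneous of degree $n-1$. The tuple $(\tilde P_0,\ldots,\tilde P_d)$ defines a rational map $\P^1 \dashrightarrow \P^d$ over $\F_q$. Dividing out by the $\gcd$ of the $\tilde P_i$ gives a genuine morphism $\phi_y : \P^1 \to \P^d$ of degree at most $n-1 < n$, and the point $[\alpha : 1] \in \P^1(\F_{q^n})$ maps to $[P_0(\alpha) : \cdots : P_d(\alpha)] = y$, as desired.

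The only point to verify is that $\phi_y$ is actually regular at $[\alpha:1]$, but this is automatic: picking $i_0$ with $y_{i_0}\ne 0$, one has $\tilde P_{i_0}(\alpha,1) = P_{i_0}(\alpha) = y_{i_0} \ne 0$, so the $i_0$-th coordinate remains nonzero at $[\alpha:1]$ even after cancelling any common factor of the $\tilde P_i$. I do not expect any real obstacle here; the whole argument is just the observation that expressing coordinates of $y$ in the basis $1,\alpha,\ldots,\alpha^{n-1}$ of $\F_{q^n}/\F_q$ converts an $\F_{q^n}$-point of $\P^d$ into an $\F_q$-polynomial map $\P^1\to\P^d$ of degree $<n$ passing through $y$. (The trivial case $n=1$ is handled by any constant morphism to $y$.)
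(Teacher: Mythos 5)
Your proof is correct and is essentially the same argument as the paper's: both express the coordinates of $y$ as values at a primitive element $\alpha$ of $\F_{q^n}/\F_q$ of polynomials of degree $<n$ over $\F_q$, yielding an $\F_q$-rational parametrization $\P^1\to\P^d$ of degree $<n$ through $y$. The paper works in an affine chart and then invokes projective completion where you homogenize directly and check the common-factor issue explicitly; this is only a cosmetic difference.
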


\begin{proof}[Proof of Claim]
The closed point $y$ lies in an affine chart $$\A^d_{\F_q} =\Spec(\F_q [T_1,\ldots, T_d] )\hookrightarrow \P^d_{\F_q}$$
and gives rise
to a
homomorphism $\F_q [T_1,\ldots, T_d] \to \F_{q^n}$. We choose an embedding
$ \Spec \F_{q^n} \hookrightarrow \A^1_{\F_q}=\Spec(\F_q[T])$ and a lifting
$$\F_q [T_1,\ldots, T_d]\to \F_q[T] $$ with $\deg(\phi(T_i))< n$ ($1\le
i\le d$).
By projective completion we obtain a morphism $\phi_y: \P^1_{\F_q} \to \P^d_{\F_q}$
of degree less than $n$ factoring the morphism  $y\to \P^d$.

\end{proof}

For $x\in |X|$ of degree $n$ choose a lift $x\in X(\F_{q^n})$ and an $s$ such that $x$ is
in the \'etale locus of $\eta_s$. Furthermore choose $\phi_y:\P^1 \to \P^d$ as in the claim with
$y=\eta_s(x)$. Clearly $x$ lifts to a smooth point of
$(\P^1\times_{\P^d} X) (\F_{q^n}) $ contained in an irreducible component which we call
$Z$. Let $\phi_x:C_x\to X$ be the normalization of the image of $Z$ in $X$. Then $x\in
\phi_x(C_x(\F_{q^n}))$.

\medskip

We assume now that we have made the choice of the curve $\phi_x:C_x\to X$ above for any
point $x\in |X|$. As usual $\bar \phi_x:\bar C_x\to \bar X$ denotes the smooth
compactification of $C_x$.
From the Riemann-Hurwitz  formula \cite[Cor.~2  ]{Har} we deduce the growth property
\[
\sC_{\bar \phi_x^*(D)} = \sO( \deg(x) )
\]
for the complexity of $\bar C_x$. Furthermore it is clear that the fields of constants of
the curves $C_x$ are bounded. Therefore the subschemes
\[
C_n = \bigcup_{\deg(x)\le n} \phi_x(C_x)  \hookrightarrow X
\]
satisfy the conditions (a)--(d) above.
\end{proof}

\section{Irreducible components and proof of finiteness theorems}\label{sec.irrcomp}

Recall that we defined irreducible 2-skeleton  sheaves  in Section~\ref{sec.intro} and that in Section~\ref{sec.moduli} we constructed an affine scheme 
$L_r(X,D)$ of finite type over $\Q$, the $\bar{\Q}_\ell$-points of which are in bijection
with 2-skeleton sheaves of rank $r$ with ramification bounded by $D$.
For this we had to assume that $\bar X$ is a normal projective variety defined over $\F_q$ and $D$ is an effective Cartier divisor supported in $\bar X\setminus X$.

The following theorem describes the irreducible components of $L_r(X,D)$ over $\bar \Q$ or, what
is the same, over $\Ql$. 

\begin{thm} \label{irrcpt}
\begin{itemize}
\item[1)] Given $V_1,\ldots, V_m$ irreducible in $\dV(X)$ such that $V_1\oplus \ldots \oplus V_m\in \sV_r(X,D)$, there is a unique irreducible component $Z\hookrightarrow L_r(X,D)\otimes \bar{\Q}$ such that 
\ga{cpt}{   Z(\bar{\Q}_\ell)=\{\kappa(\chi_1\cdot V_1\oplus \ldots \oplus \chi_m\cdot
  V_m)\, |\, \chi_i\in \dR_1(\F_q)\} }
\item[2)] If $Z\hookrightarrow L_r(X,D)\otimes \bar{\Q}$ is an irreducible component, then there are $V_1,\ldots, V_m$ irreducible in $\dV(X, D)$  such that \eqref{cpt} holds true.

\end{itemize}
\end{thm}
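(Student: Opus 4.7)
The plan is to realize every candidate component as the scheme-theoretic image of the finite twist-and-sum morphism of Lemma~\ref{lem.finitemor}. Given irreducibles $V_1,\ldots,V_m\in \dV(X)$ with $V_1\oplus\cdots\oplus V_m\in \dV_r(X,D)$, fix representatives $P_i=\kappa(V_i)\in \sL_{r_i}(X)(\Ql)$ and form the finite morphism
\[
\rho_{V_\bullet}: \G_m^m\otimes \Ql \longrightarrow \sL_r(X)\otimes \Ql,\qquad (\alpha_i)\mapsto \bigoplus_i \alpha_i\cdot P_i,
\]
and let $Z(V_\bullet)$ be its reduced scheme-theoretic image. Then $Z(V_\bullet)$ is irreducible (as the image of $\G_m^m$), and because twisting by an element of $\dR_1(\F_q)$ leaves the Swan conductor unchanged, one has $\bigoplus\chi_i\cdot V_i\in \dV_r(X,D)$ for any $\chi_i$; hence $Z(V_\bullet)(\Ql)$ equals the right-hand side of \eqref{cpt} and $Z(V_\bullet)$ is a closed subscheme of $L_r(X,D)\otimes \Ql$.

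With this construction in hand, I would establish three facts. First, since every $V\in \dV_r(X,D)$ is semisimple by definition, it decomposes into irreducibles, so the sets $Z(V_\bullet)(\Ql)$ cover $L_r(X,D)(\Ql)$. Second, if $Z(V_\bullet)\subseteq Z(V'_\bullet)$, then the $\Ql$-point $\kappa(V_1\oplus\cdots\oplus V_m)\in Z(V_\bullet)$ equals $\kappa(\bigoplus\chi'_i\cdot V'_i)$ for some $\chi'_i$; by injectivity of $\kappa$ (Proposition~\ref{bas.charpoly}) together with Jordan--H\"older uniqueness for 2-skeleton sheaves, the tuples $(V_i)$ and $(\chi'_iV'_i)$ coincide up to reordering, whence $Z(V_\bullet)=Z(V'_\bullet)$. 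Third, by Theorem~\ref{thm.moduli} the scheme $L_r(X,D)$ is of finite type over $\Q$, so $L_r(X,D)\otimes\bar\Q$ has only finitely many irreducible components; combining this with the previous two points forces the irreducible components of $L_r(X,D)\otimes\bar\Q_\ell$ to be exactly the distinct $Z(V_\bullet)$, which proves both (1) and (2). Descent from $\bar\Q_\ell$ to $\bar\Q$ is legitimate since the set $\kappa(\dV_r(X,D))$ is stable under ${\rm Aut}(\bar\Q_\ell/\Q)$ by Corollary~\ref{vir.compan}, and uniqueness of the component in (1) follows from the Nullstellensatz-type Proposition~\ref{HNS} cited in Section~\ref{sec.moduli}.

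The principal obstacle will be the maximality step, specifically Jordan--H\"older uniqueness for 2-skeleton sheaves. Unlike genuine lisse sheaves, elements of $\dV_r(X)$ are only a compatible system of data on curves, so uniqueness of decomposition has to be bootstrapped from the curve case: I would pick, using Proposition~\ref{propcurveex}, a single $C\in \Cu(X)$ on which all the $V_i|_C$ remain irreducible and are pairwise non-isomorphic up to the twists involved, and then transfer uniqueness of the curve decomposition back to $X$. Once this combinatorial uniqueness is available, the rest of the argument is essentially a formal interplay between the finite-type property of $L_r(X,D)$, the irreducibility of $\G_m^m$, and the finiteness of the morphism $\rho_{V_\bullet}$.
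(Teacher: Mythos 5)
Your plan shares the structural idea with Deligne's proof — realize every candidate component as the finite image $Z(V_\bullet)$ of a twist-and-sum morphism $\G_m^m\to\sL_r(X)$, then identify these with the irreducible components — and your steps~1 and~2 (covering of the $\Ql$-points and pairwise incomparability of the $Z(V_\bullet)$) are sound and do appear in the paper. But the ``combining'' in step~3 hides a genuine gap: knowing (a) that $L_r(X,D)\otimes\bar\Q$ has finitely many components, (b) that the $Z(V_\bullet)(\Ql)$ cover $L_r(X,D)(\Ql)$, and (c) that distinct $Z(V_\bullet)$ are incomparable under inclusion, does \emph{not} force each component to equal some $Z(V_\bullet)$. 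For instance $\A^1_{\Ql}$ has a single component, and its $\Ql$-points are covered by the pairwise incomparable closed subsets $\{a\}$, $a\in\Ql$, yet none of these is the component. You would need to know a priori that only finitely many $Z(V_\bullet)$ occur, but that is precisely the content of Theorem~\ref{thm.finite2}, which is deduced \emph{from} the present theorem — so this cannot be invoked.

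The missing ingredient, and the real heart of the paper's argument, is a dimension count at the generic point. For a component $Z$ of dimension $d$, take a geometric generic point $\kappa(V)\in Z(\Ql)$, so $\iota:\bar\Q(Z)\hookrightarrow\Ql$ has image of transcendence degree $d$. Decompose $V=\oplus_{w\in\sW}V_w$ by Corollary~\ref{corwdec} and let $m$ be the number of nonzero summands. Since the classes $w$ with $V_w\neq 0$ generate, modulo algebraic numbers, the field spanned by the Frobenius eigenvalues of $V$ (which has transcendence degree $d$ over $\bar\Q$), one gets $m\geq d$; since the finite morphism $\G_m^m\to\sL_r(X)$ built from this decomposition has irreducible $m$-dimensional image contained in $Z$, one gets $m\leq d$. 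Hence $m=d$ and the twist orbit of the generic $V$ fills up $Z$. This is what makes the covering argument close, and it is absent from your write-up.

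A secondary point: your plan to establish Jordan--H\"older uniqueness for 2-skeleton sheaves by finding a single curve $C$ on which all the $V_i|_C$ stay irreducible and pairwise distinct leans on Proposition~\ref{propcurveex}, which is stated and proved for genuine lisse sheaves $V\in\dR_r(X)$, not for abstract elements of $\dV_r(X)$. There is no a priori reason one curve should witness irreducibility or separate twist classes for a 2-skeleton sheaf that need not come from a representation of $\pi_1(X)$. Deligne avoids this entirely: after twisting the $V_i$ to lie in pairwise distinct classes $w_i\in\sW$, the intrinsic decomposition of Corollary~\ref{corwdec} separates the summands pointwise, which is all that is needed to compare two decompositions of the same 2-skeleton sheaf. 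You should use the weight-class decomposition in the same way rather than a single-curve reduction.
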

\begin{proof}
We first prove 2). 
Let $d$ be the dimension of $Z$, so $\bar{ \Q}(Z)$ has transcendence degree $d$
over $\bar{\Q}$. Let   
 $\kappa(V)\in Z(\bar{\Q}_\ell)$ be a geometric generic point, corresponding to
$\iota: \bar{\Q}(Z)\hookrightarrow \bar{\Q}_\ell$.

By definition, the coefficients of the local polynomials $f_V(x), \ x\in |X|$
span $\iota(\bar{\Q}(Z))$.  The subfield $K$ of $\bar{\Q}_\ell$ spanned by the
(inverse) roots of the $f_V(x)$ is algebraic over  $\iota(\bar{\Q}(Z))$, and
thus has transcendence degree $d$
over $\bar{\Q}$ as well.

 Writing
\ga{eq.decweight}{V=\oplus_{w\in \sW} V_w}
thanks to Corollary~\ref{corwdec},  the number
$m $ of 
 such $w$ with $V_w\neq 0$ is $\ge d $. Indeed those $w$ have the property that
they span $K$.

On the other hand,
the map \eqref{modcurmap1} corresponding to the decomposition \eqref{eq.decweight}
 is the $\bar{ \Q}_\ell$-points of a finite map with source $\G_m^m$, which
is irreducible, and has image contained in $Z$. So we conclude $ m =d $ and
that the morphism $\G_m^m\to Z$ is finite surjective.

We prove 1). By Corollary~\ref{corwdec}, the $V_i$ have  the property that there
is a $w_i\in \sW$ such that all the inverse eigenvalues of the  Frobenius
$F_x$ on $V_i$ lie in the class of $w_i$. Replacing $V_i$ by $\chi_i\cdot V_i$
for adequately chosen  $\chi_i \in \sR_1(\F_q)$, we may assume that  $w_i\neq
w_j$ in $\sW$ if $i\neq j$.  We consider the irreducible reduced closed
subscheme $Z\hookrightarrow L_r(X,D)\otimes \Ql$ defined by its $\Ql$-points $
\{\kappa(\chi_1\cdot V_1\oplus \ldots \oplus \chi_m\cdot V_m) \ |  \ \chi_i\in
\dR_1(\F_q)     \}$.   Let $ Z'$ be an irreducible component of $L_r(X,D)\otimes \Ql$  containing $Z$. 
Thus by B), $$Z'(\bar{\Q}_\ell)=\{\kappa(\chi'_1\cdot V_1'\oplus \ldots
\chi'_{m'}\cdot V'_{m'}) \ | \ \chi'_i\in \sR_1(\F_q)\}.$$
So there are $\chi'_i$ such that 
\ga{decomp}{ V_1\oplus \ldots \oplus V_m= \chi'_1V'_1\oplus \ldots \oplus
\chi'_{m'}V'_{m'}    }
  As $V'_j$ is irreducible for any $j \in \{1, \ldots, m'\}$, it is
of class $w$ for some $w\in \sW$ in the sense of Corollary~\ref{corwdec}.
So for each $j\in \{1,\ldots, m'\}$, there is a $i\in \{1,\ldots, m\}$ with
 $\chi'_j\cdot V'_j\subset V_i$, and thus $\chi'_j\cdot V'_j = V_i$
as $V_i$ is irreducible. This implies $m=m'$ and the decompositions
\eqref{decomp}  are the same, up to ordering. 
So $Z=Z'$.
\end{proof}

\begin{cor} \label{cor.irr}
A 2-skeleton sheaf $V\in \sV_r(X,D)$ is irreducible if and only if $\kappa(V)$ lies on a
one-dimensional 
irreducible component of $L_r(X,D)\otimes \Ql$. 
In this case $\kappa(V)$ lies on a unique irreducible component $Z/\Ql$. The component $Z$ has the form
$$Z(\Ql) =  \{ \kappa( \chi\cdot V )\, |\,  \chi\in \dR_1(\F_q) \}$$
 and it does not meet any other irreducible component.
\end{cor}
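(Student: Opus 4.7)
The strategy is to derive the corollary as a direct consequence of Theorem~\ref{irrcpt}, which classifies the irreducible components of $L_r(X,D) \otimes \bar{\Q}$ (equivalently of $L_r(X,D) \otimes \Ql$, since these components are geometrically irreducible) in terms of decompositions into irreducibles.

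Suppose first that $V \in \sV_r(X,D)$ is irreducible. Applying Theorem~\ref{irrcpt} 1) to the trivial decomposition ($m=1$, $V_1 = V$) produces a unique irreducible component $Z \subset L_r(X,D) \otimes \Ql$ satisfying $Z(\Ql) = \{\kappa(\chi \cdot V) : \chi \in \dR_1(\F_q)\}$. By the construction in that proof, combined with Lemma~\ref{lem.finitemor}, $Z$ is the image of the finite morphism $\G_m \to \sL_r(X) \otimes \Ql$, $\alpha \mapsto \alpha \cdot \kappa(V)$; finite morphisms preserve dimension, so $\dim Z = 1$.

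For the converse and the uniqueness in one stroke, suppose $\kappa(V) \in Z'$ for some irreducible component $Z'$. By Theorem~\ref{irrcpt} 2), there exist irreducible $V_1, \ldots, V_m \in \sV(X,D)$ with $Z'(\Ql) = \{\kappa(\chi_1 V_1 \oplus \cdots \oplus \chi_m V_m) : \chi_i \in \dR_1(\F_q)\}$, and the proof of that theorem exhibits a finite surjective morphism $\G_m^m \to Z'$ (after the normalization placing the $V_i$ in pairwise distinct weight classes), whence $\dim Z' = m$. Writing $V = \chi_1 V_1 \oplus \cdots \oplus \chi_m V_m$ for suitable $\chi_i$: if $\dim Z' = 1$ then $m = 1$ and $V = \chi_1 V_1$ is a twist of an irreducible 2-skeleton sheaf, hence itself irreducible; conversely, if $V$ is irreducible then the displayed decomposition into irreducible summands forces $m = 1$ and $V_1$ to be a twist of $V$, so that $Z'$ coincides with the component $Z$ produced in the previous paragraph.

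Finally, the fact that $Z$ meets no other irreducible component $Z'''$ follows formally: any $\kappa(V') \in Z \cap Z'''$ satisfies $V' = \chi \cdot V$ for some $\chi$, hence $V'$ is irreducible, and by the uniqueness just established one has $Z''' = Z$, a contradiction. The only step demanding more than formal manipulation is extracting $\dim Z' = m$ from the proof of Theorem~\ref{irrcpt} 2); everything else reduces to the observation that an irreducible 2-skeleton sheaf in the sense of Section~\ref{ss.strong} admits only the trivial direct sum decomposition.
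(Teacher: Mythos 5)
Your proof is correct and follows the route the paper intends: the corollary is stated without proof immediately after Theorem~\ref{irrcpt}, and your argument fills in exactly the intended deduction. You are also right to flag that the key numerical fact $\dim Z' = m$ (the dimension of a component equals the number of irreducible summands in its generic decomposition, once these are normalized to lie in pairwise distinct weight classes) is not in the \emph{statement} of Theorem~\ref{irrcpt}~2) but must be extracted from its proof, where $\G_m^m \to Z'$ is shown to be finite and surjective; this is genuinely necessary, since without distinctness of the weight classes the map $\G_m^m\to Z'$ need not be injective and one would only get $\dim Z' \le m$.
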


\begin{rmk}
If Question~\ref{fund.quest} had a positive answer and using a more refined analysis of
Deligne~\cite{DelFinitude} one could deduce that the moduli space $L_r(X,D)$ is smooth and any
irreducible component is of the from $\G_m^{s_1}\times \A^{s_2}$ ($s_1,s_2\ge 0$).
\end{rmk}

\begin{proof}[Proof of Theorem~\ref{thm.finite2}]
Using the Chow lemma \cite[Sec.~5.6]{EGAII} we can assume without loss of generality that $\bar X$ is projective.
  By Corollary~\ref{cor.irr}, the set of one-dimensional irreducible components of
  $L_r(X,D)\otimes \bar{\Q}$ is in bijection with the set of irreducible 2-skeleton sheaves
  on $X$
  up to twist by $\dR_1(\F_q)$. Since $L_r(X,D)$ is of finite type, there are only
  finitely many irreducible components.
\end{proof}

\begin{proof}[Proof of Theorem~\ref{mainthmirr}]

By Corollary~\ref{vir.compan}  there is a natural action of ${\rm Aut}(\Ql /\Q)$ on
$\dV_r(X,D)$ compatible via $f_V$ with the action on $\Ql[t]$ which fixes $t$. Let $N>0$
be an integer such that $\det(V)^{\otimes N}=1 $.
For $\sigma \in {\rm Aut}(\Ql /\Q)$ we then have 
\[
1 = \sigma (\det(V)^{\otimes N} ) =  \det( \sigma (V))^{\otimes N} .
\]
Then Theorem~\ref{thm.finite2}, (see also the remark following the  theorem), implies that the orbit of $V$ under ${\rm Aut}(\Ql /\Q)$ is
finite. Let $H\subset {\rm Aut}(\Ql /\Q)$ be the stabilizer group of $V$. As $[{\rm
  Aut}(\Ql /\Q) :H]< \infty$  we get that $E(V)=\Ql^{H}$
is a number field.
\end{proof}

\noindent
In order to effectively determine the field $E(V)$ for $V\in \dR_r(X)$ with $X\in
\Sm_{\F_q}$ projective   one can use the following simple consequence of a theorem of Drinfeld \cite{Drinfeld}, which itself
relies on Deligne's Theorem~\ref{mainthmirr}.

\begin{prop} \label{hyperplane}
For $X/\F_q$ a smooth projective geometrically connected scheme and $H\hookrightarrow X$ a
smooth hypersurface section with $\dim(H)>0$ consider $V\in \dR_r(X)$. Then $E(V) = E(V|_H )$.
\end{prop}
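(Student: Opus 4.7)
The inclusion $E(V|_H) \subseteq E(V)$ is immediate, since the coefficients of the local characteristic polynomials $f_{V|_H}(y)$ at closed points $y \in |H|$ form a subset of those that generate $E(V)$. So the plan is to show the reverse inclusion by showing that every $\sigma \in \mathrm{Aut}(\bar{\Q}_\ell/\Q)$ fixing $E(V|_H)$ pointwise must also fix $E(V)$ pointwise.

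Fix such a $\sigma$. Without loss of generality I may decompose $V$ into its irreducible constituents and argue for each one, so assume $V$ is irreducible. By Drinfeld's theorem on companions in higher dimensions (the Remark at the end of Section~4.2, whose proof uses Theorem~\ref{mainthmirr}), there exists a $\sigma$-companion $V_\sigma \in \dR_r(X)$ with
\[
f_{V_\sigma}(x) = \sigma\bigl(f_V(x)\bigr) \quad \text{for all } x \in |X|.
\]
Because $\sigma$ fixes $E(V|_H)$, we have $f_{V_\sigma}(y) = f_V(y)$ for every $y \in |H|$. Hence $V|_H$ and $V_\sigma|_H$ have identical characteristic polynomials at every closed point of $H$, and Proposition~\ref{bas.charpoly} applied to $H$ gives $V_\sigma|_H = V|_H$ as elements of $\dR_r(H)$.

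To conclude $V_\sigma = V$ in $\dR_r(X)$, I invoke the weak Lefschetz theorem for the étale fundamental group (SGA 2): since $X$ is smooth projective and $H$ is a smooth ample hypersurface section with $\dim H \ge 1$ (so $\dim X \ge 2$), the induced map $\pi_1(H \otimes_{\F_q} \F) \to \pi_1(X \otimes_{\F_q} \F)$ is surjective, and by the snake lemma applied to the defining extensions of $W(H)$ and $W(X)$ one obtains the surjectivity $W(H) \twoheadrightarrow W(X)$. A surjection of groups induces an injection on isomorphism classes of semi-simple representations of a fixed dimension, so the equality $V_\sigma|_H = V|_H$ in $\dR_r(H)$ forces $V_\sigma = V$ in $\dR_r(X)$. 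Consequently $\sigma(f_V(x)) = f_V(x)$ for every $x \in |X|$, so $\sigma$ fixes $E(V)$. By Galois theory this yields $E(V) \subseteq E(V|_H)$, completing the proof.

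The main obstacle is the last step: both the use of Drinfeld's higher-dimensional companion theorem and the Lefschetz-type surjectivity $W(H) \twoheadrightarrow W(X)$ are essential, and one must check that the passage from $\pi_1$ to $W$ really does allow lifting the isomorphism of semi-simple Weil sheaves from $H$ to $X$. Both points are well-established, but writing them out carefully in the Weil-sheaf setting is where attention is required.
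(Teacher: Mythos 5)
Your proposal follows essentially the same route as the paper's proof: use the Lefschetz-type surjectivity $W(H)\twoheadrightarrow W(X)$ to get injectivity of the restriction map $\dR_r(X)\to\dR_r(H)$, use Drinfeld's higher-dimensional companion theorem to produce $V_\sigma$, and conclude that $V$ and $V|_H$ have the same stabilizer in $\mathrm{Aut}(\bar\Q_\ell/\Q)$. The paper compresses this into three sentences, but the mechanism is identical.

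One small caution: your ``without loss of generality assume $V$ is irreducible'' is both unnecessary and not obviously valid. If $V=\bigoplus_i V_i$, then $f_V(x)=\prod_i f_{V_i}(x)$, so $E(V)$ is a priori only a subfield of the compositum of the $E(V_i)$, and knowing $E(V_i)=E(V_i|_H)$ for each $i$ does not immediately yield $E(V)=E(V|_H)$. In fact, since the higher-dimensional companion theorem (the Remark following Corollary~\ref{corocompan}) already applies to arbitrary, not necessarily irreducible, $V\in\dR_r(X)$, you should simply drop the reduction: the chain of equivalences $\sigma$ fixes $E(V|_H)$ $\Leftrightarrow$ $V_\sigma|_H=V|_H$ $\Leftrightarrow$ $V_\sigma=V$ $\Leftrightarrow$ $\sigma$ fixes $E(V)$ works directly for any $V$.
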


\begin{proof}
Observe that the Weil group of $H$ surjects onto the Weil group of $X$, so we get an
injection $\dR_r(X)\to \dR_r(H)$. By
\cite{Drinfeld} Corollary~\ref{corocompan} remains true for higher dimensional
smooth schemes $X/\F_q$, i.e.\ for any $\sigma\in {\rm Aut}(\bar \Q_{\ell} / \Q
)$ there exists a $\sigma$-companion $V_\sigma$ to $V$. By the above injectivity, the sheaves $V$ and
$V|_H$ have the same stabilizer $G$ in ${\rm Aut}(\Ql /\Q)$. We get $$E(V)= \Ql^G = E(V|_H).  $$
\end{proof}

\section{Applications}\label{appl}

We now explain applications of the finiteness theorem for $2$-skeleton sheaves to a conjecture from Weil II \cite[Conj.~1.2.10~(ii)]{WeilII} and to
Chow groups of $0$-cycles.

\subsection{Finiteness of relative Chow group of $0$-cycles}\label{sec.finchow}

It was shown by Colliot-Th\'el\`ene--Sansuc--Soul\'e   \cite{CTSS} and by 
Kato--Saito \cite{KS}  that over a finite field, 
the Chow group of $0$-cycles  of degree $0$ of a proper variety is
finite.

Assume now that $X\subset \bar X$ is a compactification as above and let $D\in \Div^+(\bar X)$
be an effective Cartier divisor with support in $\bar X \setminus X $. For a curve $
C\in \Cu(X)$ and an effective  divisor $E\in \Div^+(\bar C)$ with support in $\bar C \setminus C$, where $\bar C$ is the smooth compactification of $C$,   let 
\[
P_{k(C)}(E) = \{ g\in  k(C)^\times |  {\rm ord}_x (1-g)\ge {\rm mult}_x( E)+1  \text{ for }  x\in \bar C \setminus C \} 
\]
be the unit group with modulus  well known from the ideal theoretic version of global class
field theory. Set
\[
\CH_0(X,D)= Z_0(X) / \im [ \bigoplus_{C\in \Cu(X)} P_{k(C)}( \bar \phi^* D)  ].
\] 
Here $\bar \phi : \bar C \to \bar X$ is the extension of the natural morphism $\phi:C \to X$.
A similar Chow group of $0$-cycles is used in  \cite{ESV}, \cite{Russell} to define 2-skeleton Albanese
varieties. For $D=0$ and $\bar X \setminus X$ a simple normal crossing divisor it is
isomorphic to the Suslin homology group $H_0(X)$ \cite{Schm}. For $\dim(X)=1$ it is the classical ideal class
group with modulus $D+ E$, where $E$ is the reduced divisor with support $ \bar X \setminus X$. 

From Deligne's finiteness Theorem~\ref{thm.finite2} and class field theory one immediately
obtains a finiteness result which was expected to hold in higher dimensional class field theory.

\begin{thm} \label{CH}
For any $D\in \Div^+(\bar X)$ as above the kernel of the degree map from  $\CH_0(X,D)$ to $\Z$ is finite. 
\end{thm}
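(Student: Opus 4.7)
The plan is to leverage Theorem~\ref{thm.finite2} in the rank one case together with one-dimensional class field theory with modulus in order to show that the abelian group $A := \ker(\CH_0(X,D) \to \Z)$ admits only finitely many $\bar{\Q}_\ell^\times$-valued characters, and then to conclude from this that $A$ itself is finite.

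For each curve $C \in \Cu(X)$, classical one-dimensional class field theory with modulus provides a reciprocity bijection between continuous characters $\CH_0(C,\bar\phi^*D) \to \bar{\Q}_\ell^\times$ and rank one lisse $\bar{\Q}_\ell$-Weil sheaves on $C$ whose Swan conductor is bounded by $\bar\phi^*D$; the off-by-one in the definition of $P_{k(C)}(E)$ is compensated by the identity Artin conductor $=$ Swan conductor $+1$ for a ramified rank one character, so the modulus condition matches Definition~\ref{def.boundram}. These curvewise reciprocities assemble globally: since $\CH_0(X,D)$ is $Z_0(X)$ modulo the images of $P_{k(C)}(\bar\phi^*D)$ over all $C \in \Cu(X)$, a character of $\CH_0(X,D)$ is the same as a family of characters of $\CH_0(C,\bar\phi^*D)$ that agree at cycles supported on $(C \times_X C')_{\mathrm{red}}$. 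Via the per-curve reciprocities, this translates into a compatible family of rank one sheaves, i.e., an element of $\dV_1(X,D)$, yielding a bijection
\[
\Hom(\CH_0(X,D), \bar{\Q}_\ell^\times) \;\cong\; \dV_1(X,D).
\]
Twisting $V$ by $\chi \in \dR_1(\F_q) = \Hom(\Z, \bar{\Q}_\ell^\times)$ corresponds on the character side to multiplication by $\chi \circ \deg$, so two characters of $\CH_0(X,D)$ coincide on $A$ precisely when they differ by a twist, and the previous bijection restricts to $\Hom(A, \bar{\Q}_\ell^\times) \cong \dV_1(X,D)/\dR_1(\F_q)$. Applying Theorem~\ref{thm.finite2} with $r=1$ (in rank one every sheaf is irreducible), the right hand side is finite.

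Finally I would observe that any abelian group $A$ with $\Hom(A, \bar{\Q}_\ell^\times)$ finite must itself be finite. Indeed, an element $a \in A$ of infinite order would induce a surjection $\Hom(A, \bar{\Q}_\ell^\times) \twoheadrightarrow \Hom(\Z a, \bar{\Q}_\ell^\times) = \bar{\Q}_\ell^\times$ with uncountable target, so $A$ is torsion; and writing $A$ as the filtered union of its finite subgroups $H$, the identification $\Hom(A, \bar{\Q}_\ell^\times) = \varprojlim_H \Hom(H, \bar{\Q}_\ell^\times)$ exhibits the left hand side as a projective limit of finite self-dual groups, whose finiteness forces the system to stabilize at some $H = A$. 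The main obstacle is the careful bookkeeping in the class field theoretic identification, namely matching the modulus conventions of $P_{k(C)}(\bar\phi^*D)$ with the Swan conductor bound $\Sw(V) \le \bar\phi^*D$, and verifying that the 2-skeleton compatibility of Definition~\ref{def.boundram} is precisely the descent condition for a family of per-curve characters to define a character of $\CH_0(X,D)$.
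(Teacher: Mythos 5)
Your proof is correct and fills in exactly the argument the paper leaves implicit — the paper only remarks that the statement follows "from Deligne's finiteness Theorem~\ref{thm.finite2} and class field theory," and your chain (per-curve reciprocity with modulus matching the Swan bound, gluing to identify $\Hom(\CH_0(X,D),\bar\Q_\ell^\times)$ with $\dV_1(X,D)$, the twist/$\deg$ dictionary, the $r=1$ case of Theorem~\ref{thm.finite2}, and divisibility of $\bar\Q_\ell^\times$ to pass from finiteness of the character group to finiteness of $A$) is that argument made precise. The only small point worth stating explicitly when writing it up is the verification that the 2-skeleton compatibility of Definition~\ref{def.boundram} forces the per-curve characters to come from a single well-defined function on $|X|$, which you flagged as the "main obstacle" — it does work, using curves that are closed immersions near a given closed point to normalize the Frobenius values.
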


\smallskip

\subsection{Coefficients of characteristic polynomial of the Frobenii at closed points}\label{sec.coeff}

In  \cite[Conjecture~1.2.10]{WeilII} Deligne conjectured that sheaves $V\in \dR_r(X)$ with
certain obviously necessary properties should behave as if they all came from geometry,
i.e.\  as if they were $\ell$-adic realizations of pure motives over $X$. In particular they should not only be `defined over'
$\Ql$, but over $\bar \Q$. In this section we explain how this latter conjecture  of
Deligne (for the
precise formulation see Corollary~\ref{delconjweil} below), follows from
Theorem~\ref{thm.finite2}.

In fact Corollary~\ref{delconjweil} is the main result of Deligne's article \cite{DelNumberField}.
His  proof uses Bombieri's upper estimates for the $\ell$-adic Euler characteristic of an affine variety defined over
a finite field, (and Katz' improvement for the Betti numbers)  in terms of the embedding dimension, the number and the degree of the defining equations,  which rests, aside of Weil II, 
on Dwork's $p$-adic methods.
  In \cite{EK} it was
observed that one could replace the use of $p$-adic cohomology theory by some
more elementary ramification theory. After this  Deligne extended his methods in  \cite{DelFinitude}
to obtain the Finiteness
Theorem \ref{thm.finite2}.

For $V\in \dV_r(X)$ and $x\in | X|$ one defines the characteristic polynomial  of Frobenius
$f_V(x)\in \Ql [t]$
at the point $x$, see Section~\ref{sec.basics}. Let $E(V)$ be the subfield of $\Ql$
generated by all coefficients of all the polynomials $f_V(x)$ where $x\in |X|$ runs through the closed points.

\begin{thm}\label{mainthmirr}
Let $D\in \Div^+(\bar
X)$ be an effective Cartier divisor  with support in $\bar X \setminus X$.
For $V\in \dV_r(X,D)$ irreducible with $\det (V)$ of finite order,  the field
$E(V)$ is a number field.
\end{thm}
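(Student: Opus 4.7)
The plan is to realize $E(V)$ as contained in the fixed field of a finite-index subgroup of $\mathrm{Aut}(\Ql/\Q)$ acting on $\dV_r(X,D)$, with the finiteness of the index supplied by Theorem~\ref{thm.finite2}.

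First, I would invoke Corollary~\ref{vir.compan}, which tells us that $\mathrm{Aut}(\Ql/\Q)$ stabilizes $\kappa(\dV_r(X,D)) \subset \sL_r(X)(\Ql)$. Combined with the injectivity of $\kappa$ from Proposition~\ref{bas.charpoly}, this transports to an action of $\mathrm{Aut}(\Ql/\Q)$ on $\dV_r(X,D)$ itself, which by construction is compatible with the characteristic polynomial map $V \mapsto f_V$, where $\sigma$ acts on $\Ql[t]$ coefficient-wise and fixes $t$. Writing $H \subset \mathrm{Aut}(\Ql/\Q)$ for the stabilizer of $V$, an element $\sigma$ lies in $H$ if and only if it fixes every coefficient of every $f_V(x)$, i.e.\ iff it fixes $E(V)$ pointwise; in particular $E(V) \subset \Ql^H$.

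Next, I would bound the orbit of $V$ using the hypothesis on $\det(V)$. Choosing $N \geq 1$ with $\det(V)^{\otimes N} = 1$, the compatibility of the $\mathrm{Aut}(\Ql/\Q)$-action with taking determinants gives $\det(\sigma(V))^{\otimes N} = \sigma(1) = 1$ for every $\sigma$, so the whole orbit of $V$ lies in the subset of $\dV_r(X,D)$ cut out by this condition. By the remark immediately following Theorem~\ref{thm.finite2}, that subset is finite; hence the orbit of $V$ under $\mathrm{Aut}(\Ql/\Q)$ is finite and $H$ has finite index in $\mathrm{Aut}(\Ql/\Q)$.

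Finally, I would conclude by an orbit-counting argument applied to scalars rather than sheaves. Each coefficient $\alpha$ of each $f_V(x)$ has $\mathrm{Aut}(\Ql/\Q)$-orbit contained in the union of coefficients of $f_{\sigma V}$ as $\sigma$ ranges over the (finite) orbit of $V$, so $\alpha$ is algebraic over $\Q$ of degree bounded uniformly in $x$ by the index $[\mathrm{Aut}(\Ql/\Q):H]$. A subfield of $\Ql$ generated by algebraic elements of uniformly bounded degree is a finite extension of $\Q$ (apply the primitive element theorem to any finitely generated subextension, noting that degrees cannot keep increasing), so $E(V)$ is a number field. The one nontrivial input is Theorem~\ref{thm.finite2}; everything else is formal Galois theory, so there is no real obstacle beyond that finiteness result.
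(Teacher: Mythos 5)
Your overall route is the same as the paper's: use Corollary~\ref{vir.compan} to get an action of $\mathrm{Aut}(\Ql/\Q)$ on $\dV_r(X,D)$, use the hypothesis on $\det(V)$ and Theorem~\ref{thm.finite2} to bound the orbit of $V$, and deduce that the stabilizer $H$ has finite index. That part is fine and matches the paper.

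However, the last step is wrong as written. The principle you invoke, \emph{a subfield of $\Ql$ generated by algebraic elements of uniformly bounded degree is a finite extension of $\Q$}, is false: the field $\Q(\sqrt{2},\sqrt{3},\sqrt{5},\dots)$ is generated by elements of degree $2$ over $\Q$ but is an infinite extension. The parenthetical "degrees cannot keep increasing" is exactly the step that fails, because the primitive element of a finitely generated subextension need not have the same degree bound as the generators ($\sqrt{2}+\sqrt{3}$ already has degree $4$). The orbit bound you proved applies only to the coefficients $\alpha$ of the $f_V(x)$, not to primitive elements of subfields they generate, so it does not yield the conclusion.

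The repair is close at hand, since you already noted $E(V)\subset\Ql^H$. The correct statement is that \emph{every} element $\gamma$ of $\Ql^H$, not just the generators of $E(V)$, has $\mathrm{Aut}(\Ql/\Q)$-orbit of size at most $[\mathrm{Aut}(\Ql/\Q):H]$: indeed $H$ fixes $\gamma$ by definition, so $H\subset\mathrm{Stab}(\gamma)$, and the orbit has size $[\mathrm{Aut}(\Ql/\Q):\mathrm{Stab}(\gamma)]\le[\mathrm{Aut}(\Ql/\Q):H]$. Since $\Ql^{\mathrm{Aut}(\Ql/\Q)}=\Q$, the monic polynomial $\prod_{\gamma'\in\mathrm{Orb}(\gamma)}(T-\gamma')$ has rational coefficients, so $\gamma$ is algebraic over $\Q$ of degree $\le[\mathrm{Aut}(\Ql/\Q):H]$. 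Thus every element of $\Ql^H$ has degree bounded by $[\mathrm{Aut}(\Ql/\Q):H]$; now the primitive element theorem applied to finitely generated subextensions (whose primitive elements also lie in $\Ql^H$ and hence obey the same degree bound) shows $[\Ql^H:\Q]\le[\mathrm{Aut}(\Ql/\Q):H]<\infty$, and $E(V)\subset\Ql^H$ is then a number field.
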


In Section~\ref{sec.irrcomp} we deduce Theorem~\ref{mainthmirr} from Theorem~\ref{thm.finite2}.
By associating to $V\in \dR_r(X)$ its 2-skeleton sheaf in $\dV_r(X)$, one
 finally obtains Deligne's conjecture  \cite[Conj.~1. 2.10(ii) ]{DelNumberField} from Weil II. 

\begin{cor}\label{delconjweil}
For $V\in \dR_r(X)$ irreducible with $\det (V)$ of finite order the field
$E(V)$ is a number field.
\end{cor}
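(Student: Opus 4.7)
The plan is to reduce Corollary~\ref{delconjweil} directly to Theorem~\ref{mainthmirr} by passing from the lisse sheaf $V$ to its associated $2$-skeleton sheaf. Since Theorem~\ref{mainthmirr} is stated for elements of $\dV_r(X,D)$, the task is essentially one of translation: one must verify that all the hypotheses transfer under the canonical map $\dR_r(X) \to \dV_r(X)$ of \eqref{fun.inj}, and that the field $E(-)$ is preserved.

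First, I would fix a normal compactification $X \subset \bar X$ such that $\bar X \setminus X$ is the support of an effective Cartier divisor, which exists via Nagata's compactification theorem followed, if necessary, by Chow's lemma and a blow-up along the Weil boundary to achieve the Cartier property. Applying Proposition~\ref{prop.rambound}, I obtain an effective Cartier divisor $D \in \Div^+(\bar X)$ with support in $\bar X \setminus X$ and $V \in \dR_r(X,D)$.

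Next, let $V^{\mathrm{sk}} \in \dV_r(X)$ denote the image of $V$ under the canonical map $\dR_r(X) \to \dV_r(X)$, i.e.\ the family of restrictions $(V|_C)_{C \in \Cu(X)}$. By Definition~\ref{def.boundram} one immediately gets $V^{\mathrm{sk}} \in \dV_r(X,D)$. Since $V$ is irreducible, Proposition~\ref{propcurveex} from Appendix~\ref{app.B} guarantees that $V^{\mathrm{sk}}$ is also irreducible. Moreover, as formation of the determinant commutes with restriction to curves, $\det(V^{\mathrm{sk}})$ inherits the finite order of $\det(V)$.

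Finally, by the very definition of $f_V(x)$ for $2$-skeleton sheaves given in Section~\ref{sec.basics}, we have $f_V(x) = f_{V^{\mathrm{sk}}}(x)$ for every $x \in |X|$, and consequently $E(V) = E(V^{\mathrm{sk}})$. Applying Theorem~\ref{mainthmirr} to $V^{\mathrm{sk}} \in \dV_r(X,D)$ concludes that $E(V)$ is a number field. The only mild subtlety is the construction of $\bar X$ with the Cartier boundary condition, but this is a standard compactification argument; all the real content has been placed in the $2$-skeleton version Theorem~\ref{mainthmirr}, which in turn rests on the finiteness Theorem~\ref{thm.finite2}.
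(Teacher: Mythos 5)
Your proposal is correct and follows the same route as the paper's (very terse) proof: invoke Proposition~\ref{prop.rambound} to produce a bounding divisor $D$, pass to the induced $2$-skeleton sheaf, and apply Theorem~\ref{mainthmirr}. You spell out the routine verifications — irreducibility transfers by Proposition~\ref{propcurveex}, finite order of the determinant transfers, and $E(V)$ is computed from the same local polynomials — which the paper leaves implicit, but the substance is identical.
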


In fact by
 Proposition~\ref{prop.rambound} there is a divisor $D$ such that $V\in \dR_r(X,D)$. Then
 apply Theorem~\ref{mainthmirr} to the induced 2-skeleton sheaf in $\dV_r(X,D)$.

\section{ Deligne's conjecture on the number of irreducible lisse sheaves of rank $r$ over a smooth curve with prescribed local monodromy at infinity} \label{conjecture}
Let $C$ be a smooth quasi-projective geometrically irreducible curve over $\F_q$, and $C\hookrightarrow \bar C$ be a smooth compactification. 
One fixes an algebraic closure $\F\supset \F_q$  of $\F_q$. 
For each point $ s\in (\bar{C}\setminus C)(\F)$, one fixes a  $\bar \Q_\ell$-representation $V_{ s}$ of the inertia 
$$I( s)={\rm Gal}(K^{\rm sep}_{ s}/ K_{ s} )$$
 where $K_s$ is the completion of the function field $K=k(C)$ at $s$.  We write 
  $$I(\bar s)=P\rtimes \prod_{\ell'\neq p} \Z_\ell(1),$$ 
  where $P$ is the wild inertia, a pro-$p$-group. A  generator $\xi_{\ell'}$ of  $\Z_{\ell'}(1), \ \ell'\neq p$, acts on $V_{ s}$ for all $ s\in  (\bar{C}\setminus C)(\F)$ .
  Since the open immersion $j: C\hookrightarrow \bar{ C}$ is defined over $\F_q$, if $ s\in  (\bar{C}\setminus C)(\F)$ is defined over $\F_{q^n}$,  for any conjugate point $ s'  \in  (\bar{C}\setminus C)(\F)$, the group $I( s')$ is conjugate to $I( s)$ by ${\rm Gal}(\F/\F_{q})$.
  One requires the following condition to be fulfilled.
\begin{itemize}
\item[i)]
If $ s' \in (\bar{C}\setminus C)( \F)$ is ${\rm Gal}(
\F/\F_q)$-conjugate to $ s$, the conjugation which identifies  $I( s')$
and  $I( s)$ identifies  $V_{ s'}$ and $V_{ s}$. 
\end{itemize}
Let $V$ be an  irreducible lisse $\bar \Q_\ell $   sheaf of rank $r$ on $ C\otimes_{\F_q} \F$ such that   the set of  isomorphism classes of 
   restrictions  $\{V\otimes K_{ s} \}$ to $\Spec K_{s}$ is the set $\{V_{s}\}$ defined above with the condition i). 
      Then if  for a natural number $n\ge 1$, 
   $V$ is $F^n$ invariant, $V$ descends to a Weil sheaf on $C\otimes_{\F_q} \F_{q^n}$.  By
   Weil II, (1.3.3), ${\rm det}(V)$ is torsion. Thus by the dimension one case of Theorem~\ref{thm.finite1} the cardinality of the set of such $F^n$-invariant sheaves $V$ is finite.     
 \medskip
 
If  such a $V$ exists, then the set $\{V_{\bar s}\}$  satisfies automatically 
 \begin{itemize}
\item[ii)] For any $\ell' \neq p$, $\xi_{\ell'}$ acts trivially on $ \otimes_{ s\in (\bar{C}\setminus C)(\F)} {\rm det}(V_{ s})$.
\end{itemize}
 Indeed,  as ${\rm det}(V)$ is torsion,  a $p$ power ${\rm det}(V)^{p^N}$ has torsion $t$ prime to $p$, thus defines a class in $H^1(C \otimes_{\F_q} \F, \mu_t)$. The exactness of the localization sequence $H^1(C \otimes_{\F_{q}} \F, \mu_t) \xrightarrow{{\rm res}} \oplus_{ s\in (\bar C  \setminus  C)(\F)} \Z/t \xrightarrow{{\rm sum}} H^2(\bar{C}\otimes_{\F_q} \F, \mu_t)=\Z/t$ implies that the sum of the residues is $0$. This shows ii). 
 
\medskip

Furthermore, if  such a $V$ exists, then the set $\{V_{\bar s}\}$  satisfies automatically 
\begin{itemize}
 \item[iii)] The action of $\xi_{\ell'}$ on  $V_{ s}$ is quasi-unipotent for all $\ell'\neq p$ and all $ s\in  (\bar{C}\setminus C)(\F)$.
 \end{itemize}

 Indeed, this is Grothendieck's theorem,  see \cite[Appendix]{ST}. 
 \medskip 
 
 Given a set  $\{ V_{ s}\}$ for  all $ s\in (\bar{C}\setminus C)(\F)$, satisfying the conditions i), ii), iii),    Conjecture~\ref{conj} predicts a qualitative shape for the  cardinality of the $F^n$ invariants of the set $M$ of irreducible lisse $\bar \Q_\ell $   sheaves  on $ C\otimes_{\F_q} \F$ of rank $r$ 
 with $V\otimes K_{ s}$  isomorphic to $V_{ s}$.  
 
\medskip

\medskip

If $V$ is an element  of $M$, then $H^0(\bar C\otimes_{\F_q } \F, j_*\sE nd(V))=\bar
\Q_\ell$, spanned by the identity. Indeed, a global section is an endomorphism
$V\xrightarrow{f} V$ on $ C\otimes_{\F_q} \F$.  $f$ is  defined by an
endomorphism of the $\bar \Q_\ell$ vector space $V_{  a}$ which commutes with
the action of $\pi_1(\bar C,  a)$, where $ a\in C(\F)$ is a given closed
geometric point. Since this action is irreducible, the endomorphism is a
homothety.  We write $\sE nd(V)=\sE nd(V)^0\oplus \bar \Q_\ell$, where $\sE
nd(V)^0$ is the trace-free part,  thus 
$j_*\sE nd(V)=j_*\sE nd(V)^0\oplus \bar \Q_\ell$. Thus $H^0(\bar{ C}\otimes_{ \F_q} \F,
j_*\sE nd^0(V)) =0$. The cup-product 
\ga{}{   j_* \sE nd(V)) \times j_*\sE nd(V)\to j_*\bar \Q_\ell=\bar \Q_\ell  \notag}
obtained by composing endomorphisms and then taking the trace induces 
the perfect duality
\ga{dual}{    H^i(  \bar{C}\otimes_{ \F_q } \F, j_*\sE nd^0(V))\times H^{2-i}(   \bar{C}\otimes_{\F_q}\F, j_*\sE nd^0(V))\to H^2( \bar{C}\otimes_{\F_q}\F, \bar \Q_\ell) . } 
For $i=1$,  the bilinear form \eqref{dual} is symplectic.  We conclude that $H^2( \bar{C}\otimes_{\F_q} \F, j_*\sE nd^0(V))=0$ and that 
$H^1(  \bar{C}\otimes_{\F_q}\F, j_*\sE nd^0(V))$ is even dimensional. But ${\rm dim}  \ H^1  ( \bar{C}\otimes_{\F_q}\F, \bar{\Q}_\ell) =2g    $ thus 
$H^1(  \bar{C}\otimes_{\F_q}\F, j_*\sE nd(V))$ is even dimensional as well. 
We define 
\ga{}{2d={\rm dim}  \ H^1(  \bar{C}\otimes_{\F_q}\F, j_*\sE nd(V)).\notag}
\begin{conj}\label{conj}(Deligne's conjecture) \ \ 
\begin{itemize}
\item[i)] There are finitely many Weil numbers $a_i, b_j$ of weight between $ 0$  and $2d$  such that
\ga{}{    N(n)=\sum_i a_i^n-\sum_j b_j^n       \notag}
\item[ii)] If $M \neq \emptyset$, there is precisely one of the numbers $a_i, b_j$ of weight $2d$ and moreover, it is one of the $ a_i$ and is equal to $q^d$.
\end{itemize}

\end{conj}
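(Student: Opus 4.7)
The plan is to interpret $N(n)$ as a count of $\F_{q^n}$-points on a naturally defined arithmetic moduli and then to invoke the Grothendieck--Lefschetz trace formula combined with Deligne's purity. Fix local data $\{V_s\}$ satisfying (i)--(iii). One must construct a Deligne--Mumford stack (or scheme) $Y$ of finite type over $\F_q$ whose $\F$-points are in natural bijection with $M$, with geometric Frobenius acting so that $\#Y(\F_{q^n}) = N(n)$. Assuming such $Y$ exists, the identity
$$ N(n) = \sum_i a_i^n - \sum_j b_j^n $$
becomes the Grothendieck--Lefschetz trace formula
$$ N(n) = \sum_k (-1)^k \Tr(F^n \mid H^k_c(Y \otimes_{\F_q} \F, \Ql)), $$
with the $a_i$ being Frobenius eigenvalues on even-degree compactly supported cohomology and the $b_j$ on odd-degree cohomology; the finite-type property of $Y$ forces the sum to be finite.

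The weight bound and the identification of the top Weil number follow from deformation theory and purity. At any irreducible $V \in M$, the tangent space to the moduli is $H^1(\bar C \otimes_{\F_q} \F,\, j_*\sE nd^0(V))$, of even dimension $2d$, and the obstruction $H^2$ vanishes by the duality \eqref{dual} together with $H^0(j_*\sE nd^0(V))=0$. Hence the moduli is formally smooth, and after rigidifying by the prescribed local data to remove scalar $\G_m$-automorphisms, $Y$ should be smooth of pure dimension $d$. Deligne's purity theorem then forces any Frobenius eigenvalue on $H^k_c(Y_{\bar \F}, \Ql)$ to have weight $\le k \le 2d$, giving (i). If $Y$ is moreover geometrically irreducible, $H^{2d}_c(Y_{\bar \F}, \Ql) = \Ql(-d)$ contributes the unique Frobenius eigenvalue $q^d$ of weight $2d$, and it sits in even degree, which is precisely the conclusion of (ii).

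An alternative, possibly more tractable, route goes through the Langlands correspondence of Drinfeld--Lafforgue: it translates the count $N(n)$ into the number of cuspidal automorphic representations of $\GL_r(\mathbb{A}_{k(C)})$ with prescribed local Langlands parameters at the places in $\bar C \setminus C$. The Arthur--Selberg trace formula then expresses this count as a sum of orbital integrals, whose identity contribution (a Tamagawa volume) produces a term scaling like $q^{dn}$, while non-identity conjugacy classes contribute Weil numbers of strictly smaller weight.

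The hardest step will be the actual construction of $Y$. The moduli $L_r(C,D)$ of Section~\ref{sec.moduli} parametrizes 2-skeleton sheaves only up to semisimplification, is defined over $\Q$ rather than over $\F_q$, and has no built-in mechanism for prescribing local monodromy at individual boundary points. A genuinely new moduli construction is therefore required: one living over $\F_q$, remembering the isomorphism class of each local representation $V_s$, parametrizing only irreducible sheaves, and admitting control on its geometric connected components. Establishing geometric irreducibility -- needed for the uniqueness of the weight-$2d$ Weil number in (ii) -- is where I expect the main difficulty to concentrate, and it seems to require genuinely new input beyond the tools developed in the paper.
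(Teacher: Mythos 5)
This statement is Conjecture~\ref{conj} --- the paper does not prove it. The paper only supplies motivation: the cohomological computation showing $H^0(\bar C\otimes_{\F_q}\F, j_*\sE nd^0(V))=0$ and $H^2=0$ by duality, the definition of $2d$, one worked example with $M=\emptyset$, and two computed examples from \cite{DF}. There is therefore no paper proof to compare your attempt against, and a complete proof was not to be expected here.

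That said, your outline is a sensible and self-aware strategy sketch rather than a proof, and you correctly flag the missing ingredients. In particular, you are right that the moduli $L_r(X,D)$ of Section~\ref{sec.moduli} is useless for this purpose: it is defined over $\Q$ (not $\F_q$), parametrizes 2-skeleton sheaves only up to semisimplification, cannot prescribe the local monodromy at individual points of $\bar C \setminus C$, and carries no Frobenius action whose fixed points one could count. Building a finite-type arithmetic moduli $Y/\F_q$ with $\#Y(\F_{q^n})=N(n)$, and then controlling its geometric components, is the genuinely open content, and your alternative route via the trace formula for $\GL_r(\mathbb A_{k(C)})$ with prescribed local Langlands parameters at infinity is in fact close in spirit to how counts of this kind are attacked in the automorphic literature.

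One concrete discrepancy in your sketch: you take the tangent space at $V$ to be $H^1(\bar C\otimes_{\F_q}\F, j_*\sE nd^0(V))$ and assert it has dimension $2d$, but the paper defines
\[
2d = \dim H^1\bigl(\bar C\otimes_{\F_q}\F,\, j_*\sE nd(V)\bigr),
\]
which exceeds the trace-free version by $2g$, since $H^1(\bar C\otimes_{\F_q}\F,\bar\Q_\ell)$ has dimension $2g$. Which of $\sE nd$ or $\sE nd^0$ is the correct deformation complex depends on whether one is fixing the determinant (or rigidifying against rank-one twists), so your purported dimension $d$ of $Y$ does not a priori match the exponent $d$ in the conjectured top Weil number $q^d$. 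This would have to be reconciled before the purity argument could yield the asserted weight bound and top-term identification, and it signals that a naive quotient-by-twists moduli is probably not the object whose point count equals $N(n)$.
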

An example where $M=\emptyset$ is given by $\bar{C}=\P^1$, $C$ is the complement of $3$ rational points $\{0,1,\infty\}$, the rank $r$ is $2$ and the $V_{s}$ are unipotent, so in particular, the Swan conductor at the $3$ points is $0$. Indeed, fixing $\ell'$, the  inertia groups $I( s)$ at the $3$ points, which depend on the choice of a base point,  can be chosen so the product over the $3$ points of the  $\xi_{\ell'}$ is equal to $1$. Thus  the set $\{V_{ s}, s=0,1,\infty\}$ is defined by  $3$ unipotent matrices $A_0,A_1,A_{\infty}$ in ${\rm GL}(2, \bar{\Q}_\ell)$ such that $A_0\cdot A_1\cdot A_{\infty}=1$.  Since $A_0\cdot A_1$ is then unipotent, $A_0$ and $A_1$, and thus $A_\infty$,  lie in the same Borel subgroup of $GL(2, \bar \Q_\ell)$. Thus 
the $3$ matrices have one common eigenvector. 
Since the tame fundamental group  is spanned by the images of $I( 0), I( 1), I(\infty)$,  a $\bar \Q_\ell$-sheaf of  rank $2$ with $V\otimes K_{s}$  isomorphic to $V_{ s}$ is not irreducible. Thus $M=\emptyset$.

\medskip

Two further examples are computed in \cite{DF}.  For the first case \cite[section~7]{DF},  $C=\P^1\setminus D$ where $D$ is a reduced degree $4$ divisor, with unipotent $V_{\bar s}$. The answer is $N(n)=q^n$. For the second case, 
 $C=\P^1\setminus D$ where $D$ is a reduced non-irreducible degree $3$ divisor with unipotent $V_{\bar s}$ with only one Jordan block (a condition which could be forced by the irreducibility condition for $V$). Then  $N(n)=q^n$ as well.

\appendix
\section{ } \label{app.A}

\noindent 
In this appendix we gather a few facts  on how to recognize through their
closed points affine schemes of finite type as subschemes of affine schemes not
necessarily of finite type.

\begin{prop} \label{HNS}
Let  $k$ be an algebraically closed  field, let $Y$ be an affine
$k$-scheme. Then 
the map $$Z\mapsto Z(k)$$ 
embeds
 the set of   reduced closed subschemes $Z\hookrightarrow Y$ of finite type into  the 
power set $\sP(Y(k))$.

\end{prop}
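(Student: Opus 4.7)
The plan is to work ring-theoretically. Write $Y = \Spec A$ for an (arbitrary) $k$-algebra $A$. A reduced closed subscheme $Z \hookrightarrow Y$ of finite type over $k$ corresponds bijectively to a radical ideal $I_Z \subseteq A$ such that $B_Z := A/I_Z$ is a finitely generated $k$-algebra, and under this correspondence
\[
Z(k) = \{\, \phi : A \to k \mid \phi(I_Z) = 0 \,\}.
\]
Injectivity of $Z \mapsto Z(k)$ will follow as soon as I prove the reconstruction formula
\[
I_Z = \bigl\{\, f \in A \;\bigm|\; \phi(f) = 0 \text{ for every } \phi \in Z(k) \,\bigr\},
\]
since then $Z_1(k) = Z_2(k)$ forces $I_{Z_1} = I_{Z_2}$ and hence $Z_1 = Z_2$ as schemes.

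The inclusion $\subseteq$ in the display is tautological, so the content lies in the reverse one. Given $f \in A$ that is killed by every $\phi \in Z(k)$, I would pass to the class $\bar f := f \bmod I_Z \in B_Z$ and argue that $\bar f$ lies in every maximal ideal of $B_Z$ whose residue field is $k$. Here the classical Hilbert Nullstellensatz enters: because $k$ is algebraically closed and $B_Z$ is a finitely generated $k$-algebra, every maximal ideal of $B_Z$ has residue field exactly $k$, so the $k$-rational maximal ideals exhaust $\Max(B_Z)$. Because $B_Z$ is moreover a Jacobson ring, the intersection of all its maximal ideals is the nilradical, and this nilradical vanishes since $Z$ is reduced. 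Hence $\bar f = 0$, i.e.\ $f \in I_Z$, which establishes the formula.

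The step to watch is the interplay between the ambient ring and the quotient: $A$ itself need not be finitely generated, so the Nullstellensatz and Jacobson property are unavailable on $A$ directly. The finite-type hypothesis on $Z$ is used in exactly the right place, namely to transfer the entire argument to $B_Z = A/I_Z$, where both tools apply. Once the reconstruction formula is in hand, the injectivity statement in the proposition is an immediate consequence, so the only real work is setting up the Nullstellensatz argument on the finite type quotient described above.
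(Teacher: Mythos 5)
Your proof is correct, and it takes a genuinely different route from the paper's. You quotient to the finite type algebra $B_Z = A/I_Z$ and apply the Nullstellensatz and Jacobson property directly there, recovering $I_Z$ from $Z(k)$ via the reconstruction formula $I_Z = \{f \in A : \phi(f) = 0 \text{ for all } \phi \in Z(k)\}$. The paper instead goes in the ambient direction: it writes the (possibly huge) coordinate ring $B = \Gamma(Y, \mathcal{O}_Y)$ as a filtered union of finitely generated subalgebras $B_\alpha$, passes to a cofinal subsystem on which the composites $B_\alpha \to B/I_i$ are surjective, applies the classical Nullstellensatz on each finite type quotient $Y_\alpha = \Spec B_\alpha$ to conclude $I_1 \cap B_\alpha = I_2 \cap B_\alpha$, and then takes the union over $\alpha$. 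The two arguments lean on the same core facts — that a finitely generated algebra over an algebraically closed field is Jacobson with all residue fields equal to $k$ — but you reach them by quotienting rather than by truncating $Y$, which spares you the cofinal-system bookkeeping and makes the proof a bit more self-contained. One small point worth making explicit in your write-up: the bijection between $k$-algebra homomorphisms $B_Z \to k$ and maximal ideals of $B_Z$ (needed to pass from "killed by every $\phi \in Z(k)$" to "lies in every maximal ideal") uses that two homomorphisms with the same kernel agree, which holds because a $k$-algebra endomorphism of $k$ is the identity.
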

\begin{proof}
Choose a filtered direct system  $ B_\alpha \subset B=k(Y)$ of affine $k$-algbras (of finite type), such that
$B= \varinjlim_\alpha B_\alpha $. Set $Y_\alpha=\Spec B_\alpha$. Consider two closed subschemes
\ga{eq.twoclsub}{
Z_1 = \Spec B/I_1  \hookrightarrow Y, \quad Z_2 = \Spec B/I_2  \hookrightarrow Y}
  of finite type over $k$ such that $Z_1(k)=Z_2(k) \subset Y(k)$. After replacing the direct system $\alpha$ by a cofinal
subsystem we can assume that $B_\alpha \to B/I_1$ and $B_\alpha \to B/I_2$ are
surjective. Hilbert's Nullestellensatz for the closed subschemes $Z_1 \hookrightarrow
Y_\alpha$ and $Z_2 \to Y_\alpha$ implies $I_1 \cap B_\alpha = I_2 \cap B_\alpha$. So $I_1
=I_2$ and the closed subschemes \eqref{eq.twoclsub} agree.
\end{proof}

\begin{prop} \label{descent}
 Let $k$ be a characteristic $0$ field, let $K\supset k$ be an algebraically
closed field extension. Let $ Y$ be an affine scheme over $k$, and
$Z\hookrightarrow Y\otimes_k K$ be a closed embedding of an affine scheme of a
finite type. If the subset  $Z(K)$ of $Y(K)$ is invariant under the
automorphism group of $K$ over $k$, then there is a reduced closed
subscheme  $Z_0\hookrightarrow Y$ of finite type over $k$ such that
$$(Z\hookrightarrow Y\otimes_k K)= (Z_0\hookrightarrow Y)\otimes_k K.$$

\end{prop}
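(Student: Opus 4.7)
The plan is to exhibit $Z_0$ by descending the ideal of $Z$. Write $B = \Gamma(Y, \sO_Y)$ and let $I \subset B \otimes_k K$ denote the radical ideal defining $Z$; note that the conclusion implicitly forces $Z$ to be reduced, since $Z_0 \otimes_k K$ is reduced in characteristic zero. Let $G = \mathrm{Aut}(K/k)$ act on $B \otimes_k K$ through the second factor. The first step is to show $\sigma(I) = I$ for all $\sigma \in G$: the ideal $\sigma(I)$ cuts out a reduced closed subscheme $\sigma \cdot Z \subset Y \otimes_k K$ of finite type over $K$, and an unwinding of $K$-points (viewing $Y(K) = \Hom_k(B, K)$) identifies $(\sigma \cdot Z)(K) \subset Y(K)$ with the translate $\sigma \cdot Z(K)$; $G$-invariance of $Z(K)$ then gives $(\sigma \cdot Z)(K) = Z(K)$. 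Proposition~\ref{HNS} applied to the affine $K$-scheme $Y \otimes_k K$ yields $\sigma \cdot Z = Z$, whence $\sigma(I) = I$.

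Next I would descend $I$ to $I_0 := I \cap B \subset B$ by proving $I = I_0 \otimes_k K$ inside $B \otimes_k K$. A preliminary point is that $K^G = k$: any $\alpha \in K$ algebraic over $k$ but not in $k$ admits a distinct $k$-conjugate in $K$ (since $K$ is algebraically closed of characteristic zero), and an automorphism of the algebraic closure of $k$ in $K$ realizing this conjugation extends to an automorphism of $K$; any transcendental $\alpha \in K$ over $k$ can be moved by permuting a transcendence basis over $\bar k \cap K$ and extending to $K$. Granted $K^G = k$, the descent is verified locally: for any finite-dimensional $k$-subspace $V \subset B$, the intersection $W := I \cap (V \otimes_k K)$ is a finite-dimensional $G$-invariant $K$-subspace of $V \otimes_k K \cong K^n$, which via a chosen $k$-basis of $V$ admits a unique basis in reduced row-echelon form. $G$-invariance together with uniqueness of RREF forces this basis to have entries in $K^G = k$, giving $W = (W \cap V) \otimes_k K$. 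Since every element of $I$ lies in some such $V \otimes_k K$, this yields $I = I_0 \otimes_k K$.

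It remains to check that $Z_0 := \Spec(B/I_0) \subset Y$ is reduced and of finite type over $k$. Radicality of $I$ passes to $I_0 = I \cap B$, so $Z_0$ is reduced. The identity $I_0 \otimes_k K = I$ gives $(B/I_0) \otimes_k K \cong (B \otimes_k K)/I$, which is a finitely generated $K$-algebra; picking $K$-algebra generators and expressing each as a $K$-linear combination of elements of $B/I_0 \otimes 1$, and letting $R' \subset B/I_0$ be the $k$-subalgebra generated by those finitely many elements, one gets $R' \otimes_k K = (B/I_0) \otimes_k K$. Faithful flatness of $k \to K$ then forces $R' = B/I_0$, so $B/I_0$ is of finite type over $k$. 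Finally $Z_0 \otimes_k K \cong \Spec((B \otimes_k K)/I) = Z$, as required.

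The main obstacle is the subspace descent step: $K/k$ is neither Galois nor algebraic in general, so there is no finite group to average over and no off-the-shelf Galois cohomology machinery directly available. The reduced row-echelon form trick sidesteps this, using only the identification $K^G = k$, which is exactly what the hypothesis that $K$ is algebraically closed of characteristic zero provides; removing either hypothesis would break the descent.
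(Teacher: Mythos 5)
Your proof follows the same route as the paper's: establish $G$-stability of the defining ideal $I$ via Proposition~\ref{HNS}, then descend to $I_0 = I^G = I \cap B$ and set $Z_0 = \Spec B/I_0$. Where the paper simply cites Bourbaki~[Sec.~V.10.4] for the identity $I_0 \otimes_k K = I$, you supply a correct self-contained argument: first $K^G = k$ (using that $K$ is algebraically closed and $k$ is perfect), then descent of the $G$-stable $K$-subspace $I \cap (V\otimes_k K)$ for each finite-dimensional $V\subset B$ via uniqueness of reduced row-echelon form; this is exactly the content of the Bourbaki reference, so the two arguments are the same in substance. One small slip in the $K^G=k$ step: ``permuting a transcendence basis over $\bar k \cap K$'' does not move $\alpha$ when that basis is a singleton; instead extend a nontrivial $\bar k$-automorphism of $\bar k(\alpha)$, e.g.\ $\alpha \mapsto \alpha + 1$, to an automorphism of $K$. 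Your remark that the hypotheses implicitly force $Z$ to be reduced is also correct and worth noting, since the paper's proof glosses over it.
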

\begin{proof} 
Let $G={\rm Aut}(K/k)$, $B=k(Y)$, $Z= \Spec ( (B \otimes_k K) / I)$. The $G$-stability of
$Z(K)\subset Y(K)$ and  Proposition~\ref{HNS} imply that $I\subset B \otimes_k
K $ is stable under $G$. Then \cite[Sec.~V.10.4]{BourbakiAlg24} implies that $I_0=I^G
\subset B$ satisfies $I_0 \otimes_k K =I$. Set $Z_0= \Spec B/ I_0$.


\end{proof}

\begin{prop} \label{points}
 Let  $k$ be an algebraically closed field, 
 let
$\varphi: Z \to Y$ be an integral $k$-morphism of affine schemes, with $Z$ of
finite type over $k$. Then there is a uniquely defined reduced closed subscheme $X\hookrightarrow
Y$ of finite type over $k$
such that
$$ \varphi(Z(k))=X(k).$$
\end{prop}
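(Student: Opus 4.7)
The plan is to take $X$ to be the scheme-theoretic image of $\varphi$ endowed with its reduced structure, and then to check two things: (a) that $X$ is of finite type over $k$, and (b) that $X(k) = \varphi(Z(k))$. Uniqueness of such an $X$ will follow immediately from Proposition~\ref{HNS}.

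Setting $Y = \Spec A$ and $Z = \Spec B$, let $A' = \varphi^\ast(A) \subset B$; by hypothesis $B$ is a finitely generated $k$-algebra and $B$ is integral over $A'$. My first step would be to show that $A'$ is itself a finitely generated $k$-algebra, even though $A$ is allowed to be arbitrary. For this I would pick $k$-algebra generators $x_1, \ldots, x_n$ of $B$, write down monic integral relations for each $x_i$ with coefficients in $A'$, and let $A'' \subset A'$ be the $k$-subalgebra generated by the finitely many coefficients that appear; then $B$ is finite as an $A''$-module, and since $A''$ is noetherian, its $A''$-submodule $A'$ is finite over $A''$ as well, hence a finitely generated $k$-algebra. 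In particular $\Spec A' \hookrightarrow Y$ is a closed immersion of an affine $k$-scheme of finite type.

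The next step is to verify that $\varphi(Z(k)) = (\Spec A')(k)$ at the level of closed points. A $k$-point of $\Spec A'$ corresponds, via the Nullstellensatz, to a maximal ideal $\mathfrak{m} \subset A'$; the lying-over theorem for the integral extension $A' \subset B$ furnishes a prime $\mathfrak{n} \subset B$ with $\mathfrak{n} \cap A' = \mathfrak{m}$, which is automatically maximal since integral extensions preserve maximality, and a second application of the Nullstellensatz produces a $k$-point of $Z$ lying above the given $k$-point of $\Spec A'$. The reverse inclusion is tautological.

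Taking $X = (\Spec A')_{\mathrm{red}}$ then provides the desired reduced closed subscheme of $Y$ of finite type over $k$ with $X(k) = \varphi(Z(k))$, and Proposition~\ref{HNS} takes care of uniqueness. The only mildly delicate point is the finite generation of $A'$ as a $k$-algebra, which is where the integrality hypothesis on $\varphi$ is truly used; everything else is standard commutative algebra applied to the closed immersion $\Spec A' \hookrightarrow Y$.
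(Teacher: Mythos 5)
Your proof is correct and takes essentially the same approach as the paper's: both identify $X$ with the (reduced) scheme-theoretic image of $\varphi$ and establish that it is of finite type by an Artin--Tate style argument, picking out a finite-type $k$-subalgebra $A''$ of the image over which $B$ is module-finite and invoking noetherianness of $A''$, with the point-count $\varphi(Z(k)) = X(k)$ following from lying-over plus the Nullstellensatz. The paper packages the same mechanism in the language of a filtered inverse system of finite-type models $Y_\alpha$ of $Y$ whose reduced images in $Z$ stabilize by the ascending chain condition, but the underlying idea is identical to yours, and your phrasing is the more direct of the two.
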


\begin{proof}
Write $Y=\Spec B, \ Z=\Spec C$, for commutative $k$-algebras $B$, $C$ with $C$ of
finite type over $k$. Without loss of generality assume that $B$ and $C$ are reduced. There are finitely many elements of $C$ which span $C$ as a
$k$-algebra. They are integral over $B$. This defines finitely many
minimal polynomials, thus finitely many coefficients of those polynomials in
$B$. Thus there is an affine $k$-algebra of finite type $B_0\subset B$
containing them all. It follows that $C$ is finite over $B_0$. Choose a filtered inverse system 
$Y_\alpha=\Spec B_\alpha$ of affine $k$-schemes of finite type, such that $B_\alpha
\subset B$ and
$$Y=\Spec B= \varprojlim_{\alpha} Y_\alpha.$$ 
The morphisms $\varphi_\alpha: Z\xrightarrow{\varphi} Y\to Y_\alpha$ are all
finite. Let $X_\alpha=\Spec C_\alpha  \hookrightarrow Y_\alpha$ be the (reduced) image of
$\varphi_\alpha$. We obtain finite ring extensions $C_\alpha \subset C$. By Noether's
basis theorem the filtered direct system $C_\alpha $ stabilizes at some $\alpha_0$. Then
\[
X = \Spec C_{\alpha_0} = \varprojlim_\alpha \Spec C_{\alpha} \hookrightarrow Y
\] 
is of finite type over $k$ and satisfies  $\varphi(Z(k))=X(k)$.
\end{proof}

\begin{prop}\label{proinj}
 Let  $k$ be an algebraically closed field of characteristic $0$, let $Y$ be an affine
$k$-scheme, such that $Y=\Spec B= \varprojlim_{n} Y_n, \ n\in \N$ 
is the  projective limit of reduced affine schemes $Y_n$ of finite type. If the transition morphisms induce bijections 
$Y_{n+1}(k)\xrightarrow{\cong} Y_n(k)$ on closed points, then there is a $n_0\in \N$ such that $Y_n\to Y_{n_0}$ is an isomorphism for all $n\ge n_0$. In particular,  
$Y\to Y_{n_0}$ is an isomorphism as well.
\end{prop}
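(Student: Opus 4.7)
The plan is to realize the $B_n$ as an ascending chain of $B_0$-submodules inside a common finitely generated $B_0$-module (the coordinate ring of a common normalization), and then invoke the ascending chain condition.

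First, I would show each transition $u_n\colon Y_{n+1}\to Y_n$ is dominant and quasi-finite. Dominance: the scheme-theoretic image of $u_n$ contains every closed point of the reduced scheme $Y_n$, hence equals $Y_n$ by the Nullstellensatz, giving $B_n\hookrightarrow B_{n+1}$. Quasi-finiteness: the fiber over a closed point is a single closed point by injectivity, and a positive-dimensional fiber in the finite-type $k$-scheme $Y_{n+1}$ would contain infinitely many closed points all mapping to the same point, contradicting injectivity on $k$-points.

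Second, using $\mathrm{char}(k)=0$ and generic smoothness, $u_n$ is generically \'etale over the components of $Y_n$ it dominates; the $k$-point bijection over a suitable dense open then forces the generic degree to be one, so the irreducible components of $Y_{n+1}$ biject with those of $Y_n$ via birational restrictions of $u_n$. This birational identification produces canonical isomorphisms $\tilde Y_n\simeq \tilde Y_0=:\tilde Y$ of normalizations, together with compatible finite morphisms $\tilde Y\to Y_n$. Since $Y_0$ is a reduced finite-type $k$-algebra and $k$ is of characteristic zero, the normalization $\tilde Y\to Y_0$ is finite, so $\mathcal{O}(\tilde Y)$ is a finitely generated, hence Noetherian, $B_0$-module.

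Third, the inclusions $B_0\subseteq B_1\subseteq B_2\subseteq\cdots\subseteq\mathcal{O}(\tilde Y)$ form an ascending chain of $B_0$-submodules of a Noetherian $B_0$-module and therefore stabilize: there exists $n_0$ with $B_n=B_{n_0}$ for all $n\ge n_0$. Consequently each $Y_n\to Y_{n_0}$ is an isomorphism, and $Y=\Spec\varinjlim_n B_n=\Spec B_{n_0}=Y_{n_0}$, proving the ``in particular'' claim.

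The main obstacle lies in the second step: one must exclude ``extra'' irreducible components of $Y_{n+1}$ whose image is a proper closed subset of a component of $Y_n$ (these are the components that do not dominate anything in $Y_n$). Such components would simultaneously disrupt the birational bijection of components and the identification $\tilde Y_n\simeq\tilde Y_{n+1}$ of normalizations, and the global bijection on $k$-points allows them in principle since they can cover exactly those closed points missed by the dominant components. Ruling them out requires extracting from the $k$-point bijection a closedness property of the transition maps, which is immediate in the paper's intended application in Section~\ref{sec.modhigh} because the transitions there are finite; establishing the analogous closedness from the bare hypotheses of the statement is where the real work of the proof must be done.
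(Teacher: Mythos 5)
Your overall strategy is in essence the paper's: trap an ascending chain of coordinate rings between $B_0$ and its normalization, which is module-finite over $B_0$ in characteristic $0$, and conclude by Noetherianity. But the proposal has a gap beyond the one you flag. Even granting the birational bijection of irreducible components from your second step, the containments $B_0\subseteq B_1\subseteq\cdots\subseteq\mathcal{O}(\widetilde{Y})$, equivalently the ``compatible finite morphisms $\widetilde{Y}\to Y_n$'', need not exist: $B_n$ need not be integral over $B_0$ when the transitions are merely quasi-finite. Take $Y_0=\Spec k[x,y]/(y^2-x^2(x+1))$ (a nodal curve) and $Y_1\cong\mathbb{A}^1\setminus\{1\}$ the normalization with one of the two points over the node deleted (complete the tower by identity maps); then $Y_1\to Y_0$ is quasi-finite, dominant, birational and bijective on $k$-points, but $\mathcal{O}(Y_1)\not\subseteq\mathcal{O}(\widetilde{Y}_0)=k[t]$, the normalizations $\widetilde{Y}_1\cong Y_1$ and $\widetilde{Y}_0\cong\mathbb{A}^1$ are not isomorphic over $Y_0$, and there is no morphism $\widetilde{Y}_0\to Y_1$ over $Y_0$ at all. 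This is precisely why the paper's proof passes through Zariski's Main Theorem: it extends the transitions to finite morphisms $\bar{Y}_{n+1}\to\bar{Y}_n$ between finite-type schemes containing the $Y_n$ as open subschemes, stabilizes that auxiliary tower by the normalization argument, and then recovers the statement for the $Y_n$ because for large $n$ the map $Y_{n+1}\to Y_n$ becomes an open immersion, bijective on points between reduced schemes, hence an isomorphism. Your plan omits this reduction, and the containment it relies on is false in general.

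As for the obstacle you do flag (components of $Y_{n+1}$ landing in proper closed subsets of $Y_n$): you are right that it is essential, and in fact it cannot be removed using only the stated hypotheses, because the literal statement fails without some closedness of the transitions. Starting from $Y_0=\mathbb{A}^1$, let $Y_{n+1}$ be obtained from $Y_n$ by deleting one more rational point from the one-dimensional component and re-inserting it as an isolated reduced point mapping to it; every transition is then bijective on $k$-points and all $Y_n$ are reduced, affine and of finite type, yet no transition is an isomorphism. (The paper's own step ``the assumption implies that the morphisms are birational on every irreducible component'' silently uses that every component of $Y_{n+1}$ dominates a component of $Y_n$, which this example violates.) So the proposition has to be read with the finiteness of the transition morphisms available in its application in Section~\ref{sec.modhigh}, as in the remark following the proposition; with finite transitions both your flagged issue and the integrality issue above disappear (every component then dominates a component, the composites $Y_n\to Y_0$ are finite, and $B_n$ does sit inside the normalization of $B_0$), and your chain argument becomes exactly the simplified ZMT-free proof the paper alludes to. As written, however, your proposal neither supplies the ZMT step nor invokes finiteness, so it does not yet prove the statement.
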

\begin{proof}
Applying Zariski's Main Theorem \cite[ Thm.4.4.3]{EGAIII}, one constructs inductively affine schemes of finite type 
$\bar{ Y}_n, \  \bar{ Y}_0=Y_0$, together 
with an open embedding $ Y_n \hookrightarrow \bar{Y}_n$, such that the transition morphisms $Y_{n+1}\to Y_n$ extend to finite transition morphisms
$\bar{ Y}_{n+1}\to \bar {Y}_n$. On the other hand, the assumption implies that the morphisms $Y_{n+1}\to Y_n$ are birational on every 
irreducible component. So the same property holds true for $\bar{Y}_{n+1}\to \bar{Y}_n$. 
 One thus has a factorization $\tilde{Y}_0\to \bar{Y}_n\to Y_0$ for all $n$, where $\tilde{Y}_0 \to Y_0$ the normalization morphism. Since 
$\tilde{Y}_0$ is of finite type, there is a $n_0$ such that $\bar{Y}_n\to \bar{Y}_{n_0}$ is an isomorphism for all $n\ge n_0$.  Thus the composite 
morphism $Y_n\to Y_{n_0}\to \bar{Y}_{n_0}$  is an open embedding  for all $n\ge n_0$, and thus $Y_{n+1}\to Y_n$ is an open embedding as well. 
 Since it induces a bijection on points, and the $Y_n$ are reduced, the transition morphisms $Y_{n+1}\to Y_n$ are isomorphisms for $n\ge n_0$.
\end{proof}
\begin{rmk}
If in Proposition~\ref{proinj}, one assumes in addition that the transition morphisms $Y_{n+1}\to Y_n$ are finite, then one does not need Zariski's Main Theorem to conclude.
\end{rmk}
\section{} \label{app.B}

\noindent
In the proof of Corollary \ref{corodetweight} we claim the existence of a curve
with certain properties. The Bertini argument
given in \cite[p.\ 201]{Laf} for the construction of such a curve is, as such,
not correct. We give a complete proof here relying on Hilbert
irreducibility instead of Bertini.

 Let $X$ be in $\Sm_{\F_q}$.

\begin{prop}\label{propcurveex}
For $V\in \dR_r(X)$ irreducible and a closed point $x\in X$, there is an irreducible smooth curve $C/ \F_q $ and
a morphism $\psi: C \to X$ such that
\begin{itemize}
\item $\psi^*(V)$ is irreducible,
\item $x$ is in the image of $\psi$.
\end{itemize}
\end{prop}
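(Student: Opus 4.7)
The strategy is to twist $V$ to make it \'etale, reduce to an \'etale coordinate patch around $x$, and apply Hilbert irreducibility to a family of curves in that patch passing through $x$.

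By Proposition~\ref{weil.etale.gen} I may replace $V$ by a twist $\chi \cdot V$ with $\chi \in \dR_1(\F_q)$ and assume $V$ is \'etale; neither the hypothesis nor the conclusion is affected. Then $V$ corresponds to a continuous representation $\rho : \pi_1(X, \bar x) \to \GL(V)$ with image $G$, and I let $\mathbf{G} \subset \GL(V)$ be the Zariski closure of $G$. By Zariski density $V$ is irreducible also as a $\mathbf{G}$-module. For $\psi^* V$ to be irreducible it therefore suffices that the Zariski closure of $\rho(\pi_1(C))$ still equal $\mathbf{G}$; equivalently, that it contain the identity component $\mathbf{G}^\circ$ and that $\rho(\pi_1(C))$ surject onto the finite group $F := G/(G \cap \mathbf{G}^\circ(\Ql))$. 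Let $X_1 \to X$ be the finite \'etale Galois cover with group $F$ corresponding to this quotient.

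Since $X$ is smooth at $x$, I shrink $X$ to a Zariski open neighborhood $U$ of $x$ admitting a finite \'etale morphism $\eta : U \to W$ with $W \subset \A^d_{\F_q}$ open and $d = \dim X$; this is obtained by extending a regular system of parameters at $x$. Set $U_1 := X_1 \times_X U$ and $y := \eta(x) \in W$. Parameterize affine lines in $\A^d$ through $y$ by a non-empty open $T \subset \P^{d-1}$ over the residue field $k(y)$. For a Zariski dense open set of $t \in T$, the line $L_t$ should satisfy: (a)~the pullback $L_t \times_W U_1$ is irreducible, which is Hilbert irreducibility for $\A^d$ applied to the finite cover $U_1 \to W$ over the Hilbertian field $\F_q(T)$; and (b)~the Zariski closure of the monodromy of $V$ on $L_t \times_W U$ contains $\mathbf{G}^\circ$, which follows from a Bertini-type argument for irreducible representations, using that $T$ is positive-dimensional. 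Both ``bad'' loci are thin subsets of $T$ and are therefore avoided by infinitely many $\F_{q^n}$-rational points of $T$ for $n$ sufficiently large. Picking such a rational point $t$, the normalization $C$ of the component of $L_t \times_W U$ passing through a preimage of $x$ gives the desired smooth irreducible curve, with $\psi : C \to X$ the composition $C \to U \hookrightarrow X$.

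The main obstacle is condition (b)---that the Zariski closure of $\rho(\pi_1(C))$ contains the identity component $\mathbf{G}^\circ$. Over an infinite base field this would follow from a direct Bertini-type theorem applied to the generic line, but since $\F_q$ is finite one must work with a positive-dimensional parameter space $T$ and appeal to Hilbert irreducibility for the Hilbertian field $\F_q(T)$. Handling (b) correctly and simultaneously with (a), while also arranging that $C$ pass through $x$, is precisely the difficulty this appendix resolves.
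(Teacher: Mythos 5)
Your reduction to the étale case, the use of a finite étale map $\eta: U \to W \subset \A^d$, and the appeal to Hilbert irreducibility over $\F_q(T)$ are all in the spirit of the paper's argument. However, your decomposition into conditions (a) and (b) is where your approach diverges from the paper's, and (b) is a genuine gap. You want the Zariski closure of $\rho(\pi_1(C))$ to contain the identity component $\mathbf{G}^\circ$, and you assert this ``follows from a Bertini-type argument'' and that the bad locus is thin. But $\mathbf{G}^\circ$ is not finite, so the condition is not a priori controlled by a single finite étale cover, and hence it is not obviously a thin condition to which Hilbert irreducibility applies. This is precisely the point the appendix flags: the authors explicitly state that Lafforgue's original Bertini argument for this step is ``as such, not correct,'' and the whole purpose of their proof is to replace the Bertini input by something that Hilbert irreducibility can genuinely handle. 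Your condition (b), as stated, reproduces the very gap the appendix was written to close.

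The paper's key idea, which you are missing, is the Lemma preceding the proof: one can construct a \emph{single} connected finite étale cover $X' \to X$ with the property that for any smooth curve $\psi : C \to X$, irreducibility of $C \times_X X'$ already forces $\psi^*(V)$ to be irreducible. This reduces the entire problem to one finite statement, which Hilbert irreducibility then handles cleanly. The construction of $X'$ is group-theoretic: pick an integral model $\rho : \pi_1(X) \to \GL_r(R)$, let $H_1$ be the kernel of the residual representation, and let $H_2 = \bigcap_{\nu : H_1 \to \Z/\ell} \ker(\nu)$. The point is that the image of $H_1$ in $G = \im(\rho)$ is a pro-$\ell$ group, hence pro-nilpotent, so surjectivity of $K \to \pi_1(X)/H_2$ (a Frattini-type quotient) implies surjectivity of $K \to G$. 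Openness of $H_2$ uses finiteness of $H^1_{\text{\'et}}(X_{H_1}, \Z/\ell)$ from SGA $4\tfrac{1}{2}$. Without this reduction, your (b) has no proof; with it, your (b) is unnecessary. You should replace the Zariski-closure decomposition and the Bertini appeal by this lemma.
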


\smallskip

\begin{lem}
For an irreducible $\bar \Q_\ell$-\'etale sheaf $V$ on $X$
there is a connected \'etale covering  $X' \to X$ with the following property: \\
For a smooth irreducible curve $C/\F_q$ and a morphism
$\psi:C\to X$ the implication 
\[
 C \times_X X' \text{ irreducible }  \;\; \Longrightarrow \;\; \psi^*(V) \text{ irreducible }
\]
holds.
\end{lem}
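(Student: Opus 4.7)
The strategy is to encode the absolute irreducibility of the representation attached to $V$ by a single $\ell$-adically open algebraic condition (via Burnside's theorem) and then to find an open normal subgroup $N\triangleleft\pi_1(X)$ witnessing that condition. The desired cover $X'\to X$ will be the connected Galois cover corresponding to $N$.

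More precisely, let $\rho:\pi_1(X)\to \GL_r(\bar\Q_\ell)$ be the continuous representation corresponding to the \'etale sheaf $V$. By the definition of continuity in the paper, $\rho$ factors through $\GL_r(E)$ for some finite extension $E/\Q_\ell$, and its image lies in a compact subgroup. Absolute irreducibility of $V$ implies absolute irreducibility of the $E$-representation, so Burnside's theorem applied over $E$ gives that the $E$-linear span of $\rho(\pi_1(X))$ inside the matrix algebra $M_r(E)$ is all of $M_r(E)$. I therefore pick $g_1,\ldots,g_{r^2}\in\pi_1(X)$ such that $\rho(g_1),\ldots,\rho(g_{r^2})$ form an $E$-basis of $M_r(E)$. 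Being a basis is the non-vanishing of a single determinant in the entries, hence an $\ell$-adically open condition on $M_r(E)^{r^2}$.

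By continuity of $\rho$, the preceding openness yields an open normal subgroup $N\triangleleft\pi_1(X)$ with the property that for any choice of representatives $h_i\in g_iN$ the matrices $\rho(h_1),\ldots,\rho(h_{r^2})$ still form an $E$-basis of $M_r(E)$. I take $X'\to X$ to be the connected Galois \'etale cover associated to $N$. Now let $\psi:C\to X$ satisfy the hypothesis that $C\times_X X'$ is irreducible. Since $X'/X$ is \'etale and $C$ is smooth, the fibre product $C\times_X X'$ is smooth, so irreducible is equivalent to connected. By the Galois dictionary for \'etale fundamental groups, the connected components of $C\times_X X'$ correspond to orbits of $\psi_*(\pi_1(C))$ acting by translation on the fibre $\pi_1(X)/N$ of $X'\to X$, and connectedness amounts to $\psi_*(\pi_1(C))\cdot N=\pi_1(X)$. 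I then pick $h_i\in\pi_1(C)$ with $\psi_*(h_i)\in g_iN$. By construction of $N$ the elements $\rho(\psi_*(h_i))$ form an $E$-basis of $M_r(E)$, and Burnside's theorem applied in the converse direction shows that $\rho\circ\psi_*$ is absolutely irreducible, i.e.\ $\psi^*(V)$ is irreducible, as desired.

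The only mildly delicate step is the continuity argument producing $N$: it relies on the fact that the $\ell$-adic topology on $\GL_r(E)$ is fine enough to detect the open algebraic condition of being a basis of $M_r(E)$. Beyond that, the proof is a purely formal combination of Burnside's theorem with the standard Galois dictionary for the \'etale fundamental group, and there is no substantive obstacle.
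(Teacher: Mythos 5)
Your proof is correct, but it takes a genuinely different route from the paper's. The paper argues group-theoretically: after choosing an $\sO_E$-lattice and reducing $\rho$ modulo the maximal ideal, it sets $H_1 = \ker(\pi_1(X)\to \GL_r(R/\mathfrak m))$ and $H_2 = \bigcap_{\nu\in\Hom(H_1,\Z/\ell)}\ker(\nu)$, deducing that $H_2$ is \emph{open} in $\pi_1(X)$ from the finiteness of $H^1_{\acute{\rm e}t}(X_{H_1},\Z/\ell)$ via \cite[Th.~Finitude]{SGA412}. It then invokes a Frattini-type argument for the pro-$\ell$ (hence pro-nilpotent) group $\rho(H_1)$ to show that any profinite $K\to\pi_1(X)$ which surjects onto $\pi_1(X)/H_2$ already surjects onto the full monodromy group $G=\rho(\pi_1(X))$; applied to $\psi_*(\pi_1(C))$ this yields the stronger conclusion that $\psi^*(V)$ has the \emph{same} monodromy group as $V$, not merely that it is irreducible. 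Your proof instead translates absolute irreducibility via Burnside's theorem into the open linear-algebraic condition that some $r^2$ image matrices form an $E$-basis of $M_r(E)$, and then produces the open normal $N$ purely from continuity of $\rho$ and profiniteness of $\pi_1(X)$. This is more elementary in that it sidesteps both the finiteness theorem for \'etale cohomology and the pro-nilpotent Frattini argument; the trade-off is that the conclusion is weaker (irreducibility, not equality of images), but that is exactly what the lemma asserts and what the proof of Proposition~\ref{propcurveex} actually uses.
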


\begin{proof}
Choose a finite normal extension $R$ of $\Z_\ell$ with maximal ideal $\mathfrak m\subset R$ such that $V$ is induced by a continuous representation
$$\rho:\pi_1(X) \to GL(R,r) .$$ Let $H_1$ be the kernel of $\pi_1(X) \to GL(R/\mathfrak m,r)$ and let $G$ be the image of $\rho$. The  subgroup
\[
H_2 = \bigcap_{\nu \in{\rm Hom}(H_1 , \Z /\ell )} \ker (\nu) 
\]
is open normal in $\pi_1(X)$ according to \cite[Th.\ Finitude]{SGA412}. Indeed
observe that $H_1/H_2 = H_1^{{\rm ab}}/\ell$ is Pontryagin dual to
$H^1_{{\rm \acute{e}t}}(X_{H_1},\Z / \ell)$,
where $X_{H_1}$ is the \'etale covering of $X$ associated to $H_1$.
Since the image of $H_1$ in $G$ is pro-$\ell$, and therefore pro-nilpotent, any
morphism of pro-finite groups $K\to \pi_1(X)$ satisfies:
\[
(K\to \pi_1(X) /H_2 \;\; \text{ surjective }) \Longrightarrow \;\; (K\to G \text{ surjective }).
\]
(Use \cite[Cor.\ I.6.3.4]{BourbakiAlg24}.)

Finally, let $X'\to X$ be the Galois covering corresponding to $H_2$.
\end{proof}

\begin{proof}[Proof of Proposition \ref{propcurveex}]
We can assume that $X$ is affine. By Proposition~\ref{weil.etale.gen} we can, after some
twist, assume that $V$ is \'etale.  Let $X'$ be as in the lemma. By Noether
normalization, e.g.\ \cite[Corollary~16.18]{Eis}, there is a finite generically
\'etale morphism $$f:X\to \A^d.$$
Let $U\subset \A^d$ be an open dense subscheme such that $f^{-1}(U) \to U$ is finite \'etale.
Let $y\in \A^d$ be the image of $x$.   Choose  a linear projection
$\pi: \A^d \to \A^1$ and set $z=\pi(y)$ and consider the map $ h:U\to \A^1$. By
definition, $U_{k(\A^1)}\subset \A^{d-1}_{k(\A^1)}$.  

 Let  $F=k(\Gamma) \supset k(\A^1)$ be a finite extension
such that $X'\otimes_{k(\A^1)} F$ is irreducible and the smooth curve $\Gamma\to
\A^1$ contains a closed point $z'$ with $k(z')=k(y)$. 

 It is easy to see
that there is an $\hat F$-point in $U_{k(\A^1)}$ which specializes to $y$. By
Hilbert irreducibility, see \cite[Cor.\ A.2]{Drinfeld}, we find an $F$-point
$u\in U_{k(\A^1)}$ which specializes to $y$ and such that $u$ does not split
in $X'\times_{\A^1} \Gamma$.

Let $v\in X$ be the unique point over $u$. By the going-down theorem \cite[Thm.\ V.2.4.3]{Bourbakicomalg} the closure $\overline{\{ v\}}$ contains $x$. Finally, we let $C$ be the normalization of $\overline{\{ v\}}$.
\end{proof}

\bibliographystyle{plain}
\renewcommand\refname{References}

\end{document}